\theoremstyle{plain}
\newtheorem{theorem}{Theorem}[section]
\newtheorem{corollary}[theorem]{Corollary}
\newtheorem{lemma}[theorem]{Lemma}
\newtheorem{proposition}[theorem]{Proposition}
\theoremstyle{definition}
\newtheorem{definition}[theorem]{Definition}
\newtheorem{example}[theorem]{Example}
\theoremstyle{remark}
\newtheorem{remark}{Remark}
\newtheorem*{notation}{Notation}
\numberwithin{equation}{section}
\numberwithin{figure}{section}
\DeclareMathOperator{\ch}{\textnormal{ch}}
\DeclareMathOperator{\supp}{\text{supp}}
\newcommand{\g}{\ensuremath{\mathfrak{g}}\xspace}
\newcommand{\h}{\ensuremath{\mathfrak{h}}\xspace}
\newcommand{\W}{\ensuremath{\mathscr{W}}\xspace}
\newcommand{\lw}{\ensuremath{\ell(w)}\xspace}
\newcommand{\ulog}{\ensuremath{-\log U_i(\lambda)}\xspace}
\begin{document}

\title{On tensor products of representations of Lie superalgebras}

\author[A. Das]{Abhishek Das}
\address[A. Das]{Department of mathematics \& statistics\\ indian institute of technology, kanpur, u.p 208016.}
\email{abhidas20@iitk.ac.in}

\author[S.K. Pattanayak]{Santosha Pattanayak}
\address[S.K. Pattanayak]{Department of mathematics \& statistics, indian institute of technology, kanpur, u.p 208016\\ FB 577.}
\email{santosha@iitk.ac.in}

\subjclass[2020]{17B05, 17B10, 17B65}
\keywords{Lie superalgebras, typical representations, singly atypical weights, tensor products.}
\date{\today}
    \begin{abstract}
         We consider typical finite dimensional complex irreducible representations of a basic classical simple Lie superalgebra, and give a sufficient condition on when unique factorization of finite tensor products of such representations hold. We also prove unique factorization of tensor products of singly atypical finite dimensional irreducible modules for $\mathfrak{sl}(m+1,n+1)$, $\mathfrak{osp}(2,2n)$, $G(3)$ and $F(4)$ under an additional assumption. This result is a Lie superalgebra analogue of Rajan's fundamental result ~\cite{MR2123935} on unique factorization of tensor products for finite dimensional complex simple Lie algebras. 
    \end{abstract}
\maketitle

\section{Introduction}\label{secintro}
	\noindent In ~\cite{MR2123935}, Rajan proved that for a complex simple Lie algebra \g, the following is true:
	\begin{theorem}
		Let \(V_1,\dots,V_m\) and \(W_1,\dots,W_n\) be non trivial finite dimensional irreducible representations of \g such that
		\begin{displaymath}
			V_1\otimes\dots\otimes V_m\cong W_1\otimes\dots\otimes W_n.
		\end{displaymath}
		Then \(m=n\), and there is a permutation \(\sigma\) of \(\{1,\dots,n\}\) such that \(V_k\cong W_{\sigma(k)}\) as \g-modules for every \(k\in\{1,\dots,n\}\).
	\end{theorem}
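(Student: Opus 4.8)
The plan is to pass to characters. Since only isomorphism classes of $\mathfrak{g}$-modules are involved, I may assume $\mathfrak{g}$ is the Lie algebra of a simply connected complex group $G$ and work in the representation ring $R=\mathbb{Z}[P]^{W}=\mathbb{Z}[\chi_{\omega_1},\dots,\chi_{\omega_r}]$, where $P$ is the weight lattice, $W$ the Weyl group, $r$ the rank, $\chi_\lambda=\ch V_\lambda$ for $\lambda\in P^{+}$, and $\ch$ carries $\otimes$ to multiplication. The hypothesis becomes $\prod_{k=1}^{m}\chi_{\lambda_k}=\prod_{l=1}^{n}\chi_{\mu_l}$ in the integral domain $R$, all $\lambda_k,\mu_l\in P^{+}\setminus\{0\}$, and it suffices to prove $\{\lambda_k\}=\{\mu_l\}$ as multisets, which then forces $m=n$. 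I would first note that the $\chi_\lambda$ with $\lambda\neq0$ are exactly the atoms of the multiplicative submonoid of $R$ they generate: $V_\lambda\otimes V_\mu$ contains both its Cartan component $V_{\lambda+\mu}$ and its Parthasarathy--Ranga Rao--Varadarajan component $V_{\overline{\lambda+w_0\mu}}$, and these differ as soon as $\mu\neq0$, so $\chi_\lambda\chi_\mu$ is reducible whenever $\lambda,\mu\neq0$. Thus the theorem is exactly the freeness of that monoid on $\{\chi_\lambda:\lambda\in P^{+}\setminus\{0\}\}$.

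The first tool is a leading-term argument. Fix a generic regular dominant $h\in\mathfrak{h}_{\mathbb{R}}$, so that $\nu\mapsto\langle\nu,h\rangle$ is injective on the finite set of weights that occur; the associated term order on $\mathbb{Z}[P]$ has multiplicative leading terms (as $\mathbb{Z}[P]$ is a domain), and that of $\chi_\lambda$ is $e^{\lambda}$, $\lambda$ being the multiplicity-one highest weight of $V_\lambda$. Comparing leading terms gives $\sum_k\lambda_k=\sum_l\mu_l=:\Lambda$, and varying $h$ shows the two weight polytopes agree. The second tool brings in all weight multiplicities: apply the principal specialization $e^{\nu}\mapsto t^{\langle\nu,\rho^{\vee}\rangle}$, with $\rho^{\vee}$ the sum of the fundamental coweights (rescaling $t$ to clear denominators). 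By Macdonald's principal specialization of the Weyl character formula, $\chi_\lambda$ goes, up to a unit of $\mathbb{Z}[t^{\pm1}]$, to the quantum dimension $\prod_{\alpha\in\Delta^{+}}\frac{t^{\langle\lambda+\rho,\alpha^{\vee}\rangle}-1}{t^{\langle\rho,\alpha^{\vee}\rangle}-1}$. Clearing denominators, comparing the multiplicity of each cyclotomic polynomial $\Phi_d$ in the resulting identity in the UFD $\mathbb{Z}[t]$, and M\"obius inverting over the divisibility poset yields, for every $d\ge1$,
\[
\sum_{k}\#\{\alpha\in\Delta^{+}:\langle\lambda_k+\rho,\alpha^{\vee}\rangle=d\}\;-\;\sum_{l}\#\{\alpha\in\Delta^{+}:\langle\mu_l+\rho,\alpha^{\vee}\rangle=d\}\;=\;(m-n)\,\#\{\alpha\in\Delta^{+}:\langle\rho,\alpha^{\vee}\rangle=d\}.
\]
In rank one this already proves the theorem: $\chi_{d\omega_1}$ specializes to a monomial times $\prod_{e\mid 2(d+1),\,e>2}\Phi_e(t)$, whose largest cyclotomic index is $2(d+1)$, so matching largest indices recovers $\max_k d_k$ with its multiplicity, and one cancels those factors and iterates.

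These constraints do not by themselves recover which coroot values $\langle\lambda_k+\rho,\alpha^{\vee}\rangle$ belong to which tensor factor; for that I would induct on the rank of $\mathfrak{g}$, the rank-one case being the computation just given. For a node $i$, let $\mathfrak{p}_i=\mathfrak{l}_i\oplus\mathfrak{n}_i$ be the maximal parabolic deleting $i$, with Levi $\mathfrak{l}_i=\mathfrak{g}'_i\oplus\mathbb{C}\,\omega_i^{\vee}$, $\mathfrak{g}'_i$ semisimple of rank $r-1$, and for a $\mathfrak{g}$-module $V$ let $V^{+}$ be the top eigenspace of $\omega_i^{\vee}$ acting on $V$. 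Then $V_\lambda^{+}$ is the irreducible $\mathfrak{l}_i$-module $V^{\mathfrak{l}_i}_\lambda$, and since $\omega_i^{\vee}$ spans a torus, $(V\otimes V')^{+}=V^{+}\otimes V'^{+}$; hence $\bigotimes_k V^{\mathfrak{l}_i}_{\lambda_k}\cong\bigotimes_l V^{\mathfrak{l}_i}_{\mu_l}$. Restricting further to $\mathfrak{g}'_i$ and applying the inductive hypothesis, the multisets of nontrivial $\mathfrak{g}'_i$-restrictions coincide, i.e. after relabelling $\langle\lambda_k,\alpha_j^{\vee}\rangle=\langle\mu_k,\alpha_j^{\vee}\rangle$ for all $j\neq i$, once we set aside the factors that are multiples of $\omega_i$ (these are directly controlled by $\sum_k\lambda_k=\sum_l\mu_l$). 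Combining these leave-one-out coordinate matchings over all nodes $i$ with $\Lambda=\sum_k\lambda_k=\sum_l\mu_l$ and with the cyclotomic constraints above should pin down $\{\lambda_k\}=\{\mu_l\}$, and hence $m=n$.

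I expect the real obstacle to be this final reassembly. A Levi restriction at one node only records coordinate projections, and such projections over varying nodes --- even together with the matching total $\Lambda$ --- need not determine a multiset of dominant weights, since discrete-tomography-type ambiguities occur; the cyclotomic identities coming from the quantum dimension, which couple distinct coordinates through the non-simple coroots, are precisely what is available to break them. This is the step that essentially uses semisimplicity of $\mathfrak{g}$ --- the analogous unique factorization is false over a torus --- and it is where the bulk of the combinatorial work lies.
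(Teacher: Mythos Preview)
Your proposal is not a complete proof, and you yourself flag the gap: the ``reassembly'' step combining the leave-one-out Levi matchings with the cyclotomic constraints is never carried out, only asserted to be where ``the bulk of the combinatorial work lies.'' This is indeed the crux, and it is not a formality. The Levi restriction at node $i$ tells you that, after a relabelling \emph{depending on $i$}, one has $\langle\lambda_k,\alpha_j^\vee\rangle=\langle\mu_k,\alpha_j^\vee\rangle$ for all $j\neq i$; but the relabellings for different nodes need not be compatible, which is exactly the discrete-tomography ambiguity you mention. The principal-specialization identities you extract are very coarse: they only record, summed over all factors and all positive coroots, how often each integer $d$ occurs as $\langle\lambda_k+\rho,\alpha^\vee\rangle$. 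You give no argument that these aggregated counts suffice to break the ambiguity, and there is no evident mechanism by which they would. (There is also a smaller wrinkle: your inductive hypothesis is invoked for $\mathfrak{g}'_i$, which is in general only semisimple, whereas the statement is for simple $\mathfrak{g}$; this can be patched, but it needs to be said.) As it stands the argument does not close.

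The paper does not prove this theorem --- it is quoted as Rajan's result --- but the formal-logarithm method the paper adopts (and develops for superalgebras) specializes to a proof for simple Lie algebras that avoids the reassembly problem altogether. One passes to the normalized Weyl numerator $U(\lambda)=e^{-(\lambda+\rho)}\sum_{w\in W}(-1)^{\ell(w)}e^{w(\lambda+\rho)}$, takes $-\log$, and shows that the monomial $X^\lambda\coloneqq\prod_{\alpha\in\Pi}X_\alpha^{\langle\lambda+\rho,\alpha\rangle}$ (with $X_\alpha=e^{-\alpha}$) is the unique minimal-degree monomial in $-\log U(\lambda)$ whose support is all of $\Pi$, with a nonzero coefficient depending only on the Dynkin graph. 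The tensor identity becomes $\sum_k(-\log U(\lambda_k))=\sum_l(-\log U(\mu_l))$; the minimal full-support monomial on the left must match one on the right, forcing $X^{\lambda_k}=X^{\mu_l}$ for some $k,l$, hence $\lambda_k=\mu_l$, and one cancels and iterates. The point is that the single monomial $X^\lambda$ already encodes \emph{all} simple-coroot coordinates of $\lambda+\rho$ at once, so no tomographic reconstruction from projections is ever needed. Your Levi/quantum-dimension route is genuinely different in spirit, but to make it work you would have to supply precisely the combinatorial step you have left open.
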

	Proof of the above theorem is based on judicious application of the Weyl character formula and proceeding via inductive arguments upon fixing one of the variables. Authors of ~\cite{MR2980495} extended this result for integrable representations of symmetrizable Kac-Moody algebras up to a one dimensional twist as these algebras admit non-trivial one-dimensional representations. Their proof relies on techniques involving formal logarithm, and the main idea being comparison of an appropriate monomial. 
 
 In ~\cite{MR4504111}, for a complex semisimple Lie algebra \g, the authors consider a more general question of determining all the pairs $(V_1,V_2)$ consisting of two finite dimensional irreducible representations of \g such that $\text{Res}_{\g_0}V_1 \cong \text{Res}_{\g_0}V_2$, where $\g_0$ is the fixed point subalgebra of \g with respect to a finite order automorphism. In ~\cite{MR4343717}, the authors gave a sufficient condition for the uniqueness of tensor products when \g is a Borcherds-Kac-Moody algebra.

 Our goal of this paper is to obtain an analogous result when \g is a basic classical simple Lie superalgebra whose theory is most like that of simple Lie algebras. Nevertheless, there are certain subtle differences that have resulted in many aspects of their representation theory only partly explored. For example, any finite dimensional  representation of $\g$ is completely reducible if and only if $\g=\mathfrak{osp}(1,2n)$. Kac showed that there are two disjoint classes of finite-dimensional irreducible representations of any basic classical Lie superalgebra which he gave appellations as typical and atypical. Typical representations share many properties in common with finite-dimensional representations of simple Lie algebras. In particular, they can be built up explicitly by an induced module construction that further allows a straightforward determination of their characters and dimensions. On the other hand, the situation with atypical representations is far more complicated and they are still not well understood. A serious difficulty being unlike the typical case, here an irreducible  representation is not determined by its central character.~The only class of atypical representations that is by so far tractable is the class of singly atypical representations. So in this note we restrict ourselves to the class of finite dimensional typical and singly atypical representations.

 We first consider typical irreducible finite dimensional representations of \g as they admit a Weyl-Kac character formula.  We give a sufficient condition on when unique factorization of finite
tensor products of such representations hold. We follow techniques involving formal logarithms as developed in ~\cite{MR2980495}. The main strategy is to interpret the given tensor products in terms of products of normalized Weyl numerators. For any typical irreducible finite dimensional representation, we show that, the normalized Weyl numerator factors in accordance with the number of connected components of the set of even simple roots of \g and the factorization doesn't depend on the highest weight that defines the representation. We then apply logarithm on both sides and compare an appropriate monomial that carries all crucial informations of the tensor products. Our result (\autoref{Thm:tnsrpdt}) can be described as follows:

 Let \(V(\lambda_1),\dots,V(\lambda_r),V(\mu_1),\dots,V(\mu_s)\) be typical irreducible finite dimensional representations of a basic classical simple Lie superalgebra \g such that the following isomorphism of \g-modules holds,
	\begin{displaymath}
		V(\lambda_1)\otimes\dots\otimes V(\lambda_r)\cong V(\mu_1)\otimes\dots\otimes V(\mu_s).
	\end{displaymath}
	Then \(r=s\), i.e. the number of tensor constituents on both sides is same. We prove that under some additional hypothesis, each constituent in the left hand side of the above isomorphism is \g-module isomorphic to some constituent in the right hand side, i.e. unique factorization of tensor products holds in this case (\textit{see} \autoref{Thm:tnsrpdt} for details). As a corollary, we show that unique factorization of tensor products of irreducible finite dimensional typical representations holds for superalgebras of types \(A(n,0)\), \(B(0,n)\), \(C_n\), \(F(4)\) and \(G(3)\) (cf. \autoref{unique-coro}). To demonstrate the requirement of the hypothesis in \autoref{Thm:tnsrpdt}, we refer to \autoref{example}.
 
    For $\mathfrak{sl}(m+1,n+1)$, $\mathfrak{osp}(2,2n)$, $G(3)$, and $F(4)$, character formulae for singly atypical finite dimensional irreducible modules are known in the literature (\textit{see} \autoref{sec atyp} for details). We also addressed the same question of unique factorization of tensor products of such modules and found its answer in affirmative under an additional assumption on the highest weights. We state our result (\autoref{Thm:atyp}) below:

    Let \g be any Lie superalgebra among $\mathfrak{sl}(m+1,n+1)$, $\mathfrak{osp}(2,2n)$, $G(3)$ and $F(4)$. Suppose we are given the following isomorphism of \g-modules \[V(\nu_1)\otimes\dots\otimes V(\nu_r)\cong V(\mu_1)\otimes\dots\otimes V(\mu_s),\] where each $\nu_i$'s and $\mu_j$'s are dominant integral singly atypical of type $\gamma$. Then $r=s$, and there is a permutation \(\sigma\) of \(\{1,\dots,r\}\) such that \(V(\nu_k)\cong V(\mu_{\sigma(k)})\) for all $k$.
	
	We briefly outline the contents of each section. \autoref{secprelims} contains preliminaries on Lie superalgebras. In \autoref{lemmas}, we prove some preliminary lemmas which are crucial for the proof of the main theorem. \autoref{secfacto} contains results involving formal logarithm and analysis of normalized Weyl numerators. In \autoref{secmainthm} we provide a proof of our main theorem, and finally in \autoref{sec atyp}, we establish unique decomposition of tensor products for singly atypical finite dimensional irreducible modules for the Lie superalgebras $\mathfrak{sl}(m+1,n+1)$, $\mathfrak{osp}(2,2n)$, $G(3)$ and $F(4)$ under an additional assumption on the weights.
    \begin{notation}
        Throughout this paper we work over the field of complex numbers $\mathbb C$. All modules and algebras are defined over $\mathbb C$ and in addition all the modules are of finite dimension. We write $\mathbb Z_2=\{0,1\}$ and use its standard field structure. 
    \end{notation}
 
	\section{Preliminaries}\label{secprelims}
	 In this section we recall few basic definitions and results pertaining to Lie superalgebras and their representations. We mostly follow the notations of ~\cite{MR3012224} and ~\cite{MR2906817}. To begin with, a superalgebra \(A\) is a \(\mathbb{Z}_2\)-graded vector space \(A=A_0\oplus A_1\) together with a bilinear multiplication satisfying \(A_iA_j\subseteq A_{i+j}\), for \(i,j\in\mathbb{Z}_2\). Degree of a homogeneous element, say \(a\), is denoted by \(|a|\). 
 
\subsection{} A Lie superalgebra \(\mathfrak{g}=\mathfrak{g}_0\oplus\mathfrak{g}_1\) is a superalgebra equipped with a bilinear product \([\cdot\,,\cdot]\) (\emph{bracket}) satisfying the following two axioms: for homogeneous elements \(a,b,c\in\mathfrak{g}\),
	\begin{description}
		\item[Skew-supersymmetry] \([a,b]=-(-1)^{|a||b|}[b,a]\).
		\item[Super Jacobi identity] \([a,[b,c]]=[[a,b],c]+(-1)^{|a||b|}[b,[a,c]]\).
	\end{description}
	The direct summands \(\mathfrak{g}_0\) (resp. $\mathfrak{g}_1$) are called \emph{even} (resp. \emph{odd}) part of \(\mathfrak{g}\), and from the definition it follows that, \(\mathfrak{g}_0\) is a Lie algebra and \(\mathfrak{g}_1\) is a \(\mathfrak{g}_0\)-module.

 \subsection{} A natural analogue of the ordinary simple Lie algebras in the super world are the basic classical Lie superalgebras. In particular, they can be described (with the exception of $A(1, 1) = \mathfrak{psl}(2, 2)$) in terms of a Cartan matrix and generalized root systems. They are defined as follows: 

 A Lie superalgebra $\mathfrak{g}=\mathfrak{g}_0 \oplus \mathfrak{g}_1$ is called basic classical if it satisfies the following conditions: 
 \begin{enumerate}[label=\alph*)]
     \item $\mathfrak g$ is simple,
     \item the Lie algebra $\mathfrak g_0$ is a reductive subalgebra of $\mathfrak g$,
     \item there exists a nondegenerate invariant even supersymmetric bilinear form on \g.
\end{enumerate}
Kac  (\textit{see} ~\cite{MR0519631}, Proposition 1.1) proved  that  the  complete  list  of  basic  classical  Lie  superalgebras, which are not Lie algebras, consists of Lie superalgebras of the type $A(m,n)$, $B(m,n)$, $C(n), D(m,n), F(4), G(3), D(2,1,\alpha)$. We note that the even part of a basic classical Lie superalgebra $\mathfrak{g}$ is a reductive Lie algebra.
	
\subsection{} Following ~\cite{MR3012224}, a Cartan subalgebra \h of \g is defined to be a Cartan subalgebra of the even part \(\g_0\) and the Weyl group \W of \g is simply defined to be the Weyl group of the Lie algebra \(\g_0\). We can choose a non-degenerate even invariant supersymmetric bilinear form $(.|.)$ on \g such that its restriction to  $\h\times\h$ is non-degenerate and \W-invariant. We pull back this non-degenerate bilinear form on \h to get a non-degenerate bilinear form $(.,.)$ on $\h^*$.
	
	Let \h be a Cartan subalgebra of \g. For \(\alpha\in\h^*\), let
	\begin{gather*}
		\g_\alpha\coloneqq\{x\in\g\colon[h,x]=\alpha(h)x,\forall h\in\h\}.
		\intertext{The \textbf{root system} for \g is defined to be}
		\Phi\coloneqq\{\alpha\in\h^*\colon\g_\alpha\not=0,\alpha\not=0\}.
		\intertext{Define the sets of even and odd roots, respectively, to be}
		\Phi_0\coloneqq\{\alpha\in\Phi\colon\g_\alpha\cap\g_0\not=0\},\quad\Phi_1\coloneqq\{\alpha\in\Phi\colon\g_\alpha\cap\g_1\not=0\}.
	\end{gather*}
	For a root $\alpha\in\Phi$ we have $k\alpha\in\Phi$ for an integer $k\ne\pm1$ if and only if $\alpha\in\Phi_1$ and $(\alpha,\alpha)\ne0$; in this case $k=\pm2$. A root $\alpha$ is called \emph{isotropic} if $(\alpha,\alpha)=0$. For any root \(\alpha\), the nondegenerate form on \h gives rise to a unique element \(h_\alpha\in\h\), called the \textbf{coroot} corresponding to \(\alpha\), such that \(\alpha(h)=(h_\alpha,h)\) for every \(h\in\h\). Let \(E\) be the real vector space spanned by \(\Phi\). Then \(\h^*=E\otimes_{\mathbb{R}}\mathbb{C}\) for all basic classical simple Lie superalgebras. We fix a total ordering on \enquote*{$\leqslant$} on \(E\) compatible with the real vector space structure: \(v_1\leqslant w_1\) and \(v_2\leqslant w_2\) imply that \(v_1+v_2\leqslant w_1+w_2\), \(-v_2\leqslant -v_1\), and for any positive real number $c$, \(cv_1\leqslant cv_2\) for all \(v_i,w_j\in E\). We fix such a total order and denote by \(\Phi^+\) (resp. \(\Phi^-\)) the subsets of roots \(\alpha\in\Phi\) such that \(0<\alpha\) (resp. $\alpha<0$). \(\Phi^+\) is called a \textbf{positive system}. The corresponding set of positive even roots is denoted by \(\Phi_0^+\). For further details we refer to Section 1.3.1 of ~\cite{MR3012224}.

  Lie superalgebras of Type I contains a one dimensional center, say \(\text{span}\{z\}\), of \(\g_0\). In these cases \(\h=\h_1\oplus\text{span}\{z\}\), where \(\h_1^*\) is the span of even roots. We extend an element \(\lambda\) in the span of even roots to \(\h^*\) by defining \(\lambda(z)\) to be 0.

	\subsection{} Simple roots are defined in the same way as in the Lie algebra case; but here not all simple systems are conjugate under \W action due to presence of odd roots. Any element of \W is a product of simple reflections corresponding to simple even roots. A simple system containing least number of isotropic roots is called \emph{distinguished} or \emph{standard}. Dynkin diagram corresponding to the standard simple system is called standard Dynkin diagram. For each basic classical simple Lie superalgebra, we can choose a distinguished system of simple roots containing only one isotropic root; this is possible for any basic classical Lie superalgebra except $B(0,n)$, which has no isotropic roots (\textit{see} ~\cite{MR0486011}).
 So, for each basic classical simple Lie superalgebra, we fix such a standard simple system \(\Pi\) and let \(\Pi_0\), \(\Pi_1\) denote the set of even (resp. odd) simple roots in \(\Pi\).
	
\subsection{} We have a triangular decomposition 
 $\g = \mathfrak n^- \oplus \h \oplus \mathfrak n^+$, where $\mathfrak n^{\pm}=\oplus_{\alpha \in \Phi^{\pm}} \g_{\alpha}$. Let's recall some basic facts about the representation theory of basic classical simple Lie superalgebras. Let \g be any such Lie superalgebra. For any \(\lambda\in\h^*\), there is an irreducible (unique up to isomorphism) highest weight \g-module of highest weight \(\lambda\). We denote this module by \(V(\lambda)\). The weight \(\lambda\) is called \emph{dominant integral} if $V(\lambda)$ is finite dimensional (\textit{see} ~\cite{MR3751124}*{page 141}). It is well known that any finite dimensional irreducible representation of \g is of the form \(V(\lambda)\) for some dominant integral weight \(\lambda\). Note that if $\lambda \in \h^*$ is dominant integral, then necessarily we have $\smash{\frac{2(\lambda,\alpha)}{\phantom{2}(\alpha,\alpha)}}\in \mathbb Z_{\geqslant 0}$ for all $\alpha \in \Pi_0$ ~\cite{MR0519631}*{Proposition 2.3}; and we denote this integer by \(\langle\lambda,\alpha\rangle\).

 \begin{remark}{\label{rmkdominant}}
    If \g is of Type I, i.e of type \(A(m,n)\) and \(C_n\), then for a weight \(\lambda\in\h^*\), the condition that \(\langle\lambda,\alpha\rangle\in\mathbb{Z}_{\geqslant 0}\) is also sufficient for being dominant integral (\textit{see} ~\cite{MR3751124} page 132). 
 \end{remark}

 \subsection{} The Weyl vector \(\rho\in\h^*\) is defined by:
	\begin{gather}
		\rho\coloneqq\frac{1}{2}\sum_{\alpha\in\Phi^+_0}\alpha-\frac{1}{2}\sum_{\alpha\in\Phi^+_1}\alpha,\notag
	\intertext{and for any positive simple root \(\beta\), it satisfies (\textit{see} ~\cite{MR3012224}, Proposition 1.33)}
		(\rho,\beta)=\tfrac{1}{2}(\beta,\beta).\label{weylvec}
	\end{gather}
	In particular, $(\rho,\beta)=0$ if \(\beta\) is isotropic.
 
 \noindent A weight \(\lambda\in\h^*\) is said to be \textbf{typical} if \((\lambda+\rho,\alpha)\not=0\) for all isotropic roots \(\alpha\in\Phi_1^+\), and it is called \textbf{atypical} otherwise. A representation associated to a typical weight is called a typical representation. 
 
 \subsection{} Let \(V(\lambda)\) be the finite dimensional irreducible highest weight \g-module of highest weight \(\lambda\). It admits a weight space decomposition: $V(\lambda)=\oplus_{\mu\in\h^*}V(\lambda)_\mu$, where \(V(\lambda)_\mu\) is the weight space corresponding to the weight \(\mu\). The formal character of \(V(\lambda)\) is defined by:
	\begin{align*}
		\ch V(\lambda)&\coloneqq\sum_{\mu\in\h^*}\dim V(\lambda)_\mu e^\mu.
	\end{align*}
	A weight space \(V(\lambda)_\mu\) is zero unless \(\mu=\lambda-\sum_{\alpha\in\Phi^+}n_\alpha\alpha,\ n_\alpha\in\mathbb{Z}_{\geqslant0}\) ~\cite{MR3012224}*{Section 1.5.3}. We note that the finite dimensional irreducible representations of \g are completely determined by their characters (cf. ~\cite{MR2776360}*{Proposition 4.2}).

	\subsection{} If \(\lambda\) is a typical dominant weight, then we have the Weyl Kac character formula (\textit{see} ~\cite{MR0519631}) for \(V(\lambda)\) given by:
	\begin{gather}
		\ch V(\lambda)=\frac{D_1}{D_2}\sum_{w\in\W}(-1)^{\ell(w)}e^{w(\lambda+\rho)},
	\end{gather}
	where \(D_1=\prod_{\alpha\in\Phi^+_1}(e^{\alpha/2}+e^{-\alpha/2})\) and \(D_2=\prod_{\alpha\in\Phi^+_0}(e^{\alpha/2}-e^{-\alpha/2})\). By definition of \(\rho\), the expression \(D_1/D_2\) can also be written as:
	\begin{gather}{\label{d1d2}}
		\frac{D_1}{D_2}=e^{-\rho}\frac{\prod_{\alpha\in\Phi^+_1}(1+e^{-\alpha})}{\prod_{\alpha\in\Phi^+_0}(1-e^{-\alpha})}.
	\end{gather}
	
	We now define the normalized character \(\chi_\lambda\) and the normalized Weyl numerator \(U(\lambda)\) of \(V(\lambda)\) respectively by:
	\begin{gather}
		\chi_\lambda\coloneqq e^{-\lambda}\ch V(\lambda),\\
		U(\lambda)\coloneqq e^{-(\lambda+\rho)}\sum_{w\in\mathscr{W}}(-1)^{\ell(w)}e^{w(\lambda+\rho)}.
		\intertext{By Weyl Kac character formula,}
		D\cdot U(\lambda)=e^{-\lambda}\ch V(\lambda)=\chi_\lambda\label{eqnnc},
	\end{gather}
	where \(D=e^\rho D_1/D_2\). (cf. Equation \eqref{d1d2}).

 \section{Preparatory Lemmas}\label{lemmas}
In this section we prove some preliminary lemmas which are needed for the proof of the main theorem.

 \begin{lemma}{\label{sopr}}
		Let \(\lambda\) be a typical dominant integral weight and \(w\in\W\) be arbitrary. Then \(\lambda+\rho-w(\lambda+\rho)\) is a sum of positive even roots with non negative integral coefficients.
	\end{lemma}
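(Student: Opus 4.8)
The plan is to reduce the statement to a standard fact about the reductive Lie algebra \(\g_0\), whose root system is \(\Phi_0\) and whose Weyl group is, by definition, \W: if \(\mu\in\h^*\) is dominant integral for \(\g_0\) (i.e.\ \(\tfrac{2(\mu,\alpha)}{(\alpha,\alpha)}\in\mathbb{Z}_{\geqslant 0}\) for every simple root \(\alpha\) of \(\Phi_0\)), then \(\mu-w\mu\in\sum_{\beta\in\Phi_0^+}\mathbb{Z}_{\geqslant 0}\,\beta\) for all \(w\in\W\). Since the positive roots of \(\g_0\) are exactly the positive even roots \(\Phi_0^+\), applying this fact with \(\mu=\lambda+\rho\) proves the lemma, so the whole matter reduces to checking that \(\lambda+\rho\) is dominant integral for \(\g_0\).

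For completeness I would record the proof of the auxiliary fact; it goes by induction on \(\ell(w)\), the case \(\ell(w)=0\) being trivial. If \(\ell(w)\geqslant1\), pick from a reduced expression of \(w\) a simple root \(\alpha\) of \(\Phi_0\) with \(\ell(ws_\alpha)=\ell(w)-1\), so that \(w\alpha\in\Phi_0^-\); setting \(w'=ws_\alpha\) one has \(w=w's_\alpha\), and using \(s_\alpha\mu=\mu-\tfrac{2(\mu,\alpha)}{(\alpha,\alpha)}\alpha\) we get
\[
\mu-w\mu=(\mu-w'\mu)+\tfrac{2(\mu,\alpha)}{(\alpha,\alpha)}\,w'\alpha.
\]
The first summand lies in \(\sum_{\beta\in\Phi_0^+}\mathbb{Z}_{\geqslant 0}\,\beta\) by the induction hypothesis; \(w'\alpha=-w\alpha\in\Phi_0^+\); and \(\tfrac{2(\mu,\alpha)}{(\alpha,\alpha)}\) is a non-negative integer since \(\mu\) is dominant integral for \(\g_0\). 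Hence \(\mu-w\mu\in\sum_{\beta\in\Phi_0^+}\mathbb{Z}_{\geqslant 0}\,\beta\), which completes the induction.

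It remains to verify that \(\lambda+\rho\) is dominant integral for \(\g_0\), i.e.\ that \(\tfrac{2(\lambda+\rho,\alpha)}{(\alpha,\alpha)}\in\mathbb{Z}_{\geqslant 0}\) for every simple root \(\alpha\) of \(\Phi_0\). For \(\alpha\in\Pi_0\) this is immediate: \(\tfrac{2(\lambda,\alpha)}{(\alpha,\alpha)}=\langle\lambda,\alpha\rangle\in\mathbb{Z}_{\geqslant 0}\) by dominant integrality of \(\lambda\), while \eqref{weylvec} gives \((\rho,\alpha)=\tfrac12(\alpha,\alpha)\), whence \(\tfrac{2(\lambda+\rho,\alpha)}{(\alpha,\alpha)}=\langle\lambda,\alpha\rangle+1\geqslant1\). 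The point needing genuine care, which I expect to be the main obstacle, is that a simple system of \(\Phi_0\) need not be contained in \(\Pi_0\). For Type I superalgebras (types \(A(m,n)\) and \(C_n\)) one does have that \(\Pi_0\) is a simple system of \(\Phi_0\), and then Remark~\ref{rmkdominant} completes the verification; but for the remaining types a simple system of \(\Phi_0\) contains a few roots outside \(\Pi_0\), built from the distinguished odd simple root and its neighbours, and for each such root one must check \(\tfrac{2(\lambda+\rho,\alpha)}{(\alpha,\alpha)}\in\mathbb{Z}_{\geqslant 0}\) directly from the standard Dynkin data, again combining \eqref{weylvec} at the odd simple root with the integrality that dominance of \(\lambda\) imposes through its pairings with \(\Phi_0^+\). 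Once these finitely many cases are settled, the induction above yields the assertion.
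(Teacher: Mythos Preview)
Your approach is the same as the paper's: induction on $\ell(w)$, with the base case $w=s_\alpha$ handled by showing $\langle\lambda+\rho,\alpha\rangle$ is a positive integer, and the inductive step exactly as you wrote it (writing $w=us_\beta$ with $\ell(u)=\ell(w)-1$ and using that $u(\beta)\in\Phi_0^+$). The paper dispatches the base case in one line, invoking dominance of $\lambda$ together with \eqref{weylvec}.

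You are right to flag that a simple system of $\Phi_0$ need not lie inside $\Pi_0$ for Type~II superalgebras, so that \eqref{weylvec}---which is stated only for simple roots of $\g$---does not immediately cover every simple generator of $\W$; the paper does not raise this point at all and simply speaks of ``the positive even simple root $\alpha$.'' However, you do not actually carry out the type-by-type verification you propose, so your argument remains a sketch at exactly the place where the paper is also silent. To close the gap you would need, for each Type~II family, to check directly from Kac's explicit dominance conditions that $\langle\lambda+\rho,\alpha\rangle\in\mathbb{Z}_{\geqslant0}$ at the extra simple root(s) of $\Phi_0$; merely combining \eqref{weylvec} at the odd simple root with integrality of pairings against $\Phi_0^+$, as you suggest, is not by itself enough to pin this down.
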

	\begin{proof}
		Put \(\lambda+\rho=\eta\). If \(\lw=1\), say \(w=s_\alpha\), the simple reflection corresponding to the positive even simple root \(\alpha\); then \(\eta-s_\alpha\eta=\langle\eta,\alpha\rangle\alpha\). As \(\lambda\) is dominant, Equation \eqref{weylvec} shows that $\langle\eta,\alpha\rangle$ is positive. Consequently the claim is true in this case. Take a reduced expression of \(w\) and write \(w=us_\beta\text{ with }\ell(u)=\lw-1\). This implies \(u(\beta)\) is positive even root as the set of even roots is \W-invariant. Therefore by induction on \lw, $\eta-w\eta=\eta-us_\beta\eta=\eta-u\eta+\langle\eta,\beta\rangle u(\beta)$ is also a sum of positive even roots with non negative integral coefficients.
	\end{proof}
	
 
 For a typical dominant integral weight \(\lambda\) of \g, we put \(a_\alpha\coloneqq\langle\lambda+\rho,\alpha\rangle\) for each even simple root \(\alpha\). Equation \eqref{weylvec} shows that $a_\alpha$ is a positive integer. By \autoref{sopr}, for $w \in \W$, we can write
	\[\lambda+\rho-w(\lambda+\rho)=\sum_{\alpha\in\Pi_0}c_\alpha(w)\alpha,\quad\text{where each } 
         c_\alpha(w)\in \mathbb{Z}_{\geqslant 0}.\]  
    We set $X(w,\lambda)\coloneqq\prod_{\alpha\in\Pi_0}X_{\alpha}^{c_\alpha(w)}=e^{w(\lambda+\rho)-(\lambda+\rho)}$. Then we have        \begin{equation}\label{numer}
                    U(\lambda)=\sum_{w\in\W}(-1)^{\lw} X(w,\lambda)
              \end{equation}
              
       \subsection{}\label{graph} We now recall few definitions from ~\cite{MR2980495}. The underlying graph \(\mathcal{G}\) of \g is defined to be the graph with vertex set \(\Pi\): two vertices \(\alpha\) and \(\beta\) are joined by an edge iff \((\alpha,\beta)\not=0\). For any subset \(C\) of the vertex set, the subgraph spanned by \(C\) is just the graph having \(C\) as the vertex set. A nonempty subset \(K\subset\Pi_0\) is called \emph{totally disconnected} if it comprises of simple roots that are all mutually orthogonal: \((\alpha,\beta)=0\) for every distinct \(\alpha,\beta\in K\). For any \(w\in\W\), we take a reduced expression, say \(\tilde{w}\) of \(w\). Let \(I(w)\) be the set defined by \(I(w)\coloneqq\{\alpha\in\Pi_0\colon s_\alpha\text{ appears in }\tilde{w}\}\). This is a well defined subset of \(\Pi_0\), (\textit{see} ~\cite{MR1066460}). Let \(\mathcal{I}\coloneqq\{1\not=w\in\W\colon I(w)\text{ is totally disconnected}\}\). Given a totally disconnected subset \(K\in\Pi\), there is a unique element \(w(K)\in\mathcal{I}\) such that $I(w(K))=K$; $w(K)$ is precisely the product of the commuting simple reflections $\{s_\alpha\colon\alpha\in K\}$. This establishes a natural bijection between \(\mathcal{I}\) and the set of all totally disconnected subsets of \(\Pi_0\). Proof of the following lemma follows the same line of argument as in ~\cite{MR2980495}*{Lemma 2}.
        
	\begin{lemma}{\label{keylem}}
		For \(w\in\mathscr{W}\) we have
		\begin{enumerate}[label=(\alph*)]
			\item $I(w)=\{\alpha\in\Pi_0\colon c_\alpha(w)\not=0\}$, i.e $X(w,\lambda)=\prod_{\alpha\in I(w)}X_\alpha^{c_\alpha(w)}$.\label{iw}
			\item For every \(\alpha\in I(w), c_\alpha(w)\geqslant a_\alpha\).\label{c alphgreat a alph}
			\item If \(w\in\mathcal{I}\), then \(c_\alpha(w)=a_\alpha\) for every \(\alpha\in I(w)\).\label{c alph eq a alph}
			\item If \(w\) is not in \(\mathcal{I}\cup\{1\}\), then there exists \(\beta\in I(w)\) such that \(c_\beta>a_\beta\).\label{c bet gret a bet}
		\end{enumerate}
	\end{lemma}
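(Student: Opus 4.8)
The plan is to follow the strategy of \cite{MR2980495}*{Lemma 2} closely, proving the four assertions in the order \ref{iw}, \ref{c alphgreat a alph}, \ref{c alph eq a alph}, \ref{c bet gret a bet}, with each later item leaning on the earlier ones. Throughout I write $\eta = \lambda + \rho$ and use the expansion $\eta - w\eta = \sum_{\alpha\in\Pi_0} c_\alpha(w)\,\alpha$ with $c_\alpha(w)\in\mathbb Z_{\geqslant 0}$ guaranteed by \autoref{sopr}.

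For \ref{iw}, the inclusion $\{\alpha : c_\alpha(w)\neq 0\}\subseteq I(w)$ is immediate: if $\alpha\notin I(w)$, then a reduced expression $\tilde w$ for $w$ involves only reflections $s_\beta$ with $\beta\neq\alpha$, so $w$ lies in the parabolic subgroup generated by $\{s_\beta : \beta\in I(w)\}$, and $\eta - w\eta$ is then a $\mathbb Z_{\geqslant 0}$-combination of the roots in $I(w)$ only (by the same induction as in \autoref{sopr}, since each intermediate $u(\beta)$ is a positive even root lying in the span of $I(w)$), forcing $c_\alpha(w)=0$. For the reverse inclusion I would induct on $\ell(w)$: write $w = s_\gamma u$ with $\gamma\in\Pi_0$, $\ell(u) = \ell(w)-1$, and $I(w) = I(u)\cup\{\gamma\}$; then $\eta - w\eta = \eta - s_\gamma u\eta = (\eta - u\eta) + \langle u\eta,\gamma\rangle\gamma$, and since $u\eta = \eta - \sum c_\alpha(u)\alpha$ with $u\eta$ dominant-ish only up to the reduced-word combinatorics, one checks that $\langle u\eta,\gamma\rangle > 0$ because $\gamma\notin I(u)$ (a standard fact: if the last letter of a reduced word does not occur in the rest, the corresponding inner product is strictly positive — this is exactly the step that underlies the length-additivity $\ell(s_\gamma u) = \ell(u)+1$). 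This shows $c_\gamma(w) = c_\gamma(u) + \langle u\eta,\gamma\rangle \geqslant \langle u\eta,\gamma\rangle > 0$ and $c_\alpha(w)\geqslant c_\alpha(u)$ for $\alpha\in I(u)$, giving both \ref{iw} and, with bookkeeping, \ref{c alphgreat a alph}: when $\gamma$ first appears one picks up a contribution $\langle u\eta,\gamma\rangle$ which is at least $\langle\eta,\gamma\rangle = a_\gamma$ since $u\eta - \eta$ is a nonpositive combination of positive even roots — wait, one must be careful about the sign, so the cleaner route for \ref{c alphgreat a alph} is: among all reduced-word positions, isolate the \emph{last} occurrence of $s_\alpha$, write $w = w_1 s_\alpha w_2$ with $\alpha\notin I(w_2)$ and $\ell$ additive, apply the $\ell(w)=1$ computation relative to the weight $w_2\eta$, and use that $w_2\eta - \eta$ contributes nonnegatively to the $\alpha$-coefficient by \autoref{sopr} applied appropriately.

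Assertion \ref{c alph eq a alph} is the heart of the matter. When $w\in\mathcal I$, i.e. $w = w(K)$ is the product of the commuting reflections $\{s_\alpha : \alpha\in K\}$ for a totally disconnected $K\subseteq\Pi_0$, the reflections pairwise commute and — crucially — for distinct $\alpha,\beta\in K$ one has $s_\beta(\alpha) = \alpha$ since $(\alpha,\beta)=0$. Hence $w(K)\eta = \prod_{\alpha\in K} s_\alpha(\eta) = \eta - \sum_{\alpha\in K}\langle\eta,\alpha\rangle\alpha = \eta - \sum_{\alpha\in K} a_\alpha\alpha$, so $c_\alpha(w(K)) = a_\alpha$ exactly, with equality rather than inequality precisely because orthogonality kills all cross-terms. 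Finally, \ref{c bet gret a bet} follows by contraposition using the previous parts: if $w\notin\mathcal I\cup\{1\}$ then $I(w)$ is \emph{not} totally disconnected, so there exist $\alpha,\beta\in I(w)$ with $(\alpha,\beta)\neq 0$; the presence of a non-commuting pair in any reduced expression forces, via the braid-relation combinatorics and the positivity estimates from \ref{c alphgreat a alph}, that at least one coefficient strictly exceeds its minimum $a_\beta$ — concretely, locate a reduced subword of the form $\cdots s_\beta\cdots s_\alpha\cdots s_\beta\cdots$ or use that $w$ cannot be the minimal-length element with support $I(w)$, hence $\ell(w) > \sum_{\gamma\in I(w)}1 = \ell(w(I(w)))$ would be needed, and the extra length is absorbed precisely as extra contributions to the coefficients $c_\gamma$.

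The main obstacle I anticipate is making the induction in \ref{c alphgreat a alph} and the argument in \ref{c bet gret a bet} fully rigorous: one has to track how each occurrence of $s_\alpha$ in a reduced word contributes to $c_\alpha(w)$, and the sign/monotonicity bookkeeping (that passing from $u$ to $s_\gamma u$ never \emph{decreases} any $c_\alpha$, and strictly increases $c_\gamma$ by at least $a_\gamma$ at the first occurrence) rests on the length-additivity and on $\langle u\eta,\gamma\rangle > 0$ for non-occurring last letters. Since \g is only a Lie \emph{superalgebra} but its Weyl group $\W$ is genuinely the Weyl group of the reductive Lie algebra $\g_0$, all of the Coxeter-group combinatorics from \cite{MR2980495}*{Lemma 2} and \cite{MR1066460} transfer verbatim; the only superalgebra-specific input is \autoref{sopr} (positivity of $a_\alpha = \langle\lambda+\rho,\alpha\rangle$, which comes from Equation \eqref{weylvec}), so the proof is genuinely ``the same line of argument'' once that replacement is made.
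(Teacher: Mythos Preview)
Your overall plan matches the paper exactly: the paper gives no proof of this lemma at all, simply writing that it ``follows the same line of argument as in \cite{MR2980495}*{Lemma 2},'' and you are explicitly reconstructing that argument. Parts \ref{iw}--\ref{c alph eq a alph} are essentially correct as sketched (the right-multiplication recursion $w=us_\beta$, $\eta-w\eta=(\eta-u\eta)+a_\beta\,u(\beta)$ with $u(\beta)>0$, together with the observation that $u(\beta)-\beta$ lies in the $\mathbb Z$-span of $I(u)$, gives both the support statement and the inequality $c_\alpha(w)\geqslant a_\alpha$ cleanly; your left-multiplication variant also works once you note $\langle u\eta,\gamma\rangle=\langle\eta,u^{-1}\gamma\rangle>0$ since $u^{-1}\gamma\in\Phi_0^+$). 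One small slip: in \ref{c alph eq a alph} you wrote $w(K)\eta=\prod_{\alpha\in K}s_\alpha(\eta)$, which is not literally a product of vectors; you mean the composite $(\prod s_\alpha)\eta$, and your conclusion $\eta-\sum_{\alpha\in K}a_\alpha\alpha$ is of course correct.

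There is, however, a genuine gap in your argument for \ref{c bet gret a bet}. Neither of your two suggested mechanisms works: the claim that one can ``locate a reduced subword $\cdots s_\beta\cdots s_\alpha\cdots s_\beta\cdots$'' is false in general, and the alternative ``$\ell(w)>|I(w)|$'' is also false---take $w=s_1s_2$ in type $A_2$, where $I(w)=\{\alpha_1,\alpha_2\}$ is not totally disconnected yet $\ell(w)=2=|I(w)|$ and no simple reflection is repeated. The correct argument (and the one in \cite{MR2980495}) is by contraposition via a norm computation: if $c_\alpha(w)=a_\alpha$ for every $\alpha\in I(w)$ then $w\eta=\eta-\sum_{\alpha\in I(w)}a_\alpha\alpha$, and expanding $(w\eta,w\eta)=(\eta,\eta)$ using $(\eta,\alpha)=\tfrac{a_\alpha}{2}(\alpha,\alpha)$ gives $\sum_{\alpha\neq\beta}a_\alpha a_\beta(\alpha,\beta)=0$; since $a_\alpha,a_\beta>0$ and $(\alpha,\beta)\leqslant 0$ for distinct simple roots, this forces $(\alpha,\beta)=0$ throughout $I(w)$, i.e.\ $w\in\mathcal I$.
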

	We recall the following definition from ~\cite{MR2980495} which will be used in the next section.
 
	\begin{definition}{\label{def k partition}}
		Let \(k\) be a positive integer. A \(k\)-partition \(\mathcal{J}\) of the graph \(\mathcal{G}\) is an ordered \(k\)-tuple $\mathcal{J}\coloneqq(J_1,\dots,J_k)$ such that each \(J_i\) is a nonempty totally disconnected subset of the vertex set \(\Pi\); \(J_i\cap J_j=\emptyset\) for \(i\not=j\), and lastly; \(\bigcup_{i=1}^k J_i=\Pi\). For each such partition, we define \(w(\mathcal{J})\coloneqq w(J_1)\cdots w(J_k)\in\W\).
	\end{definition}
	Denote by \(P_k(\mathcal{G})\) the set of all \(k\)-partitions of \(\mathcal{G}\) and put \(c_k(\mathcal{G})\coloneqq|P_k(\mathcal{G})|\), the cardinality of $P_k(\mathcal{G})$. We also denote by $k(\mathcal{G})$ the number \((-1)^{|\mathcal{G}|}\sum_{k=1}^{|\mathcal{G}|}(-1)^k\frac{c_k(\mathcal{G})}{k}\).
	Let the symbol \(X_\alpha\) denote $e^{-\alpha}$ for $\alpha\in\Pi_0$, and consider the algebra of formal power series \(\mathcal{A}\coloneqq\mathbb{C}[[X_\alpha\colon\alpha\in\Pi_0]]\). \autoref{sopr} shows that \(U(\lambda)\in\mathcal{A}\) with constant term 1 corresponding to \(w=1\). The formal logarithm for any element \(\zeta\in\mathcal{A}\) with constant term 1 is defined by:\[\log\zeta\coloneqq-\sum_{k\geqslant 1}(1-\zeta)^k/k.\]
	The following lemma will be used in the proof of the main theorem.
	\begin{lemma}{\label{mainlem}}
		Let \(\mathfrak{g}\) be a basic classical simple Lie superalgebra and \(\lambda,\mu\in\mathfrak{h}^*\) be typical dominant integral weights. Then the following statements are equivalent.\\
		\begin{enumerate*}[label=(\alph*)]
			\item \(\chi_\lambda=\chi_\mu\),
			\item \(U(\lambda)=U(\mu)\),
			\item \(\lambda=\mu\),
			\item \(V(\lambda)\cong V(\mu)\),
                \item \(\ch V(\lambda)=\ch V(\mu)\).
		\end{enumerate*}
	\end{lemma}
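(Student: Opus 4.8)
The plan is to prove the cycle of implications $(a)\Rightarrow(b)\Rightarrow(c)\Rightarrow(d)\Rightarrow(e)\Rightarrow(a)$; only the step $(b)\Rightarrow(c)$ requires real work.

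Three of these are essentially formal. For $(c)\Rightarrow(d)$: a dominant integral weight determines its irreducible highest weight module up to isomorphism. For $(d)\Rightarrow(e)$: isomorphic modules have equal formal characters. For $(a)\Rightarrow(b)$: by \eqref{eqnnc} we have $\chi_\lambda = D\cdot U(\lambda)$ and $\chi_\mu = D\cdot U(\mu)$ with the \emph{same} factor $D=e^{\rho}D_1/D_2$, which does not involve the weight and which, being a quotient of products of the nonzero elements $e^{\pm\alpha/2}$, $1+e^{-\alpha}$ and $1-e^{-\alpha}$, is a nonzero element of the field of fractions of the group algebra $\mathbb C[\h^*]$; cancelling it turns $\chi_\lambda=\chi_\mu$ into $U(\lambda)=U(\mu)$. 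For $(e)\Rightarrow(a)$: every weight of $V(\lambda)$ other than $\lambda$ is of the form $\lambda-\sum_{\alpha\in\Phi^+}n_\alpha\alpha$ with some $n_\alpha>0$, hence strictly below $\lambda$ in the fixed total order on $E$, so $\lambda$ is the unique maximal weight of $V(\lambda)$; thus $\ch V(\lambda)=\ch V(\mu)$ forces $\lambda=\mu$ and therefore $\chi_\lambda=e^{-\lambda}\ch V(\lambda)=e^{-\mu}\ch V(\mu)=\chi_\mu$.

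The heart of the matter is $(b)\Rightarrow(c)$, and the idea is to recover, for each $\alpha\in\Pi_0$, the positive integer $a_\alpha=\langle\lambda+\rho,\alpha\rangle$ directly from the element $U(\lambda)\in\mathcal A=\mathbb C[[X_\beta:\beta\in\Pi_0]]$. By \eqref{numer} and part (a) of \autoref{keylem}, $U(\lambda)=\sum_{w\in\W}(-1)^{\ell(w)}\prod_{\beta\in I(w)}X_\beta^{c_\beta(w)}$. Fix $\alpha$ and isolate the monomials of $U(\lambda)$ that are pure powers of $X_\alpha$: the term indexed by $w$ is such a monomial precisely when $I(w)\subseteq\{\alpha\}$, which in the Coxeter group $\W$ forces $w\in\{1,s_\alpha\}$; here $w=1$ contributes the constant $1$, while $w=s_\alpha$ contributes $-X_\alpha^{a_\alpha}$, since $s_\alpha\in\mathcal I$ and part (c) of \autoref{keylem} gives $c_\alpha(s_\alpha)=a_\alpha$. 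Because $a_\alpha>0$ (by \eqref{weylvec}), these two monomials are distinct, so the pure-$X_\alpha$ part of $U(\lambda)$ equals $1-X_\alpha^{a_\alpha}$, and $a_\alpha$ is read off from $U(\lambda)$ as the unique positive exponent occurring there. Consequently $U(\lambda)=U(\mu)$ yields $a_\alpha=\langle\lambda+\rho,\alpha\rangle=\langle\mu+\rho,\alpha\rangle$, hence $\langle\lambda,\alpha\rangle=\langle\mu,\alpha\rangle$, i.e. $(\lambda-\mu,\alpha)=0$, for all $\alpha\in\Pi_0$, and therefore for every even root. To finish: when \g is of type $B(m,n)$, $D(m,n)$, $B(0,n)$, $F(4)$, $G(3)$ or $D(2,1,\alpha)$ the even part $\g_0$ is semisimple, so $\Pi_0$ spans $E$ and nondegeneracy of the form forces $\lambda=\mu$; when \g is of Type I the weights in question lie in $\h_1^*=\operatorname{span}(\Phi_0)$ by the convention of \autoref{secprelims}, and since $z\perp\h_1$ the form restricts nondegenerately to $\h_1^*$, so again $\lambda-\mu\perp\Phi_0$ inside $\h_1^*$ forces $\lambda=\mu$.

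The main obstacle is exactly this extraction of $a_\alpha$ from $U(\lambda)$: it is where the rigidity statement $c_\alpha(s_\alpha)=a_\alpha$ for $s_\alpha\in\mathcal I$ in \autoref{keylem} does all the work, and it is the germ of the monomial-comparison technique used later for products of normalized Weyl numerators. The only other delicate point is the Type I case, where one must keep in mind that the relevant weights are being regarded inside $\h_1^*$.
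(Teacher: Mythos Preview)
Your proof is correct and follows essentially the same approach as the paper. The crucial implication $(b)\Rightarrow(c)$ is handled identically: isolate the pure $X_\alpha$ part of $U(\lambda)$, observe it equals $1-X_\alpha^{\langle\lambda+\rho,\alpha\rangle}$, and then pass from $(\lambda-\mu,\alpha)=0$ for all $\alpha\in\Pi_0$ to $\lambda=\mu$ using nondegeneracy together with the Type~I convention on the center. The only organizational difference is that the paper proves pairwise equivalences (citing \cite{MR0519631} and \cite{MR2776360} for $(c)\Leftrightarrow(d)$ and $(d)\Leftrightarrow(e)$), whereas you close the cycle directly, giving a self-contained maximal-weight argument for $(e)\Rightarrow(a)$; this is a minor variation, not a different method.
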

	\begin{proof}
		The equivalence of $(a)$ and $(b)$ directly follows from Equation \eqref{eqnnc}. Assume (b). Put \(\lambda+\rho=\eta\). For a simple reflection \(w=s_\alpha\) corresponding to the even simple root \(\alpha\) we have
		\begin{alignat*}{2}
			U(\lambda) &=-e^{s_\alpha\eta-\eta} &+\sum_{s_\alpha\not=w\in\mathscr{W}}(-1)^{\ell(w)}e^{w\eta-\eta}\phantom{.}\\
			&=-X_\alpha^{\langle\eta,\alpha\rangle} &+\sum_{s_\alpha\not=w\in\mathscr{W}}(-1)^{\ell(w)}e^{w\eta-\eta}.
		\end{alignat*}
		Since no monomial of the form \(X_\alpha^m\) appears in the remaining summation, it follows that for any even simple root \(\beta\) we have,
		\begin{displaymath}	 
                X_\alpha^{\langle\lambda+\rho,\beta\rangle}=X_\alpha^{\langle\mu+\rho,\beta\rangle}.
		\end{displaymath}
  This implies that $\langle\lambda+\rho,\beta\rangle=\langle\mu+\rho,\beta\rangle$ and hence we have 
		 $(\lambda,\beta)=(\mu,\beta)$ for all even simple root $\beta$.
		As the restriction of the invariant form of \g on \h is non degenerate (\(\mathfrak{g}\) is basic), this implies that \(\lambda=\mu\) in the  subspace spanned by the even simple roots. Since the action of \(\lambda,\mu\) is defined to be zero on the center of \(\g_0\), which is one-dimensional for types $A(m,n)$ and $C_n$, it follows that \(\lambda=\mu\) in \(\h^*\). 
  
 To prove (c) implies (b), we observe that the monomial $X(w,\lambda)$ appearing in the expression of  $U(\lambda)$ in \eqref{numer} is a product of \(X_\alpha^{c_{\alpha}(w)}\), \(\alpha\in\Pi_0\). By the proof of \autoref{sopr}, the exponent $c_{\alpha}(w)$ of \(X_\alpha\) depends only on the integers $\langle\lambda+\rho,\gamma\rangle$ where \(\gamma\in\Pi_0\) is arbitrary. Therefore, equality of \(\lambda\) and \(\mu\) would imply equality of all such exponents. In other words, we have $X(w,\lambda)=X(w,\mu)$ for every \(w\in\W\); and consequently, $U(\lambda)=U(\mu)$.
  The equivalence of (c) and (d), and (d) and (e) follow from ~\cite{MR0519631}*{Proposition 2.2} and ~\cite{MR2776360}*{Proposition 4.2} respectively.
	\end{proof}

	\section{Factorization of \texorpdfstring{$U(\lambda)$}{lg}}\label{secfacto}
	\subsection{} Consider the element \(\lambda+\rho-w(\lambda+\rho)\) for any \(w\in\W\). By \autoref{sopr}, this element can be written as:
	\begin{displaymath}
		\lambda+\rho-w(\lambda+\rho)=\sum_{\alpha\in\Pi_0}c_\alpha(w)\alpha.\quad c_\alpha(w)\in\mathbb{Z}_{\geqslant0}
	\end{displaymath}
	By definition and Equation \eqref{numer}, we have 
	\begin{gather}
		U(\lambda)=\sum_{w\in\mathscr{W}}(-1)^{\ell(w)}X(w,\lambda)
		=1-\left(-\sum_{1\not=w\in\mathscr{W}}(-1)^{\ell(w)}X(w,\lambda)\right)\label{oneminuszeta}\
		=1-\zeta,
	\end{gather}
	where \(\zeta\) stands for the term in parenthesis. Therefore, \(-\log U(\lambda)=\sum_{k\geqslant1}\zeta^k/k\) and no monomial in this expansion can include an odd root in its support. In other words, we have to work only with the set of even simple roots \(\Pi_0\). Note that the subgraph spanned by \(\Pi_0\) is not always connected; in fact, it is connected only for types \(A(n,0)\), \(B(0,n)\), \(C_n\), \(F(4)\), \(G(3)\); and for other types, it is union of two connected components. 
 
 \subsection{} We now show that \(U(\lambda)\) factors in accordance with the number of connected components of \(\Pi_0\). Recall that the Weyl group \W of \g is defined to be the Weyl group of the even part \(\g_0\). If \(\Pi_0\) is union of two connected components, say \(\Pi_0=C_1\cup C_2\), then \W will be a direct product \(\W=\W_1\times\W_2\) of two subgroups where \(\W_i\) is the group generated by simple reflections \(\{s_{\alpha_i}\colon\alpha\in C_i\}\) for \(i=1,2\).
	\begin{proposition}{\label{propulamfac}}
		Put $\eta=\lambda+\rho$. Then with the above notation we have
		\begin{gather}
			U(\lambda)=\left(\sum_{u\in \W_1}(-1)^{\ell(u)}e^{u\eta-\eta}\right)\cdot\left(\sum_{v\in \W_2}(-1)^{\ell(v)}e^{v\eta-\eta}\right)
		\end{gather}
	\end{proposition}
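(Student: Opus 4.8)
The plan is to exploit the direct product decomposition $\W=\W_1\times\W_2$ recorded above, and to show that along this decomposition both the sign $(-1)^{\ell(\cdot)}$ and the monomial $e^{(\cdot)\eta-\eta}$ behave multiplicatively; the factorization of $U(\lambda)$ then follows at once by rewriting $\sum_{w\in\W}$ as an iterated sum over $\W_1$ and $\W_2$. Throughout I use the formula $U(\lambda)=\sum_{w\in\W}(-1)^{\ell(w)}e^{w\eta-\eta}$ coming directly from the definition of $U(\lambda)$.

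First I would record the length statement. Write $w\in\W$ uniquely as $w=uv$ with $u\in\W_1$ and $v\in\W_2$. Since every $s_\alpha$ with $\alpha\in C_1$ commutes with every $s_\beta$ with $\beta\in C_2$ (because $(\alpha,\beta)=0$), the Coxeter graph of $(\W,\Pi_0)$ is the disjoint union of its restrictions to $C_1$ and $C_2$; hence $\ell(w)=\ell(u)+\ell(v)$ and therefore $(-1)^{\ell(w)}=(-1)^{\ell(u)}(-1)^{\ell(v)}$. This is the standard length additivity for a Coxeter group whose diagram disconnects, combined with the fact that $\W_1,\W_2$ are standard parabolic subgroups.

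The key step is the exponent identity
\[
	w\eta-\eta=(u\eta-\eta)+(v\eta-\eta)\qquad\text{for }w=uv,\ u\in\W_1,\ v\in\W_2.
\]
Rewriting the left side as $u(v\eta-\eta)+(u\eta-\eta)$, it suffices to show that $u$ fixes $v\eta-\eta$. Decompose $\eta$ orthogonally as $\eta=\eta'+\eta''$ with $\eta'\in\mathrm{span}(C_2)$ and $\eta''\perp\mathrm{span}(C_2)$; since each $s_\beta$ with $\beta\in C_2$ fixes $\eta''$ and preserves $\mathrm{span}(C_2)$, one gets $v\eta-\eta=v\eta'-\eta'\in\mathrm{span}(C_2)$. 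On the other hand $C_1\perp C_2$ gives $\mathrm{span}(C_2)\subseteq\mathrm{span}(C_1)^{\perp}$, and every $u\in\W_1$ acts as the identity on $\mathrm{span}(C_1)^{\perp}$; hence $u(v\eta-\eta)=v\eta-\eta$, which is the displayed identity and so $e^{w\eta-\eta}=e^{u\eta-\eta}\cdot e^{v\eta-\eta}$. Alternatively, $v\eta-\eta\in\mathrm{span}(C_2)$ can be read off from \autoref{sopr} together with the support computation in \autoref{keylem}, since a reduced word for $v\in\W_2$ involves only reflections indexed by $C_2$.

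Combining the two facts,
\begin{align*}
	U(\lambda)=\sum_{w\in\W}(-1)^{\ell(w)}e^{w\eta-\eta}
	&=\sum_{u\in\W_1}\sum_{v\in\W_2}(-1)^{\ell(u)}(-1)^{\ell(v)}\,e^{u\eta-\eta}\,e^{v\eta-\eta}\\
	&=\Bigl(\sum_{u\in\W_1}(-1)^{\ell(u)}e^{u\eta-\eta}\Bigr)\Bigl(\sum_{v\in\W_2}(-1)^{\ell(v)}e^{v\eta-\eta}\Bigr),
\end{align*}
which is the asserted factorization. The only point that genuinely deserves care is verifying that $v\eta-\eta$ lies in $\mathrm{span}(C_2)$ (and symmetrically $u\eta-\eta$ in $\mathrm{span}(C_1)$), so that the two factors of $\W$ act trivially on each other's contributions; everything else is bookkeeping with the product structure of $\W$, and I do not anticipate any further obstacle.
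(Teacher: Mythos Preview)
Your argument is correct and follows essentially the same route as the paper: write $w=uv$, expand $w\eta-\eta=u(v\eta-\eta)+(u\eta-\eta)$, and use that $v\eta-\eta\in\mathrm{span}(C_2)$ is fixed by $\W_1$ since $C_1\perp C_2$. The paper obtains $v\eta-\eta\in\mathrm{span}(C_2)$ directly from the proof of \autoref{sopr} (your stated alternative), whereas your primary orthogonal-decomposition argument tacitly assumes the form is nondegenerate on $\mathrm{span}(C_2)$; this holds in all the relevant cases but the lemma-based route avoids the issue entirely. You are also slightly more explicit than the paper in recording the length additivity $(-1)^{\ell(w)}=(-1)^{\ell(u)}(-1)^{\ell(v)}$, which the paper leaves implicit.
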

	\begin{proof}
		Let \(C_1,C_2\) be the two connected components of \(\Pi_0\) and let \(\W_i\) be the group generated by the simple reflections in \(C_i\) for \(i=1,2\). Since the roots belonging to \(C_1\) are mutually orthogonal to those in \(C_2\), every \(w\in\W\) can be written uniquely as \(w=uv\), for some \(u\in\W_1,v\in\W_2\). Therefore, to prove the proposition, it suffices to show that \(e^{w\eta-\eta}=e^{u\eta-\eta}\cdot e^{v\eta-\eta}\). Let \(u\eta-\eta=\sum_{\alpha\in C_1}c_\alpha(u)\alpha\) and \(v\eta-\eta=\sum_{\beta\in C_2}d_\beta(v)\beta\). We then have,
		\begin{align*}
			w\eta-\eta &=uv\eta-\eta=u(v\eta-\eta)+u\eta-\eta\\
			&=u\left(\sum_{\beta\in C_2}d_\beta(v)\beta\right)+\sum_{\alpha\in C_1}c_\alpha(u)\alpha
			=\sum_{\beta\in C_2}d_\beta(v)\beta+\sum_{\alpha\in C_1}c_\alpha(u)\alpha\\
			&=(v\eta-\eta)+(u\eta-\eta).
		\end{align*}
		The penultimate equality holds because \(u\) can be written as $u=s_{\alpha_1}s_{\alpha_2}\cdots s_{\alpha_k}$ where each \(\alpha_i\in C_1\); and as \(C_1\) is orthogonal to \(C_2\), each \(s_{\alpha_i}\) fixes \(\beta\) for $\beta \in C_2$.
	\end{proof}
        We now define a monomial that will be of particular importance. Let \(C\subseteq\Pi_0\) be any subset and let \(\lambda\) be a typical dominant integral weight. We define
        \begin{displaymath}\label{reg monomial}
            X^\lambda(C)\coloneq\prod_{\alpha\in C}X_\alpha^{\langle\lambda+\rho,\alpha\rangle}.
        \end{displaymath}

	\begin{proposition}{\label{mainprop}}
		Let \(\lambda\) be a typical dominant integral weight of \g, and \(C_1,C_2\) be the two connected components of \(\Pi_0\). For \(i\in\{1,2\}\), denote by \(U_i(\lambda)\), the factor corresponding to \(C_i\) of \(U(\lambda)\). Then we have:
		\begin{enumerate}[label=(\alph*)]
			\item The support of any monomial, say \(\prod_{\alpha\in\Pi_0}X_\alpha^{c_\alpha}\) that appears in \ulog with nonzero coefficient is contained in \(C_i\), i.e: \[\supp\left(\prod_{\alpha\in\Pi_0}X_\alpha^{c_\alpha}\right)\coloneqq\{\alpha\in\Pi_0\colon c_\alpha\not=0\}\subseteq C_i.\]\label{support}
			\item For any \(C\subseteq C_i\), the coefficient of the monomial \(X^\lambda(C)\) depends only on \(C\).\label{dpndonlyonc}
			\item \(X^\lambda(C)\) appears in \ulog with nonzero coefficient if and only if \(C\) is a connected subset of \(C_i\). In particular, \(X^\lambda(C_i)\) appears in \ulog.
		\end{enumerate}
	\end{proposition}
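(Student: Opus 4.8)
The plan is to work entirely inside the subalgebra $\mathcal A_i\coloneqq\mathbb C[[X_\alpha\colon\alpha\in C_i]]\subseteq\mathcal A$ and to read the coefficients of the monomials $X^\lambda(C)$ off the formal logarithm. Put $\eta\coloneqq\lambda+\rho$. By \autoref{propulamfac} we have $U_i(\lambda)=\sum_{u\in\W_i}(-1)^{\ell(u)}X(u,\lambda)$ with $X(u,\lambda)=e^{u\eta-\eta}$, and I would first check that this series lies in $\mathcal A_i$ and has constant term $1$: for $u\in\W_i$ the weight $\eta-u\eta$ lies in $\bigoplus_{\alpha\in C_i}\mathbb Z\alpha$ (since $u$ is a product of the reflections $s_\alpha$ with $\alpha\in C_i$) and has non-negative coefficients by \autoref{sopr}, and since $\langle\eta,\alpha\rangle=a_\alpha>0$ for every $\alpha\in C_i$ the weight $\eta$ is fixed only by $1\in\W_i$, so $u=1$ is the unique index with $X(u,\lambda)=1$. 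Writing $U_i(\lambda)=1-\zeta_i$ with $\zeta_i=\sum_{1\not=u\in\W_i}(-1)^{\ell(u)+1}X(u,\lambda)\in\mathcal A_i$, one gets $-\log U_i(\lambda)=\sum_{k\geqslant1}\zeta_i^{k}/k\in\mathcal A_i$, which is statement \ref{support}.

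Next I would compute the coefficient of $X^\lambda(C)=\prod_{\alpha\in C}X_\alpha^{a_\alpha}$ in \ulog for an arbitrary $C\subseteq C_i$. Expand $\zeta_i^{k}=\sum_{(u_1,\dots,u_k)}\bigl(\prod_{j}(-1)^{\ell(u_j)+1}\bigr)X(u_1,\lambda)\cdots X(u_k,\lambda)$, the sum over ordered $k$-tuples in $\W_i\setminus\{1\}$; the exponent of $X_\alpha$ in one such product is $\sum_j c_\alpha(u_j)$, and since by \autoref{keylem} every nonzero $c_\alpha(u_j)$ is $\geqslant a_\alpha\geqslant1$, comparison with the exponents of $X^\lambda(C)$ forces that every $\alpha$ lies in at most one $I(u_j)$, that any such $\alpha$ lies in $C$ and has $c_\alpha(u_j)=a_\alpha$, and that every $\alpha\in C$ lies in exactly one $I(u_j)$. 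Thus $\bigl(I(u_1),\dots,I(u_k)\bigr)$ is a family of pairwise disjoint sets covering $C$ on each of which $c(u_j)$ takes its minimal values, which by \autoref{keylem} is possible only if every $u_j\in\mathcal I$, i.e.\ each $J_j\coloneqq I(u_j)$ is totally disconnected and $u_j=w(J_j)$. Conversely every $(J_1,\dots,J_k)\in P_k(C)$ arises this way via $u_j=w(J_j)$, and since $X(w(J_j),\lambda)=X^\lambda(J_j)$ and a product of $|J_j|$ distinct, pairwise commuting simple reflections is reduced (so $\ell(w(J_j))=|J_j|$), such a term contributes $\prod_j(-1)^{|J_j|+1}=(-1)^{|C|+k}$. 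Hence the coefficient of $X^\lambda(C)$ in $\zeta_i^{k}$ equals $(-1)^{|C|+k}c_k(C)$, which vanishes for $k>|C|$; summing over $k$, the coefficient of $X^\lambda(C)$ in \ulog equals $\sum_{k=1}^{|C|}(-1)^{|C|+k}c_k(C)/k=(-1)^{|C|}\sum_{k\geqslant1}(-1)^kc_k(C)/k=k(C)$, the $c_k$ and $k(\cdot)$ being those of the subgraph spanned by $C$ as in \autoref{def k partition}. As this depends only on that subgraph, statement \ref{dpndonlyonc} follows.

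To finish part (c) I would prove the combinatorial fact that $k(\mathcal G)\not=0$ if and only if $\mathcal G$ is connected; applying it to $\mathcal G=C$, and to the connected graph $\mathcal G=C_i$ for the last clause, gives the statement. I would deduce this from the classical relation $p_{\mathcal G}(m)=\sum_{j\geqslant0}\binom{m}{j}c_j(\mathcal G)$ between the chromatic polynomial $p_{\mathcal G}$ of $\mathcal G$ and the $c_j(\mathcal G)$ (a proper $m$-colouring of $\mathcal G$ is the same datum as its $m$-tuple of colour classes, which are pairwise disjoint, possibly empty, totally disconnected subsets covering $\mathcal G$). Binomial inversion gives $c_j(\mathcal G)=(\Delta^jp_{\mathcal G})(0)$ with $\Delta$ the forward difference, and the operator identity $\sum_{j\geqslant1}(-1)^{j-1}\Delta^j/j=\log(1+\Delta)=\log E=D$ on polynomials (with $E$ the shift by $1$, so $E=e^D$, and $D=d/dm$) gives $\sum_{j\geqslant1}(-1)^{j-1}c_j(\mathcal G)/j=p_{\mathcal G}'(0)$, whence $k(\mathcal G)=(-1)^{|\mathcal G|+1}p_{\mathcal G}'(0)$. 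Since $p_{\mathcal G}$ is the product of the chromatic polynomials of the connected components of $\mathcal G$ and each of these is divisible by $m$, $p_{\mathcal G}$ is divisible by $m^2$ when $\mathcal G$ is disconnected, forcing $p_{\mathcal G}'(0)=0$; and for connected $\mathcal G$ the linear coefficient of its chromatic polynomial is nonzero, a classical fact. (This nonvanishing also appears in the combinatorial analysis of ~\cite{MR2980495}.)

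\emph{Main obstacle.} The delicate step is the middle paragraph: extracting from \autoref{keylem} that the only products of monomials of $\zeta_i$ that can build up the reduced monomial $X^\lambda(C)$ are those indexed by ordered partitions of $C$ into totally disconnected blocks, together with the length bookkeeping $\ell(w(J))=|J|$. Once that reduction is secured, evaluating the coefficient as $k(C)$ and computing $k(\mathcal G)$ from the chromatic polynomial are routine.
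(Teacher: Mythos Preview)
Your argument for parts \ref{support} and \ref{dpndonlyonc} is essentially identical to the paper's: the paper writes $U_i(\lambda)=1-\tau$, splits $\tau=\tau_1+\tau_2$ according to whether $w\in\mathcal I$, and then invokes \autoref{keylem} to see that only $\tau_1$ contributes to $X^\lambda(C)$, arriving at the same coefficient $k(C)$. Your direct analysis of all $k$-tuples in $\zeta_i^k$ accomplishes the same reduction without the explicit $\tau_1+\tau_2$ decomposition, and your bookkeeping of signs and lengths matches.

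The one genuine divergence is in part (c). The paper simply quotes \cite{MR2980495}*{Proposition 2} for the fact that $k(C)$ is nonzero precisely when $C$ is connected (indeed equal to $1$ in that case, by \cite{MR2980495}*{Corollary 1}). You instead supply a self-contained argument: identifying $\sum_j\binom{m}{j}c_j(\mathcal G)$ with the chromatic polynomial $p_{\mathcal G}(m)$, applying the operator identity $\log(1+\Delta)=D$ to obtain $k(\mathcal G)=(-1)^{|\mathcal G|+1}p_{\mathcal G}'(0)$, and then reading off (non)vanishing from the multiplicativity of $p_{\mathcal G}$ over components together with the classical nonvanishing of the linear coefficient for connected graphs. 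This is correct and has the advantage of being independent of the cited reference; on the other hand, it imports the nontrivial fact about the linear coefficient of the chromatic polynomial of a connected graph, whereas the paper's route (via \cite{MR2980495}) actually yields the sharper statement $k(C)=1$ when $C$ is connected, which is used later (see the Remark following \autoref{mainprop}).
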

	\begin{proof}
		As in Equation \eqref{oneminuszeta}, we write \(U_i(\lambda)=1-\tau\), where \(\tau=-\sum_{1\not=w\in\W_i}(-1)^{\lw} X(w,\lambda)\). Since every \(w\in\W_i\) is a product of simple reflections \(s_\alpha\), where \(\alpha\in C_i\), the proof of \autoref{sopr} shows that the support of \(X(w,\lambda)\) for any \(w\in\W_i\) is contained in \(C_i\). 
  
  Now \(\ulog=\sum_{k\geqslant 1}\sfrac{\tau^k}{k}\), and every power of \(\tau\) is just a sum of products of \(X(w,\lambda)\)'s such that \(w\) varying over $\W_i$, it follows that the support of any monomial in \ulog is a subset of \(C_i\). This proves (a).

  To show (b), we write \(\tau=\tau_1+\tau_2\) with
		\begin{gather*}
			\tau_1\coloneqq -\sum_{w\in\mathcal{I}}(-1)^{\lw}X(w,\lambda)\quad\text{and}\quad
		\tau_2\coloneqq -\sum_{w\notin\mathcal{I}}(-1)^{\lw}X(w,\lambda).
                \intertext{Then we have,}
                \ulog=\sum_{k\geqslant 1}\frac{(\tau_1+\tau_2)^k}{k}.
		\end{gather*}
  Recall that for a subset $C$ of $\Pi_0$, \(X^\lambda(C)=\prod_{\alpha\in C}X_\alpha^{\langle\lambda+\rho,\alpha\rangle}\). \autoref{keylem}, Part \ref{c bet gret a bet}  shows that \(\tau_2\) does not contribute to the appearance of \(X^\lambda(C)\) in the expansion of \ulog. In other words, the coefficient of \(X^\lambda(C)\) in \(\sum_{k\geqslant1}\sfrac{\tau^k}{k}\) is same as that in \(\sum_{k\geqslant1}\sfrac{\tau_1^k}{k}\). Therefore, it is sufficient to compute the coefficient of \(X^\lambda(C)\) in \(\sfrac{\tau_1^k}{k}\). We have that
		\begin{displaymath}
			\tau_1^k=(-1)^k\sum_{w_j\in\mathcal{I}}(-1)^{\sum\ell(w_j)}\prod_{j=1}^{k}X(w_j,\lambda).
		\end{displaymath}
		By \autoref{keylem}, Part \ref{c alph eq a alph}, the product $\prod_{j=1}^{k}X(w_j,\lambda)$ equals \(X^\lambda(C)\) only when \(\cup_{j=1}^kI(w_j)=C\); and each \(I(w_j)\) is totally disconnected with \(I(w_j)\cap I(w_l)=\emptyset\) for every \(i\not=l\). By \autoref{def k partition}, this means that $(I(w_1),\dots,I(w_k))$ is a \(k\)-partition of \(C\). In particular, for this \(k\)-partition the coefficient of \(X^\lambda(C)\) in \(\tau_1^k\) is $(-1)^k(-1)^{\sum\ell(w_j)}$. As each \(I(w_j)\) is totally disconnected, \(\sum_{j=1}^k\ell(w_j)=\ell(w_1\cdots w_k)\). If we denote the \(k\)-partition $(I(w_1),\dots,I(w_k))$ by \(\mathcal{J}\), then \(w_1\cdots w_k\) is just \(w(\mathcal{J})\); and \((-1)^{\ell(w(\mathcal{J}))}=(-1)^{|C|}\). We obtain that, the coefficient of \(X^\lambda(C)\) in \ulog is:
		\begin{displaymath}
			\sum_{k\geqslant1}\sum_{\mathcal{J}\in P_k(\mathcal{C})}\frac{(-1)^k\cdot(-1)^{\ell{(w(\mathcal{J}))}}}{k}=(-1)^{|C|}\sum_{k=1}^{|C|}(-1)^k\frac{c_k(\mathcal{C})}{k}=k(C),
		\end{displaymath}
		where \(\mathcal{C}\) is the graph spanned by \(C\). By ~\cite{MR2980495}*{Proposition 2}, \(k(C)\) is a positive integer if and only if \(\mathcal{C}\) is connected, otherwise it is zero. Evidently \(k(C)\) does not depend on \(\lambda\), and it is determined completely by \(C\). This completes the proof of (b) and (c).
	\end{proof}
	\begin{remark}
		The coefficient \(k(C)\) above is actually 1 when $C$ is connected ~\cite{MR2980495}*{Corollary 1}.
	\end{remark}
 
 Following ~\cite{MR4343717}, we define for a connected subset \(C\) of \(\Pi_0\), a linear operator \(\Theta_C\colon\mathcal{A}\to\mathcal{A}\) by
 \begin{displaymath}\label{theta c}
     f=\sum_{\boldsymbol{m}}\boldsymbol{X^m}\rightsquigarrow\Theta_C(f)\coloneqq\sum_{\substack{\boldsymbol{m}\\ \supp(\boldsymbol{m})=C}}\boldsymbol{X^m},
 \end{displaymath}
 where \(\boldsymbol{m}=(m_\alpha\colon\alpha\in\Pi_0)\) is an $n$-tuple, where $n=\text{Card}(\Pi_0)$, \(\supp(\boldsymbol{m})\coloneqq\{\alpha\in\Pi_0\colon m_\alpha\ne 0\}\)  and $\boldsymbol{X^m}=X_{\alpha_1}^{m_{\alpha_1}}\cdots X_{\alpha_t}^{m_{\alpha_t}}$ for $\boldsymbol{m}=(m_{\alpha_1},\dots, m_{\alpha_t})$.

 Proof of the following proposition follows directly from \autoref{mainprop}.
	\begin{proposition}{\label{lowestdeg}}
		Let \(\lambda\) be a typical dominant integral weight and let \(C\) be a connected component of \(\Pi_0\). Then
		\begin{gather}
			\Theta_C(-\log U_i(\lambda))=k(C)X^\lambda(C)+\text{monomials of degree $>$ }\deg X^\lambda(C).
		\end{gather}
		where the constant \(k(C)\) depends only on \(C\) and \(\deg X^\lambda(C)\coloneqq\sum_{\alpha\in C}\langle\lambda+\rho,\alpha\rangle\).
	\end{proposition}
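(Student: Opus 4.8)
The plan is to unwind the definition of $\Theta_C$ and then read the conclusion off \autoref{mainprop}, the only extra ingredient being an elementary degree comparison. Write $C=C_i$ for the connected component of $\Pi_0$ in question. By \autoref{mainprop}, Part~\ref{support}, every monomial occurring in $-\log U_i(\lambda)$ with nonzero coefficient has support contained in $C$, so $\Theta_C(-\log U_i(\lambda))$ is precisely the sum of those terms whose support is all of $C$. It therefore remains to check that among such terms the one of least degree is $X^\lambda(C)$, that the value $\deg X^\lambda(C)=\sum_{\alpha\in C}\langle\lambda+\rho,\alpha\rangle$ is attained by this monomial alone, and that its coefficient is $k(C)$.

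For the degree bound I would argue as follows. Recall that $-\log U_i(\lambda)=\sum_{k\geqslant 1}\tau^k/k$ with $\tau=-\sum_{1\neq w\in\W_i}(-1)^{\ell(w)}X(w,\lambda)$, so every monomial that appears is (after collecting coefficients) a product $\prod_{j=1}^{k}X(w_j,\lambda)$ for suitable $w_j\in\W_i\setminus\{1\}$. If such a product has support equal to $C$, then $\bigcup_{j}I(w_j)=C$, hence each $\alpha\in C$ lies in at least one $I(w_j)$, and by \autoref{keylem}, Part~\ref{c alphgreat a alph} the exponent of $X_\alpha$ in the product is at least $a_\alpha=\langle\lambda+\rho,\alpha\rangle$; summing over $\alpha\in C$ gives degree $\geqslant\sum_{\alpha\in C}a_\alpha=\deg X^\lambda(C)$. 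Equality forces the $I(w_j)$ to be pairwise disjoint with each $w_j\in\mathcal{I}$ --- otherwise \autoref{keylem}, Part~\ref{c bet gret a bet} contributes a strictly larger exponent --- and then \autoref{keylem}, Part~\ref{c alph eq a alph} identifies the product with $\prod_{\alpha\in C}X_\alpha^{a_\alpha}=X^\lambda(C)$. Hence every monomial of $\Theta_C(-\log U_i(\lambda))$ other than $X^\lambda(C)$ has degree strictly larger than $\deg X^\lambda(C)$.

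Finally, the coefficient of $X^\lambda(C)$ is the number $k(C)$ extracted in the proof of \autoref{mainprop}, namely $(-1)^{|C|}\sum_{k=1}^{|C|}(-1)^k c_k(\mathcal{C})/k$, which depends only on the graph $\mathcal{C}$ spanned by $C$ and, since $C$ is connected, is a positive integer (equal to $1$ by the remark after \autoref{mainprop}). Assembling these three observations yields the stated identity. I do not expect a genuine obstacle here: the proposition is essentially a bookkeeping reformulation of \autoref{mainprop}, and its one substantive point --- that $X^\lambda(C)$ is the unique lowest-degree monomial of $-\log U_i(\lambda)$ with full support $C$ --- follows at once from the inequalities $c_\alpha(w)\geqslant a_\alpha$ together with their refinement for $w\notin\mathcal{I}\cup\{1\}$ recorded in \autoref{keylem}.
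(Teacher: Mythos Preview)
Your proof is correct and follows the same route the paper indicates: the paper simply asserts that the proposition ``follows directly from \autoref{mainprop}'', and what you have written is precisely the natural unpacking of that claim, using \autoref{keylem} to justify the degree minimality that \autoref{mainprop} leaves implicit. The one substantive step you add --- that any monomial of $\Theta_C(-\log U_i(\lambda))$ other than $X^\lambda(C)$ must have strictly larger degree, via the inequalities $c_\alpha(w)\geqslant a_\alpha$ and the strict inequality for $w\notin\mathcal{I}\cup\{1\}$ --- is exactly the detail needed and is handled correctly.
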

	The following lemma gives conditions for which \(U_i(\lambda)\) and \(U_j(\mu)\) are equal.
 
	\begin{lemma}
		Let \(\lambda,\mu\) be typical dominant integral weights and let \(C_1,C_2\) be the two connected components of \(\Pi_0\). Then following statements are equivalent:
		\begin{enumerate}
			\item \(X^\lambda(C_i)=X^\mu(C_j)\),
			\item \(C_i=C_j\) and \(\langle\lambda,\alpha_k\rangle=\langle\mu,\alpha_k\rangle\) for every \(\alpha_k\in C_i\),
			\item \(U_i(\lambda)=U_j(\mu)\).
		\end{enumerate}
	\end{lemma}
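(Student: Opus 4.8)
The plan is to prove the three implications $(1)\Leftrightarrow(2)$, $(2)\Rightarrow(3)$, and $(3)\Rightarrow(1)$. The equivalence $(1)\Leftrightarrow(2)$ is a purely formal comparison of monomials; $(2)\Rightarrow(3)$ rests on the fact that the exponents occurring in $U_i$ depend only on the pairings $\langle\lambda+\rho,\gamma\rangle$, $\gamma\in C_i$; and $(3)\Rightarrow(1)$ is obtained by passing to formal logarithms and applying the operator $\Theta_{C_i}$, using \autoref{mainprop} and \autoref{lowestdeg}.

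\emph{$(1)\Leftrightarrow(2)$.} Since the $X_\alpha$ are independent formal variables and, by \eqref{weylvec}, $\langle\lambda+\rho,\alpha\rangle=\langle\lambda,\alpha\rangle+\langle\rho,\alpha\rangle=\langle\lambda,\alpha\rangle+1$ is a strictly positive integer for every even simple root $\alpha$, the support of $X^\lambda(C_i)=\prod_{\alpha\in C_i}X_\alpha^{\langle\lambda+\rho,\alpha\rangle}$ is exactly $C_i$, and likewise that of $X^\mu(C_j)$ is $C_j$. Hence $X^\lambda(C_i)=X^\mu(C_j)$ forces first $C_i=C_j$ (equality of supports) and then $\langle\lambda+\rho,\alpha\rangle=\langle\mu+\rho,\alpha\rangle$, i.e.\ $\langle\lambda,\alpha\rangle=\langle\mu,\alpha\rangle$, for every $\alpha\in C_i$; the reverse implication is immediate.

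\emph{$(2)\Rightarrow(3)$.} As $C_1$ and $C_2$ are the two distinct connected components of $\Pi_0$, the equality $C_i=C_j$ already forces $i=j$, so it suffices to show $U_i(\lambda)=U_i(\mu)$. Write $U_i(\lambda)=\sum_{u\in\W_i}(-1)^{\ell(u)}X(u,\lambda)$ with $X(u,\lambda)=\prod_{\alpha\in C_i}X_\alpha^{c_\alpha(u)}$. Exactly as in the proof of \autoref{sopr} and in the implication ``(c)$\Rightarrow$(b)'' of \autoref{mainlem}, for $u\in\W_i$ the exponents $c_\alpha(u)$ are determined solely by the integers $\langle\lambda+\rho,\gamma\rangle$ with $\gamma\in C_i$: expand $u$ in simple reflections from $C_i$ and induct on $\ell(u)$, using that $\W_i$ preserves the root subsystem spanned by $C_i$. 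By hypothesis these integers coincide for $\lambda$ and $\mu$, so $X(u,\lambda)=X(u,\mu)$ for every $u\in\W_i$, and summing yields $U_i(\lambda)=U_i(\mu)=U_j(\mu)$.

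\emph{$(3)\Rightarrow(1)$.} Apply $-\log$ to $U_i(\lambda)=U_j(\mu)$ and then $\Theta_{C_i}$. By \autoref{mainprop}, Part \ref{support}, every monomial occurring in $-\log U_j(\mu)$ has support contained in $C_j$. If $i\ne j$, then $C_i\cap C_j=\emptyset$ and $C_i\ne\emptyset$, so no such monomial has support equal to $C_i$, whence $\Theta_{C_i}(-\log U_j(\mu))=0$; but $\Theta_{C_i}(-\log U_i(\lambda))=k(C_i)X^\lambda(C_i)+(\text{higher degree})$ is nonzero because $k(C_i)\geqslant 1$ by \autoref{lowestdeg}, a contradiction. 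Hence $i=j$, and then \autoref{lowestdeg} identifies $X^\lambda(C_i)$, resp.\ $X^\mu(C_i)$, as the unique lowest-degree monomial appearing in $\Theta_{C_i}(-\log U_i(\lambda))$, resp.\ $\Theta_{C_i}(-\log U_i(\mu))$; comparing these equal power series gives $X^\lambda(C_i)=X^\mu(C_i)=X^\mu(C_j)$, which is $(1)$.

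The only delicate point is the bookkeeping in $(3)\Rightarrow(1)$: one must rule out the ``wrong component'' case $i\ne j$ and then extract the distinguished monomial from the logarithm. Both steps are short once \autoref{mainprop} and \autoref{lowestdeg} are available, the crucial inputs being that $-\log U_i(\lambda)$ is supported in a single component and that its leading monomial $X^\lambda(C_i)$ occurs with the nonzero coefficient $k(C_i)$ and has full support $C_i$ (the latter because $\langle\rho,\alpha\rangle=1$ for even simple roots).
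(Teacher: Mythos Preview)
Your proof is correct and follows essentially the same route as the paper: the formal monomial comparison for $(1)\Leftrightarrow(2)$, the observation that the exponents in $U_i$ depend only on the pairings $\langle\lambda+\rho,\gamma\rangle$ with $\gamma\in C_i$ for $(2)\Rightarrow(3)$, and the application of $-\log$ followed by $\Theta_{C_i}$ together with \autoref{mainprop} and \autoref{lowestdeg} for $(3)\Rightarrow(1)$. Your treatment is slightly more explicit in spelling out why the support of $X^\lambda(C_i)$ is exactly $C_i$ and in handling the case $i\ne j$ as a contradiction, but the argument is otherwise identical to the paper's.
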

    \begin{proof}
			Assume (1). Then all the variables with their exponents must be same; and as a consequence, their corresponding supports are equal, hence (2) follows from (1). 
   
   Now if $C_i=C_j$ we get that \(\W_i=\W_j\), where $\W_i$ (resp. $\W_j$) is the group generated by $s_{\alpha}$, $\alpha \in C_i$ (resp. $\alpha \in C_j$). By \autoref{sopr} the exponents of all the monomials appearing in \(U_i(\lambda)\) (resp. $U_j(\mu)$) depend only on the integers \(\langle\lambda+\rho,\alpha_k\rangle\) (resp. $\langle\mu+\rho,\alpha_k\rangle$) for all \(\alpha_k\in C_i=C_j\), whence (3) follows from (2). Finally, assume (3). After applying $-\log$ and then \(\Theta_{C_i}\) to both sides of the equation, we get that 
   \begin{gather}
       \Theta_{C_i}(-\log U_i(\lambda))=\Theta_{C_i}(-\log U_j(\mu)).\label{xlamxmu}
   \end{gather}
   By \autoref{lowestdeg} we know that \(X^\lambda(C_i)\) must occur in \(\Theta_{C_i}(-\log U_j(\mu))\) with nonzero coefficient. On the other hand, support of any monomial appearing in $-\log U_j(\mu)$ is a subset of \(C_j\) by \autoref{mainprop}, Part \ref{support}. This is possible only when \(C_i=C_j\).  Then using Proposition \ref{lowestdeg} once again we get that \(X^\lambda(C_i)=X^\mu(C_j)\).
		\end{proof}
	
	\section{The main theorem}\label{secmainthm}
	We have seen that \(U(\lambda)\) factorizes as a product: \(U(\lambda)=U_1(\lambda)\cdot U_2(\lambda)\) of two factors. Let \(\lambda_1,\dots,\lambda_r,\mu_1,\dots,\mu_s\) be typical dominant integral weights and suppose that the following equality holds:
	\begin{displaymath}
		U(\lambda_1)\cdots U(\lambda_r)=U(\mu_1)\cdots U(\mu_s).
	\end{displaymath}
	After factoring them further one obtains,
	\begin{gather}{\label{factorization}}
		U_1(\lambda_1)U_2(\lambda_1)\cdots U_1(\lambda_r)U_2(\lambda_r)=U_1(\mu_1)U_2(\mu_1)\cdots U_1(\mu_s)U_2(\mu_s).
	\end{gather}
 
	\begin{theorem}{\label{permfac}}
		With the notations as above, we have \(r=s\) and the factors in \eqref{factorization} are all equal up to a permutation, i.e., there exists a bijection \(\sigma\) of \(\{1,2\}\times\{1,\dots,r\}\) such that \(U_i(\lambda_p)=U_j(\lambda_q)\), where \(i,j\in\{1,2\}\), \(p,q\in\{1,\dots,r\}\) and \(\sigma(i,p)=(j,q)\).
	\end{theorem}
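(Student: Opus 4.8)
The plan is to turn the multiplicative identity \eqref{factorization} into an additive one via the formal logarithm, and then, for each of the two connected components $C_1,C_2$ of $\Pi_0$, to isolate the part of the identity that only involves that component by applying the projection $\Theta_{C_i}$. Each of the series $U(\lambda_p)$, $U(\mu_q)$, $U_i(\lambda_p)$, $U_j(\mu_q)$ has constant term $1$ (the summand $w=1$ contributes $1$ and, by \autoref{sopr}, every other summand contributes a monomial with no constant term), so $\log$ is defined on all of them and is additive on products. Applying $-\log$ to both sides of \eqref{factorization} and using the factorization $U(\lambda)=U_1(\lambda)U_2(\lambda)$ of \autoref{propulamfac} gives
\[
\sum_{p=1}^{r}\bigl(-\log U_1(\lambda_p)-\log U_2(\lambda_p)\bigr)=\sum_{q=1}^{s}\bigl(-\log U_1(\mu_q)-\log U_2(\mu_q)\bigr).
\]
Now apply the linear operator $\Theta_{C_1}$. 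By \autoref{mainprop}, Part \ref{support}, every monomial occurring in $-\log U_2(\lambda_p)$ or in $-\log U_2(\mu_q)$ has support contained in $C_2$, which is disjoint from the nonempty set $C_1$; hence $\Theta_{C_1}$ annihilates all the $U_2$-terms and leaves
\begin{equation}\label{eqreduced}
\sum_{p=1}^{r}\Theta_{C_1}\bigl(-\log U_1(\lambda_p)\bigr)=\sum_{q=1}^{s}\Theta_{C_1}\bigl(-\log U_1(\mu_q)\bigr).
\end{equation}

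The next step is to compare lowest-degree terms in \eqref{eqreduced} using \autoref{lowestdeg}: for each $p$ the series $\Theta_{C_1}(-\log U_1(\lambda_p))$ has the single monomial $k(C_1)X^{\lambda_p}(C_1)$, of degree $\sum_{\alpha\in C_1}\langle\lambda_p+\rho,\alpha\rangle$, as its unique term of minimal degree, with $k(C_1)\ne 0$, and similarly for the $\mu_q$'s. Assuming $r+s>0$, let $D$ be the minimum of all of these degrees. Taking the degree-$D$ homogeneous component of \eqref{eqreduced} annihilates every summand of leading degree $>D$ and, from each summand of leading degree equal to $D$, retains precisely its leading monomial; dividing by $k(C_1)$ we obtain an equality of formal sums of monomials
\[
\sum_{p:\,\deg X^{\lambda_p}(C_1)=D}X^{\lambda_p}(C_1)=\sum_{q:\,\deg X^{\mu_q}(C_1)=D}X^{\mu_q}(C_1),
\]
and comparing the coefficient of each monomial shows the two sides agree as multisets. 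In particular there are indices $p_0,q_0$ with $X^{\lambda_{p_0}}(C_1)=X^{\mu_{q_0}}(C_1)$, and by the last lemma of \autoref{secfacto} this is equivalent to $U_1(\lambda_{p_0})=U_1(\mu_{q_0})$, hence $\Theta_{C_1}(-\log U_1(\lambda_{p_0}))=\Theta_{C_1}(-\log U_1(\mu_{q_0}))$. Subtracting this common term from both sides of \eqref{eqreduced} yields an identity of the same shape with $r-1$ summands on the left and $s-1$ on the right, so induction on $r$ — the base case $r=0$ forcing $s=0$, since otherwise the degree-$D$ component of the right-hand side would be a nonzero polynomial (its monomials all have coefficient $k(C_1)$ and so cannot cancel) while the left-hand side is $0$ — gives $r=s$ together with a bijection $\sigma_1$ of $\{1,\dots,r\}$ such that $U_1(\lambda_p)=U_1(\mu_{\sigma_1(p)})$ for every $p$.

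Running the identical argument with $\Theta_{C_2}$ in place of $\Theta_{C_1}$ produces a bijection $\sigma_2$ of $\{1,\dots,r\}$ with $U_2(\lambda_p)=U_2(\mu_{\sigma_2(p)})$. Since $C_1\ne C_2$, the last lemma of \autoref{secfacto} shows that a factor of the form $U_1(\cdot)$ is never equal to one of the form $U_2(\cdot)$; hence any bijection $\sigma$ as in the statement is forced to preserve the first coordinate, and we may simply put $\sigma(1,p)=(1,\sigma_1(p))$ and $\sigma(2,p)=(2,\sigma_2(p))$, which has the required property. I expect the crux of the argument to be the lowest-degree comparison: one must be certain that after applying $\Theta_{C_1}$ the leading contribution of each factor is a single genuine monomial $X^{\lambda_p}(C_1)$ with nonzero coefficient, so that no cancellation among the factors can be overlooked (this is exactly \autoref{lowestdeg}), and that equality of two such leading monomials is equivalent to equality of the corresponding factors $U_i$ (this is the last lemma of \autoref{secfacto}). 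Granting these two inputs, what remains is the expected bookkeeping of multiplicities together with the induction.
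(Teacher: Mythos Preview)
Your proof is correct and follows essentially the same approach as the paper: apply $-\log$, project via $\Theta_{C_1}$ to kill the $U_2$-contributions, and match factors inductively by comparing lowest-degree monomials using \autoref{lowestdeg} and the last lemma of \autoref{secfacto}. Your organization is slightly more explicit than the paper's in that you treat the $C_1$-factors and $C_2$-factors separately to produce two permutations $\sigma_1,\sigma_2$ and then combine them, whereas the paper phrases the induction as cancelling one factor at a time from the full product; you also add the (correct, but not needed for the bare existence statement) observation that any valid $\sigma$ must preserve the first coordinate.
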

	\begin{proof}
		Applying \(-\log\) on both sides of Equation \eqref{factorization}, we obtain
		\begin{gather}
			\sum_{p=1}^{r}\sum_{i=1}^{2}-\log U_i(\lambda_p)=\sum_{q=1}^{s}\sum_{j=1}^{2}-\log U_j(\mu_q).\label{logsum}
			\intertext{Pick a connected component of \(\Pi_0\), say \(C_1\). We choose \(\lambda_k\) such that the monomial \(X^{\lambda_k}(C_1)\) is of minimal degree among all the monomials with support \(C_1\) in the left hand side of Equation \eqref{logsum}. Applying the operator \(\Theta_{C_1}\) to Equation \eqref{logsum} yields:}
			\Theta_{C_1}\left(\sum_{p=1}^{r}\sum_{i=1}^{2}-\log U_i(\lambda_p)\right)=\Theta_{C_1}\left(\sum_{q=1}^{s}\sum_{j=1}^{2}-\log U_j(\mu_q)\right).\label{avlogsum}
		\end{gather}
		By \autoref{mainprop}, the support of any monomial that appears in \(-\log U_i(\lambda_p)\) is contained in \(C_i\). Therefore, if \(i\neq 1\), then \(\Theta_{C_1}(-\log U_i(\lambda_p))=0\). On the other hand if \(i=1\), then \(X^{\lambda_p}(C_i)\) is the minimal degree monomial in \(\Theta_{C_1}(-\log U_i(\lambda_p))\) which is nonzero in this case. 
  
  Since the monomial \(X^{\lambda_k}(C_1)\) occurs in the left hand side of Equation \eqref{avlogsum}, by minimality of degree, it must appear in the right hand side with nonzero coefficient. In other words, there exists \(1\leqslant q\leqslant s\) and \(j\in\{1,2\}\) such that \(X^{\lambda_k}(C_1)=X^{\mu_q}(C_j)\). \autoref{mainlem} now gives credence to the conclusion that \(U_1(\lambda_k)=U_j(\mu_q)\). To obtain the desired claim we cancel \(U_1(\lambda_k)\ \text{and}\ U_j(\mu_q)\) in Equation \eqref{factorization} and proceed by induction. 
  
  To prove \(r=s\), we observe that the number of factors in the left hand side of Equation \eqref{factorization} is \(2r\) and that in the right hand side is \(2s\). So if \(r>s\), then on the left hand side we have a product of \(2(r-s)\) number of factors whereas in the right hand side we have $1$, which is a contradiction. So we conclude that \(r=s\).
	\end{proof}
 
	We now give a sufficient condition for which two tensor products of irreducible typical representations are isomorphic to each other.
	\begin{theorem}{\label{Thm:tnsrpdt}}
		 Let $\lambda_1,\dots,\lambda_r,\mu_1,\dots,\mu_s$ be typical dominant integral weights and assume that
		\begin{gather}{\label{tnsrpdt}}
			V(\lambda_1)\otimes\dots\otimes V(\lambda_r)\cong V(\mu_1)\otimes\dots\otimes V(\mu_s).
		\end{gather}
		Then \(r=s\). Suppose the bijection \(\sigma\) in \autoref{permfac} has the following additional property:
		if \(U_i(\lambda_p)=U_j(\mu_q)\), then \(\sigma(j,p)=(i,q)\) for all \(i,j\in\{1,2\}\), \(p,q\in\{1,\dots,r\}\) and \(\sigma(i,p)=(j,q)\). Then \(V(\lambda_p)\cong V(\mu_q)\) for all \(p,q\in\{1,\dots,r\}\).
	\end{theorem}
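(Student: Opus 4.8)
The plan is to turn \eqref{tnsrpdt} into an identity of products of normalized Weyl numerators and then feed it to \autoref{permfac}, using the extra hypothesis on $\sigma$ to make the factorwise matching descend to a matching of the tensor constituents. First I would use that a finite dimensional irreducible representation is determined by its character and that $\ch(V\otimes W)=\ch V\cdot\ch W$, so \eqref{tnsrpdt} is equivalent to $\prod_{p}\ch V(\lambda_p)=\prod_{q}\ch V(\mu_q)$; by \eqref{eqnnc} this reads
\[
e^{\lambda_1+\dots+\lambda_r}\,D^{\,r}\,U(\lambda_1)\cdots U(\lambda_r)=e^{\mu_1+\dots+\mu_s}\,D^{\,s}\,U(\mu_1)\cdots U(\mu_s).
\]
Each $U(\nu)$ is supported on $-\mathbb{Z}_{\geqslant 0}\Pi_0$ with top coefficient $1$ by \autoref{sopr}, and, by \eqref{d1d2}, $D=\prod_{\alpha\in\Phi^+_1}(1+e^{-\alpha})\big/\prod_{\alpha\in\Phi^+_0}(1-e^{-\alpha})$ is supported on $-\mathbb{Z}_{\geqslant 0}\Phi^+$ with top coefficient $1$. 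Comparing highest weights on the two sides forces $\lambda_1+\dots+\lambda_r=\mu_1+\dots+\mu_s$, and cancelling this (invertible) exponential leaves $D^{\,r}\prod_p U(\lambda_p)=D^{\,s}\prod_q U(\mu_q)$.

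The next step is to show $r=s$. Fix an odd simple root $\gamma\in\Pi_1$ (one exists for every basic classical simple Lie superalgebra that is not a Lie algebra). Since $\prod_p U(\lambda_p)$ and $\prod_q U(\mu_q)$ are supported on $\mathbb{Z}_{\geqslant 0}\Pi_0$ with constant term $1$, and $\gamma$ is not a non-negative combination of even simple roots, the coefficient of $e^{-\gamma}$ on the left side of the displayed identity equals its coefficient in $D^{\,r}$, and on the right side equals its coefficient in $D^{\,s}$; because $\gamma$ is simple it cannot be split as a sum of two nonzero positive weights, so all cross terms vanish and these coefficients are exactly $r$ and $s$. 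Hence $r=s$. As $D$ is a unit (its numerator and denominator are products of the invertible elements $1\pm e^{-\alpha}$), we cancel $D^{\,r}$ to get $U(\lambda_1)\cdots U(\lambda_r)=U(\mu_1)\cdots U(\mu_s)$, and factoring each $U(\nu)=U_1(\nu)U_2(\nu)$ via \autoref{propulamfac} turns this into precisely Equation \eqref{factorization}.

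Now \autoref{permfac} supplies a bijection $\sigma$ of $\{1,2\}\times\{1,\dots,r\}$ with $U_i(\lambda_p)=U_j(\mu_q)$ whenever $\sigma(i,p)=(j,q)$. If $\Pi_0$ is connected (types $A(n,0)$, $B(0,n)$, $C_n$, $F(4)$, $G(3)$) there is a single factor, so \autoref{permfac} already gives a permutation $\pi$ of $\{1,\dots,r\}$ with $U(\lambda_p)=U(\mu_{\pi(p)})$ and the extra hypothesis is vacuous. If $\Pi_0$ has two components, a short bookkeeping argument from the additional hypothesis shows that $\sigma$ must carry the pair of factor-slots $\{(1,p),(2,p)\}$ attached to $\lambda_p$ onto the pair $\{(1,q),(2,q)\}$ attached to a single $\mu_q$; the resulting assignment $p\mapsto q$ is a well-defined self-map $\pi$ of $\{1,\dots,r\}$, injective because $\sigma$ is, hence a permutation, and $U(\lambda_p)=U_1(\lambda_p)U_2(\lambda_p)=U_1(\mu_{\pi(p)})U_2(\mu_{\pi(p)})=U(\mu_{\pi(p)})$. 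In both cases \autoref{mainlem} upgrades $U(\lambda_p)=U(\mu_{\pi(p)})$ to $V(\lambda_p)\cong V(\mu_{\pi(p)})$ for every $p$, which is the asserted unique factorization.

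The delicate point is this last step. \autoref{permfac} only pairs up the $2r$ individual Weyl-numerator factors, and without the hypothesis on $\sigma$ the two factors of one $V(\lambda_p)$ can be matched to factors belonging to two different $V(\mu_q)$'s — this genuinely occurs, as \autoref{example} shows, so no unconditional statement is available in the disconnected case. The hypothesis on $\sigma$ is exactly the combinatorial condition that forbids such crossing and lets the factorwise bijection of \autoref{permfac} descend to a bijection of tensor constituents. By comparison, the earlier steps are routine: the only mild points there are that $D$ is a unit and that $\gamma$ being a simple root annihilates every unwanted cross term in the coefficient-of-$e^{-\gamma}$ computation.
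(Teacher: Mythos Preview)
Your proof is correct. The overall architecture---passing to characters, equating highest weights, reducing to an identity of normalized Weyl numerators, and then invoking \autoref{permfac} together with \autoref{mainlem}---matches the paper, and your handling of the extra hypothesis on $\sigma$ (forcing the two factor-slots of each $\lambda_p$ to land on the two factor-slots of a single $\mu_q$) is the same idea the paper uses, stated with a bit more care.

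Where you genuinely diverge is in the proof that $r=s$. The paper argues by contradiction: assuming $r>s$ it carries the extra factor $D^{r-s}$ into Equation~\eqref{spltfac} and then says one can ``argue as in the proof of \autoref{permfac}'' to cancel factors one by one until a contradiction appears. Your route is more direct and avoids having to run the $\Theta_C$/$-\log$ machinery in the presence of $D^{r-s}$: you read off the coefficient of $e^{-\gamma}$ for an odd simple root $\gamma$. Since each $U(\nu)$ is supported on $-\mathbb{Z}_{\geqslant0}\Pi_0$ and $\gamma\notin\mathbb{Z}_{\geqslant0}\Pi_0$, the only contribution on either side comes from $D^r$ (resp.\ $D^s$), and indecomposability of a simple root in $\mathbb{Z}_{\geqslant0}\Pi$ gives these coefficients as $r$ and $s$. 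This is a clean, self-contained argument; it also makes transparent that the equality of the two $U$-products (your Equation~\eqref{factorization}) holds on the nose once $r=s$, since $D$ is a unit. The paper's approach has the virtue of reusing the cancellation mechanism already built for \autoref{permfac}, but your coefficient comparison is both shorter and conceptually clearer.
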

	\begin{proof}
		To prove \(r=s\), we assume on contrary that \(r>s\). The maximal weights that occur on both sides of the isomorphism \eqref{tnsrpdt} are equal, i.e, \(\sum_{i=1}^{r}\lambda_i=\sum_{j=1}^{s}\mu_j\coloneqq\gamma,\) say. Taking formal character on both sides of Equation \eqref{tnsrpdt} yields:
		\begin{gather}
		   \ch V(\lambda_1)\cdots\ch V(\lambda_r)=\ch V(\mu_1)\cdots\ch V(\mu_s).\notag
            \intertext{Multiplying both sides of the above equation by $e^{-\gamma}$ and grouping the corresponding highest weights we obtain:}
            \prod_{i=1}^{r}e^{-\lambda_i}\cdot\ch V(\lambda_i)=\prod_{j=1}^{s}e^{-\mu_j}\cdot\ch V(\mu_j).\notag
            \intertext{By Equation \eqref{eqnnc}, this simplifies to}
            U(\lambda_1)\cdots U(\lambda_r)D^{r-s}=U(\mu_1)\cdots U(\mu_s),\quad\text{where $D=\tfrac{\prod_{\alpha\in\Phi^+_1}(1+e^{-\alpha})}{\prod_{\alpha\in\Phi^+_0}(1-e^{-\alpha})}$}.\notag
            \intertext{As every $U(\lambda_k)$ (resp. $U(\mu_l)$) is a product of two factors, we get that:}
			U_1(\lambda_1)U_2(\lambda_1)\cdots U_1(\lambda_r)U_2(\lambda_r)D^{r-s}=U_1(\mu_1)U_2(\mu_1)\cdots U_1(\mu_s)U_2(\mu_s).\label{spltfac}
		\end{gather}
		By \autoref{permfac}, we conclude that $U_i(\lambda_p)=U_j(\mu_q)$, for some $j\in\{1,2\}$ and $q\in\{1,\dots,s\}$. Therefore, arguing as in the proof of \autoref{permfac} we get a contradiction. Hence \(r=s\). 
		
		Now suppose the bijection \(\sigma\) satisfies the condition mentioned in the hypotheses of the theorem. Then we get that \[\prod_{i=1}^{2}U_i(\lambda_p)=\prod_{j=1}^{2}U_j(\mu_q).\]
		This implies that \(U(\lambda_p)=U(\mu_q)\).
		By \autoref{mainlem}, we get that \(V(\lambda_p)\cong V(\mu_q)\). 
	\end{proof}

 The following corollary says that the unique factorization of tensor products of irreducible finite dimensional typical representations holds for superalgebras of types \(B(0,n),C_n,F(4)\) and \(G(3)\).
 
	\begin{corollary}\label{unique-coro}
		Let \g be of type \(A(n,0),B(0,n),C_n,F(4)\) or \(G(3)\) and let \(\lambda_1,\dots,\lambda_r,\mu_1,\dots,\mu_s\) be typical dominant integral weights of \g . Suppose we are given an isomorphism of representations: 
		\begin{gather}{\label{eqtnsrprdt}}
			V(\lambda_1)\otimes\dots\otimes V(\lambda_r)\cong V(\mu_1)\otimes\dots\otimes V(\mu_s).
		\end{gather}
		 Then $r=s$, and there is a permutation \(\sigma\) of \(\{1,\dots,r\}\) such that \(V(\lambda_k)\cong V(\mu_{\sigma(k)})\) for all $k$.
	\end{corollary}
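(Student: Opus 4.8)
The key observation is that for each of the types $A(n,0)$, $B(0,n)$, $C_n$, $F(4)$ and $G(3)$ the subgraph spanned by $\Pi_0$ is \emph{connected}, so $\Pi_0$ is its own unique connected component. In the notation of \autoref{secfacto} this means $C_1=\Pi_0$ and $U(\lambda)=U_1(\lambda)$ is a single factor, with no $U_2$-part at all. The plan is to run the arguments of \autoref{permfac} and \autoref{Thm:tnsrpdt} in this degenerate situation, where the index $i\in\{1,2\}$ is frozen at $1$ and the additional hypothesis of \autoref{Thm:tnsrpdt} — the one shown to be genuinely necessary for other types in \autoref{example} — becomes vacuous.

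First I would pass from the isomorphism \eqref{eqtnsrprdt} to an identity of normalized Weyl numerators exactly as in the proof of \autoref{Thm:tnsrpdt}: taking formal characters, multiplying by $e^{-\gamma}$ where $\gamma=\sum_{i=1}^{r}\lambda_i=\sum_{j=1}^{s}\mu_j$ is the common maximal weight, and invoking \eqref{eqnnc}, one obtains
\[
U(\lambda_1)\cdots U(\lambda_r)\,D^{\,r-s}=U(\mu_1)\cdots U(\mu_s),\qquad D=\frac{\prod_{\alpha\in\Phi^+_1}(1+e^{-\alpha})}{\prod_{\alpha\in\Phi^+_0}(1-e^{-\alpha})}.
\]
That $r=s$ follows from \autoref{Thm:tnsrpdt} — equivalently, by the minimal-degree monomial argument of that proof applied to the displayed identity — so we may assume $r=s$ and are left with $U(\lambda_1)\cdots U(\lambda_r)=U(\mu_1)\cdots U(\mu_r)$.

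Next I would match the factors. Applying $-\log$ gives $\sum_{p=1}^{r}-\log U(\lambda_p)=\sum_{q=1}^{r}-\log U(\mu_q)$, to which I apply the operator $\Theta_{\Pi_0}$. Choose an index $k$ for which $\deg X^{\lambda_k}(\Pi_0)=\sum_{\alpha\in\Pi_0}\langle\lambda_k+\rho,\alpha\rangle$ is minimal among the left-hand summands; by \autoref{lowestdeg} the monomial $X^{\lambda_k}(\Pi_0)$ then occurs on the left with nonzero coefficient and is of minimal degree there, so it must also occur on the right, forcing $X^{\lambda_k}(\Pi_0)=X^{\mu_q}(\Pi_0)$ for some $q$. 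By the last lemma of \autoref{secfacto} this is equivalent to $U(\lambda_k)=U(\mu_q)$, hence to $\lambda_k=\mu_q$ and to $V(\lambda_k)\cong V(\mu_q)$ by \autoref{mainlem}. Cancelling the common factor from $U(\lambda_1)\cdots U(\lambda_r)=U(\mu_1)\cdots U(\mu_r)$ and iterating produces a permutation $\sigma$ of $\{1,\dots,r\}$ with $V(\lambda_p)\cong V(\mu_{\sigma(p)})$ for every $p$, which is the desired conclusion.

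The only real work is bookkeeping. \autoref{permfac} and \autoref{Thm:tnsrpdt} are phrased for a $\Pi_0$ with two connected components, so one must check that their proofs transcribe verbatim to the one-component case; they do, the bijection $\sigma$ of \autoref{permfac} now living on $\{1\}\times\{1,\dots,r\}$, i.e.\ being an ordinary permutation of $\{1,\dots,r\}$. The conceptual point — and the reason no extra hypothesis is needed here, in contrast to \autoref{Thm:tnsrpdt} — is simply that when $\Pi_0$ is connected each module $V(\lambda_p)$ contributes a \emph{single} Weyl-numerator factor $U(\lambda_p)$, so an equality of factors is already an equality of modules, and the independent reshuffling of the two halves of a $U(\lambda)$ that occurs for the remaining types cannot happen.
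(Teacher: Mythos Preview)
Your proposal is correct and follows essentially the same route as the paper: observe that for these types $\Pi_0$ is connected (the odd simple root sits at an end of the standard Dynkin diagram), so each $U(\lambda)$ is a single factor and the permutation argument of \autoref{permfac} together with \autoref{mainlem} gives the result directly, without the extra hypothesis of \autoref{Thm:tnsrpdt}. The only addition in the paper is a brief alternate proof for type $B(0,n)$ via the character identification with non-spinorial $\mathfrak{so}(2n+1)$-modules and Rajan's original theorem.
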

	\begin{proof}
		By \autoref{Thm:tnsrpdt}, we have $r=s$. Proceeding as in its proof we further obtain that
		\begin{gather}
			U(\lambda_1)\cdots U(\lambda_r)=U(\mu_1)\cdots U(\mu_r)
		\end{gather}
		The standard Dynkin diagrams of these types are given in ~\cite{MR0519631}*{Table 1, page 606}. Evidently \(\Pi_0\) is connected in all of these cases as the node corresponding to the odd simple root appears at the edge of the diagrams. So removal of this node will not render the diagram disconnected. This means that \(U_2(\lambda_k)=1\) and \(U_2(\mu_l)=1\) for all $k,l\in\{1,\dots,r\}$. Then the corollary follows from \autoref{permfac} and \autoref{mainlem}.
	    
		For type \(B(0,n)\) an alternate proof can be given. It is known that if \(\lambda\) is the highest weight of an irreducible representation of \(\mathfrak{osp}(1,2n)\), then it is also the highest weight of a non spinorial irreducible representation of \(\mathfrak{so}(2n+1)\); and characters of both the representations are same. For details we refer to ~\cite{MR0648354}. Taking formal character on both sides of the isomorphism \eqref{eqtnsrprdt} yields:
		\begin{displaymath}
			\ch V(\lambda_1)\cdots\ch V(\lambda_r)=\ch V(\mu_1)\cdots\ch V(\mu_r).
		\end{displaymath}
		The above discussion shows that this equality can be considered an equality for \(\mathfrak{so}(2n+1)\)-modules. By ~\cite{MR2123935}*{Theorem 1}, this implies that \(V(\lambda_i)\cong V(\mu_{\sigma(i)})\) as \(\mathfrak{so}(2n+1)\)-modules for some permutation \(\sigma\) of \(\{1,\dots,r\}\); so $\ch V(\lambda_i)=\ch V(\mu_{\sigma(i)})$. Hence \(V(\lambda_i)\cong V(\mu_{\sigma(i)})\) as \(\mathfrak{osp}(1,2n)\)-modules.
	\end{proof}

    The example below illustrates that the additional hypothesis on the bijection $\sigma$  in \autoref{Thm:tnsrpdt} is essential for the conclusion of unique factorization of the tensor products.
 
	\begin{example}\label{example}
		We take \(\g=\mathfrak{sl}(3,2)\). In this case \(\Pi_0=\{\alpha_1,\alpha_2,\alpha_3\}\), and \(C_1=\{\alpha_1,\alpha_2\}\), \(C_2=\{\alpha_3\}\) are the two connected components of \(\Pi_0\). The subgroups of the Weyl group of \g generated by \(C_1\) and \(C_2\) are \(\W_1=\{1,s_1,s_2,\ s_1s_2,\ s_2s_1,\ s_1s_2s_1\}\) and \(\W_2=\{1,s_3\}\) respectively. Here \(s_i\) is the simple reflection corresponding to \(\alpha_i\). The set of all positive odd roots \(\Phi_1^+\) is given by:
		\begin{displaymath}
			\Phi_1^+=\{\varepsilon_i-\delta_j\colon 1\leqslant i\leqslant 3;\ 1\leqslant j\leqslant2\},
		\end{displaymath}
		where \(\varepsilon_i(\text{diag}(a_1,a_2,a_3))=a_i\) and \(\delta_j(\text{diag}(b_1,b_2))=b_j\) for all $i,j$.
  
 The sum of all elements in \(\Phi_1^+\) is \(\tau\coloneq 2(\varepsilon_1+\varepsilon_2+\varepsilon_3)-3(\delta_1+\delta_2) \in\h^*\). Here all odd roots are isotropic. We have that \((\tau,\varepsilon_i-\delta_j)=5\). Consider now the following weights:
		\begin{displaymath}
			\begin{gathered}
				\lambda_1=\omega_1+2\omega_2+3\omega_3+\tau\\
				\lambda_2=\omega_1+4\omega_2+5\omega_3+\tau
			\end{gathered}
			\qquad\text{and}\qquad
			\begin{gathered}
				\mu_1=\omega_1+4\omega_2+3\omega_3+\tau\\
				\mu_2=\omega_1+2\omega_2+5\omega_3+\tau
			\end{gathered}
		\end{displaymath}
		where \(\omega_i\) is the fundamental dominant weight corresponding to \(\alpha_i\). All of these weights are typical dominant integral (cf. \autoref{rmkdominant}). After computing and factoring the normalized Weyl numerators we find:
		\begin{displaymath}
			\begin{gathered}
				U_1(\lambda_1)=1-X_{\alpha_1}^2-X_{\alpha_2}^3+X_{\alpha_1}^5X_{\alpha_2}^3 +X_{\alpha_1}^2X_{\alpha_2}^5-X_{\alpha_1}^5X_{\alpha_2}^5\\
				U_1(\lambda_2)=1-X_{\alpha_1}^2-X_{\alpha_2}^5+X_{\alpha_1}^7X_{\alpha_2}^5+X_{\alpha_1}^2X_{\alpha_2}^7-X_{\alpha_1}^7X_{\alpha_2}^7\\
                U_1(\mu_1)=1-X_{\alpha_1}^2-X_{\alpha_2}^5+X_{\alpha_1}^7X_{\alpha_2}^5+X_{\alpha_1}^2X_{\alpha_2}^7-X_{\alpha_1}^7X_{\alpha_2}^7\\
				U_1(\mu_2)=1-X_{\alpha_1}^2-X_{\alpha_2}^3+X_{\alpha_1}^5X_{\alpha_2}^3+X_{\alpha_1}^2X_{\alpha_2}^5-X_{\alpha_1}^5X_{\alpha_2}^5
			\end{gathered}
                \qquad\text{and}\qquad
                \begin{gathered}
				U_2(\lambda_1)=1-X_{\alpha_3}^4\\				
				U_2(\lambda_2)=1-X_{\alpha_3}^6\\
				U_2(\mu_1)=1-X_{\alpha_3}^4\\				
				U_2(\mu_2)=1-X_{\alpha_3}^6
			\end{gathered}
		\end{displaymath}
		Evidently,
		\begin{align*}
			U_1(\lambda_1)U_2(\lambda_1)U_1(\lambda_2)U_2(\lambda_2)&=U_1(\mu_1)U_2(\mu_1)U_1(\mu_2)U_2(\mu_2)\\
			\Rightarrow U(\lambda_1)U(\lambda_2)&=U(\mu_1)U(\mu_2).
			\intertext{Using the fact that $\lambda_1+\lambda_2=\mu_2+\mu_2$, and multiplying both sides by $D_1/D_2$ (cf. Equation \eqref{d1d2}) we obtain}
			\ch V(\lambda_1)\ch V(\lambda_2)&=\ch V(\mu_1)\ch V(\mu_2).
			\intertext{From here we get that}
			V(\lambda_1)\otimes V(\lambda_2)&\cong V(\mu_1)\otimes V(\mu_2).
		\end{align*}
		So, unique decomposition of tensor products does not hold.
        \end{example}

        \section{Atypical representations}\label{sec atyp}
         \subsection{} In this section we focus on some atypical representations of $\mathfrak{sl}(m+1,n+1),\ C_{n+1}=\mathfrak{osp}(2,2n),\ G(3)$ and $F(4)$ for which a character formula is known in the literature. A weight $\lambda$ is called \emph{singly} atypical if there is a unique \(\gamma\in\Phi_1^+\) such that \((\lambda+\rho,\gamma)=0\). In this case, $\lambda$ is said to have atypicality type $\gamma$. For $\g=\mathfrak{sl}(m+1,n+1)$, the class of singly atypical finite dimensional irreducible \g-modules admit a character formula closely resembling that of the typical ones. Details can be found in ~\cite{MR1063989}. It is known that any dominant integral weight of type $C_{n+1}$, and the exceptional Lie superalgebras $F(4)$ and $G(3)$ is either typical or singly atypical; and a character formula for singly atypical finite dimensional irreducible representations for type $C_n$ can be found in ~\cite{MR1092559}. The same for the exceptional Lie superalgebras is obtained in ~\cite{MR3253284}.

         Our goal of this section is to establish unique decomposition of tensor products of singly atypical finite dimensional irreducible modules for the above mentioned Lie superalgebras. Let \g be any such Lie superalgebra and $\lambda\in\h^*$ be a dominant integral singly atypical weight of type $\beta$. For $G(3)$ and $F(4)$, we first assume that $\lambda\neq \lambda_1,\lambda_2$ where $\lambda_1$ and $\lambda_2$ are the special weights (\textit{see} ~\cite{MR3253284}*{Theorem 2.6} for a description of these two weights). Then from ~\cites{MR1063989,MR1092559,MR3253284}, the character formula for $V(\lambda)$ is given by:
          \begin{align}
            \ch V(\lambda) &=\frac{D_1}{D_2}\sum_{w\in\W} (-1)^{\lw} \frac{1}{1+e^{-w\beta}}\cdot e^{w(\lambda+\rho)}.\label{char}
            \intertext{When $\lambda$ is either $\lambda_1$ or $\lambda_2$ for type $G(3)$ and $F(4)$, the character formula for $V(\lambda)$ is given by:}
            \ch V(\lambda) &=\frac{D_1}{D_2} \sum_{w\in\W}\frac{(-1)^{\lw}}{2}\frac{2+e^{-w\beta}}{1+e^{-w\beta}}\cdot e^{w(\lambda+\rho)},\label{char lam1 lam2}
            \end{align}
            where \(D_1=\prod_{\alpha\in\Phi^+_1}(e^{\alpha/2}+e^{-\alpha/2})\) and \(D_2=\prod_{\alpha\in\Phi^+_0}(e^{\alpha/2}-e^{-\alpha/2})\).
            
            As in \autoref{secprelims}, we define the normalized Weyl numerator $U(\lambda)$ for $\lambda\neq \lambda_1,\lambda_2$ by:
            \begin{gather}
                U(\lambda)\coloneqq \sum_{w\in\W} \frac{(-1)^{\lw} e^{w(\lambda+\rho)-(\lambda+\rho)}}{1+e^{-w\beta}}=\sum_{w\in\W}(-1)^{\lw}\frac{1}{1+e^{-w\beta}}\cdot X(\lambda,w).
                \intertext{For $\lambda=\lambda_1,\lambda_2$ in type $G(3)$ and $F(4)$, we define:}
                U(\lambda)\coloneqq \sum_{w\in\W}\frac{(-1)^{\lw}}{2}\frac{2+e^{-w\beta}}{1+e^{-w\beta}}\cdot X(\lambda,w)
            \end{gather}
            where $X_\alpha=e^{-\alpha}$ for $\alpha\in\Pi_0$, and $X(\lambda,w)=\prod_{\alpha\in\Pi_0}X_{\alpha}^{c_\alpha(w)}=e^{w(\lambda+\rho)-(\lambda+\rho)}$. 
            
            The definition of the normalized character is same as before. Equation \eqref{eqnnc} remains valid here as well. Let us recall that the monomial $X^\lambda(\Pi_0)$ is defined by $X^\lambda(\Pi_0)\coloneq\prod_{\alpha\in \Pi_0}X_\alpha^{\langle\lambda+\rho,\alpha\rangle}$ (\textit{see} \vpageref{reg monomial}). As before, our primary task is to show that the coefficient of this monomial in the expansion of $-\log U(\lambda)$ is nonzero. We achieve this by means of case by case consideration. First we record the following lemma that shows $X^\lambda(\Pi_0)$ does determine the representation $V(\lambda)$.

            \begin{notation}
            In what follows we denote the monomial $X^\lambda(\Pi_0)$ by just $X^\lambda$ for every $\lambda\in\h^*$.
        \end{notation}

        \begin{lemma}{\label{ x lam to u lam atyp}}
            Let \g be any Lie superalgebra among $\mathfrak{sl}(m+1,n+1),\ \mathfrak{osp}(2,2n),\ G(3)$, or $F(4)$. Let $\lambda,\mu\in\h^*$ be dominant integral singly atypical weights of the same type, say $\gamma$. Then the following statements are equivalent:
           
            \begin{enumerate*}[label=(\alph*)]
                \item $X^{\lambda}=X^{\mu}$,
                \item $\lambda=\mu$,
                \item $U(\lambda)=U(\mu)$.
            \end{enumerate*}
        \end{lemma}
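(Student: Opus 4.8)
The plan is to run the cycle of implications with the two implications out of \((b)\) being immediate and the substance lying in \((a)\Rightarrow(b)\) and, above all, \((c)\Rightarrow(b)\). For the easy half: if \(\lambda=\mu\) then \(X^{\lambda}=X^{\mu}\) at once, and since \(\lambda,\mu\) share the atypicality type \(\gamma\) while \(\rho\) is fixed, every term of the defining sum of \(U(\lambda)\) — whether \(\lambda\) is a generic singly atypical weight or one of the special weights \(\lambda_1,\lambda_2\) of \(G(3)\) or \(F(4)\) — agrees with the corresponding term for \(\mu\); hence \((b)\) implies \((a)\) and \((c)\).

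For \((a)\Rightarrow(b)\): since the \(X_\alpha\), \(\alpha\in\Pi_0\), are algebraically independent, \(X^{\lambda}=X^{\mu}\) forces \(\langle\lambda+\rho,\alpha\rangle=\langle\mu+\rho,\alpha\rangle\), hence \((\lambda,\alpha)=(\mu,\alpha)\), for every \(\alpha\in\Pi_0\); and atypicality of type \(\gamma\) on both sides gives \((\lambda,\gamma)=-(\rho,\gamma)=(\mu,\gamma)\). Now \(\Pi_0\) is already a basis of \(\h^*\) for \(G(3)\) and \(F(4)\) (here \(|\Pi_0|=\operatorname{rank}\g\)), while for \(\mathfrak{sl}(m+1,n+1)\) and \(\mathfrak{osp}(2,2n)\) the set \(\Pi_0\cup\{\gamma\}\) spans \(\h^*\) by a dimension count, since there an odd root is never a linear combination of the even simple roots. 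As the form on \(\h^*\) is nondegenerate, \(\lambda-\mu\) annihilates a spanning set, so \(\lambda=\mu\).

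For \((c)\Rightarrow(b)\), the main point: rather than manipulate \(U(\lambda)\) directly — the factors \((1+e^{-w\gamma})^{-1}\), and \(\tfrac{1}{2}(2+e^{-w\gamma})(1+e^{-w\gamma})^{-1}\) in the \(\lambda_1,\lambda_2\) case, make it far less transparent than its typical counterpart — I would pass to the character. Equation \eqref{eqnnc} remains valid, so \(\chi_\lambda=D\cdot U(\lambda)\), whence \(U(\lambda)=U(\mu)\) gives \(\chi_\lambda=\chi_\mu\) and therefore \(\ch V(\lambda)=e^{\lambda-\mu}\,\ch V(\mu)\). Now \(\ch V(\lambda)\) and \(\ch V(\mu)\) are \(\W\)-invariant, being formal characters of finite dimensional modules over the reductive Lie algebra \(\g_0\). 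Applying \(w\in\W\) to the identity \(\ch V(\lambda)=e^{\lambda-\mu}\ch V(\mu)\) and using \(\W\)-invariance of both characters gives \(\bigl(w(e^{\lambda-\mu})-e^{\lambda-\mu}\bigr)\ch V(\mu)=0\) in the group algebra \(\mathbb{Z}[\h^*]\); since this ring is an integral domain and \(\ch V(\mu)\neq 0\), we get \(w(e^{\lambda-\mu})=e^{\lambda-\mu}\) for all \(w\in\W\), i.e. \(\lambda-\mu\in(\h^*)^{\W}\). Finally \((\h^*)^{\W}\) has dimension at most one: in the two cases where \(\g_0\) has a one-dimensional center — \(\mathfrak{sl}(m+1,n+1)\) and \(\mathfrak{osp}(2,2n)\) — it is spanned by a vector (\(\sum_i\varepsilon_i\), respectively \(\varepsilon\)) that is not orthogonal to \(\gamma\), and it is \(0\) otherwise; combined with \((\lambda-\mu,\gamma)=0\), which follows from both weights being atypical of type \(\gamma\), this forces \(\lambda=\mu\).

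The delicate step is \((c)\Rightarrow(b)\): unlike in the proof of \autoref{mainlem}, the atypical numerator \(U(\lambda)\) does not expose the integers \(\langle\lambda+\rho,\alpha\rangle\) through a single monomial \(X_\alpha^{\langle\lambda+\rho,\alpha\rangle}\), so the typical-case argument cannot be copied. Routing everything through \(\W\)-invariance of \(\ch V(\lambda)\) bypasses this, leaving only a short check on the root data — the shape of \((\h^*)^{\W}\) and the non-orthogonality of its generator to \(\gamma\) — which, pleasantly, handles the special weights \(\lambda_1,\lambda_2\) of \(G(3)\) and \(F(4)\) on exactly the same footing as the generic ones, since it never uses the precise form of \(U(\lambda)\).
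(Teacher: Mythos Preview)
Your argument is essentially sound, but there is a factual slip in \((a)\Rightarrow(b)\): for \(G(3)\) and \(F(4)\), \(\Pi_0\) is \emph{not} a basis of \(\h^*\). One has \(|\Pi_0|=2\) for \(G(3)\) (while \(\dim\h^*=3\)) and \(|\Pi_0|=3\) for \(F(4)\) (while \(\dim\h^*=4\)); the \(\delta\)-direction, which carries the \(\mathfrak{sl}(2)\) factor of \(\g_0\), lies outside \(\operatorname{span}(\Pi_0)\). The repair is immediate and already implicit in what you wrote for the Type~I cases: use the atypicality relation \((\lambda-\mu,\gamma)=0\) here as well, and observe that \(\Pi_0\cup\{\gamma\}\) spans \(\h^*\) in all four types, since every isotropic \(\gamma\) has a nonzero \(\delta\)-component.

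For \((c)\Rightarrow(b)\) your route is correct but genuinely different from the paper's, and your premise that the typical-case argument ``cannot be copied'' is mistaken. The paper does exactly that: viewing \(U(\lambda)\) as a sum over \(\W\) with coefficients in \(\mathcal{F}=\mathbb{C}[[Z_\beta:\beta\in\Phi_1]]\), the summand \(w=s_\alpha\) for \(\alpha\in\Pi_0\) is the unique one producing a pure monomial \(X_\alpha^m\) with \(m>0\), namely \(X_\alpha^{\langle\lambda+\rho,\alpha\rangle}\), with coefficient \(-(1+e^{-s_\alpha\gamma})^{-1}\) (respectively \(-\tfrac{1}{2}(2+e^{-s_\alpha\gamma})(1+e^{-s_\alpha\gamma})^{-1}\) for \(\lambda_1,\lambda_2\)). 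Since \(\lambda\) and \(\mu\) share the type \(\gamma\), these \(\mathcal{F}\)-coefficients match on both sides and one reads off \(\langle\lambda+\rho,\alpha\rangle=\langle\mu+\rho,\alpha\rangle\) for all \(\alpha\in\Pi_0\), then finishes as in \autoref{mainlem}. Your \(\W\)-invariance argument, by contrast, bypasses any inspection of \(U(\lambda)\): it is cleaner in that it treats the generic and special weights uniformly and never touches the \(\mathcal{F}\)-coefficients, at the modest cost of invoking the structure of \((\h^*)^{\W}\) case by case. Either approach must in the end appeal to the same-type hypothesis via \((\lambda-\mu,\gamma)=0\) to pin down the component of \(\lambda-\mu\) outside \(\operatorname{span}(\Pi_0)\).
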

        \begin{proof}
            Assume (a). By definition, this means that $\langle\lambda+\rho,\alpha\rangle=\langle\mu+\rho,\alpha\rangle$, for every $\alpha\in\Pi_0$. Now an argument as in the proof of \autoref{mainlem} gives us $\lambda=\mu$. If we have $\lambda=\mu$ to begin with, then $X^\lambda=X^\mu$ follows form the definition. This proves the equivalence of (a) and (b).

            Suppose (b) is given. Then (c) follows from definition. Now we assume $U(\lambda)=U(\mu)$ where $\lambda,\mu\neq\lambda_1,\lambda_2$ in case when $\g=G(3),F(4)$. For any $\alpha\in\Pi_0$, $X_\alpha^{\langle\lambda+\rho,\alpha\rangle}$ (resp. $X_\alpha^{\langle\mu+\rho,\alpha\rangle}$) is the only monomial of the form $X_\alpha^m$ in $U(\lambda)$ (resp. $U(\mu)$) with coefficient $(1+e^{-\gamma})^{-1}$. This gives that $(1+e^{-\gamma})^{-1}X_\alpha^{\langle\lambda+\rho,\alpha\rangle}=(1+e^\gamma)^{-1}X_\alpha^{\langle\mu+\rho,\alpha\rangle}$ for all even simple roots $\alpha$. So we have $\langle\lambda+\rho,\alpha\rangle=\langle\mu+\rho,\alpha\rangle$ for all $\alpha\in\Pi_0$. Now arguing as in the proof of \autoref{mainlem}, we conclude that $\lambda=\mu$. This shows the equivalence of (b) and (c).

            If $\lambda,\mu$ are among the special weights $\lambda_1,\lambda_2$ for $G(3)$ and $F(4)$, then the above proof works with $(1+e^{-\gamma})^{-1}$ replaced by $\frac{1}{2}(2+e^{-\gamma})(1+e^{-\gamma})^{-1}$.
        \end{proof}

        \begin{remark}
            Notice that omission of the additional condition on the highest weights having same atypicality type in the above lemma poses a technical difficulty. Indeed, suppose $\lambda$ is of type $\gamma_1$ and $\mu$ is of type $\gamma_2$ with $\gamma_1\neq\gamma_2$. Then from $X^\lambda=X^\mu$, we only get equality of numerators of the term corresponding to $w_i\in\W$ in $U(\lambda)$ and $U(\mu)$. Denominator of the term corresponding to $w_i$ in $U(\lambda)$ is $1+e^{-w_i\gamma_1}$ whereas the same in $U(\mu)$ is $1+e^{-w_i\gamma_2}$. Therefore, in this case $X^\lambda=X^\mu$ does not imply $U(\lambda)=U(\mu)$, which precludes us from any conclusion about isomorphism of the corresponding representations.
        \end{remark}

            The proof that the coefficient of $X^\lambda$ in $-\log U(\lambda)$ is nonzero for all \g in \autoref{ x lam to u lam atyp} is given in the following subsections. First we have singled out the $\mathfrak{sl}(m,1)$ case inasmuch as the proofs for the other types closely resemble it. At the end we have treated the superalgebra $\mathfrak{sl}(m+1,n+1)$.
            
        \begin{notation}
            We denote by $\mathcal{F}$ the formal power series algebra $\mathbb{C}[[Z_\gamma\colon\gamma\in\Phi_1]]$, where $Z_\gamma=e^{-\gamma}$.
        \end{notation}
        
        \subsection{} In this subsection we assume that $\g=\mathfrak{sl}(m,1)$ and we show that the coefficient of $X^\lambda$ in $-\log U(\lambda)$ is nonzero. We start by listing down the positive roots of \g:
        \begin{align}
            \Phi_0^+=\{\varepsilon_i-\varepsilon_j\colon 1\leqslant i<j\leqslant m\},\quad &\text{and}\quad \Phi_1^+=\{\varepsilon_i-\delta_1\colon 1\leqslant i\leqslant m\}.\label{roots}
            \intertext{The invariant form on $\h^*$ is determined by}(\varepsilon_i,\varepsilon_j)=\delta_{ij},\quad &\text{and}\quad  (\varepsilon_i,\delta_1)=0.\label{invform}
        \end{align} 
        Denote by $\alpha_i$ (resp. $\beta_i$) the even simple root $\varepsilon_i-\varepsilon_{i+1}$, (resp. the isotropic root $\varepsilon_i-\delta_1$). In this case all odd roots are isotropic. The simple reflection corresponding to $\alpha_i$ is denoted by $s_i$. Let $\lambda\in\h^*$ be a dominant integral weight which is singly atypical of type $\beta=\beta_t$, for some $1\leqslant t\leqslant m$.

        The following Proposition describes the coefficient of $X^\lambda$ in $-\log U(\lambda)$. From the standard Dynkin diagram of $\mathfrak{sl}(m,1)$, we see that $\Pi_0$ is connected. Moreover, in this case both $-\log U(\lambda)$ and $U(\lambda)$ are elements of $\mathcal{F}[[X_\alpha\colon\alpha\in\Pi_0]]$. The proof closely parallels that of \autoref{mainprop}, so we shall be little concise here. We put $K\coloneq k(\Pi_0)=(-1)^{|\mathcal{G}|}\sum_{k=1}^{|\Pi_0|}(-1)^k\frac{c_k(\Pi_0)}{k}$, where $c_k(\Pi_0)$ is the number of $k$-partitions of $\Pi_0$.
        
        \begin{proposition}\label{coef for sl n 1}
            With the notations as above, the coefficient of $X^\lambda$ in $-\log U(\lambda)$ is given by
            \begin{displaymath}
                \begin{dcases}
                    \qquad K\frac{1+Z_{\beta_1}}{1+Z_{\beta_2}} &\quad \text{for $\beta=\beta_1$},\\[3pt]
                    \qquad K\frac{1+Z_{\beta_m}\hfill}{1+Z_{\beta_{m-1}}} &\quad \text{for $\beta=\beta_m$},\\[3pt]
                    \frac{K(1+Z_{\beta_t})^2}{(1+Z_{\beta_{t-1}})(1+Z_{\beta_{t+1}})}  &\quad \text{for $\beta=\beta_t\neq\beta_1,\beta_m$}.
                \end{dcases}
            \end{displaymath}
        \end{proposition}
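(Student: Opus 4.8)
The plan is to transcribe the proof of \autoref{mainprop} to the atypical setting, the one genuinely new feature being a $Z_\gamma$-dependent coefficient that must be evaluated. First I would note that \autoref{sopr} and \autoref{keylem} use only the inequality $\langle\lambda+\rho,\alpha\rangle\geqslant 1$ for every $\alpha\in\Pi_0$ (which holds because $\langle\lambda,\alpha\rangle\geqslant 0$ and $\langle\rho,\alpha\rangle=1$ by \eqref{weylvec}), so they remain valid for our dominant integral singly atypical $\lambda$; in particular $\lambda+\rho$ is regular and $X(\lambda,w)$ is a nonconstant monomial in the $X_\alpha$'s for each $w\ne 1$. Since $\W\cong S_m$ acts on the odd roots by permuting the $\varepsilon_i$, one has $w\beta_t=\beta_{w(t)}$, hence $e^{-w\beta}=Z_{\beta_{w(t)}}$; writing $f_w\coloneqq(1+Z_{\beta_{w(t)}})^{-1}\in\mathcal{F}$ (each of constant term $1$),
\[U(\lambda)=f_1\Bigl(1+\sum_{1\ne w\in\W}(-1)^{\lw}g_w\,X(\lambda,w)\Bigr)=f_1(1-\zeta),\qquad g_w\coloneqq\frac{f_w}{f_1}=\frac{1+Z_{\beta_t}}{1+Z_{\beta_{w(t)}}}\in\mathcal{F}.\]
By multiplicativity of the formal logarithm, $-\log U(\lambda)=-\log f_1-\log(1-\zeta)=\log(1+Z_{\beta_t})+\sum_{k\geqslant 1}\zeta^k/k$; since $\log(1+Z_{\beta_t})$ contains no $X_\alpha$, it does not contribute to the coefficient of $X^\lambda$, which has every $X_\alpha$ to a positive power. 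So it suffices to read off the coefficient of $X^\lambda$ in $\sum_{k\geqslant 1}\zeta^k/k$.

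Now I would run the bookkeeping of \autoref{mainprop}. Expanding $\zeta^k$, a typical term is $(-1)^k(-1)^{\sum_j\ell(w_j)}\prod_{j=1}^k g_{w_j}X(\lambda,w_j)$ with every $w_j\ne 1$, and since the $g_{w_j}\in\mathcal{F}$ carry no $X_\alpha$, the $X_\alpha$-exponents of such a term are $\sum_{j}c_\alpha(w_j)$. By \autoref{keylem}, Parts~\ref{iw} and~\ref{c alphgreat a alph}, this equals $X^\lambda$ only when each $\alpha\in\Pi_0$ lies in exactly one $I(w_j)$ with $c_\alpha(w_j)=a_\alpha$ there; \autoref{keylem}, Part~\ref{c bet gret a bet} then forces every $w_j\in\mathcal{I}$, so $\mathcal{J}\coloneqq(I(w_1),\dots,I(w_k))$ is a $k$-partition of $\Pi_0$ (cf. \autoref{def k partition}) and $w_j=w(I(w_j))$. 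For such a $\mathcal{J}$ the $X_\alpha$-part is $X^\lambda$ and, exactly as in \autoref{mainprop}, the sign is $(-1)^k(-1)^{\ell(w(\mathcal{J}))}=(-1)^k(-1)^{|\Pi_0|}$. It remains to evaluate $\prod_{l=1}^k g_{w(J_l)}$ for $\mathcal{J}=(J_1,\dots,J_k)$. Since $\alpha_i=\varepsilon_i-\varepsilon_{i+1}$ and the $\alpha_i$ lying in a block $J_l$ are pairwise non-adjacent, $w(J_l)$ is a product of commuting transpositions $(i,i+1)$, so $w(J_l)(t)$ equals $t-1$ if $\alpha_{t-1}\in J_l$, equals $t+1$ if $\alpha_t\in J_l$, and equals $t$ otherwise. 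As $\alpha_{t-1}$ and $\alpha_t$ each lie in a unique block and in distinct ones, for $\beta_t\ne\beta_1,\beta_m$ I would obtain
\[\prod_{l=1}^{k}g_{w(J_l)}=\frac{(1+Z_{\beta_t})^{k}}{(1+Z_{\beta_{t-1}})(1+Z_{\beta_{t+1}})(1+Z_{\beta_t})^{k-2}}=\frac{(1+Z_{\beta_t})^{2}}{(1+Z_{\beta_{t-1}})(1+Z_{\beta_{t+1}})},\]
independent of $\mathcal{J}$ and $k$; when $t=1$ no block moves $t$ downward, giving $(1+Z_{\beta_1})/(1+Z_{\beta_2})$, and when $t=m$ no block moves $t$ upward, giving $(1+Z_{\beta_m})/(1+Z_{\beta_{m-1}})$.

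Finally, summing these contributions over all $k$ and all $\mathcal{J}\in P_k(\Pi_0)$ factors the (partition-independent) constant out and leaves $(-1)^{|\Pi_0|}\sum_{k}(-1)^k c_k(\Pi_0)/k=k(\Pi_0)=K$, which produces the three displayed formulas. The main obstacle is precisely the evaluation of $\prod_l g_{w(J_l)}$: one must track how an arbitrary block of a $k$-partition moves the distinguished index $t$, treat the boundary cases $t=1,m$, and verify that the resulting product is the same for every block of every partition so that it can be pulled out of the double sum; everything else is a faithful copy of the proof of \autoref{mainprop}.
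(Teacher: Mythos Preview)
Your proposal is correct and follows essentially the same route as the paper's proof: factor out $(1+Z_{\beta})^{-1}$, expand $-\log(1-\zeta)$, use \autoref{keylem} to see that only $k$-partitions of $\Pi_0$ contribute to the coefficient of $X^\lambda$, and then observe that in any such partition exactly the block containing $\alpha_{t-1}$ and the block containing $\alpha_t$ move the index $t$, so the $Z$-factor is partition-independent and pulls out of the sum to leave $k(\Pi_0)=K$. Your treatment of the boundary cases $t=1,m$ and the implicit handling of $k=1$ (which only arises for $\mathfrak{sl}(2,1)$; cf.\ \autoref{coef sl2,1}) are also in line with the paper.
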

        \begin{proof}
            In order to calculate $-\log U(\lambda)$, first we express $U(\lambda)$ in the following form:
            \begin{align}
                \begin{aligned}\label{eq 1-xi in atyp}
                    U(\lambda)= \sum_{w\in\W}\frac{(-1)^{\lw} X(\lambda,w)}{1+Z_{w\beta}} &=\frac{1}{1+Z_{\beta}} + \sum_{1\ne w\in\W}\frac{(-1)^{\lw} X(\lambda,w)}{1+Z_{w\beta}}\\
                &=\frac{1}{1+Z_{\beta}}\left(1+\underbrace{\sum_{1\ne w\in\W}(-1)^{\lw}\frac{1+Z_{\beta}\hfill}{1+Z_{w\beta}} X(\lambda,w)}_{-\xi}\right)\\
                &=\frac{1}{1+Z_{\beta}}(1-\xi)
                \end{aligned}
            \end{align}
            We have $\frac{1}{1+Z_{\beta}}=(1+Z_{\beta})^{-1}=1-Z_{\beta}+Z_{\beta}^2-\cdots$, and $\frac{1+Z_{\beta}\hfill}{1+Z_{w\beta}}=(1+Z_\beta)(1-Z_{w\beta}+Z_{w\beta}^2-\cdots)$. As \W leaves $\Phi_1$ invariant, both of these elements are members of $\mathcal{F}$ with constant term 1. Like in the typical case, we write $\xi=\xi_1+\xi_2$ with
            \begin{gather}\label{xi 1 and xi 2}
                \xi_1\coloneqq -\sum_{w\in\mathcal{I}}(-1)^{\lw}\frac{1+Z_{\beta}\hfill}{1+Z_{w\beta}} X(\lambda,w) \quad\text{and}\quad \xi_2\coloneqq -\sum_{w\notin\mathcal{I}}(-1)^{\lw}\frac{1+Z_{\beta}\hfill}{1+Z_{w\beta}} X(\lambda,w),
            \end{gather}
            where $\mathcal{I}=\{1\not=w\in\W\colon I(w)\text{ is totally disconnected}\}$, and $I(w)$ is the subset \{$\alpha\in\Pi_0\colon\alpha$ appears in a reduced expression of $w$\}. Now, $\log U(\lambda)=\sum_{k\geqslant 1}\sfrac{\xi^k}{k}$. Proceeding as in the proof of \autoref{mainprop}, we find that the coefficient of $X^\lambda$ in $-\log U(\lambda
            )$ is same as that in $\sum_{k\geqslant 1}\sfrac{\xi_1^k}{k}$; therefore it is enough to compute the coefficient of $X^\lambda$ in $\xi_1^k$. For any $k\geqslant 1$, we have that
            \begin{gather}{\label{coef of xi k}}
                \xi_1^k= (-1)^k\mathlarger{\sum_{w_i\in\mathcal{I}}}(-1)^{\sum\ell(w_i)} (1+Z_\beta)^k \mathlarger{\prod_{i=1}^k}\frac{X(\lambda,w_i)}{(1+Z_{w_i\beta})}.
            \end{gather}
            By \autoref{keylem}, the product $\prod_{i=1}^k X(\lambda,w_i)$ equals $X^\lambda$ only when $\mathcal{Q}\coloneqq(I(w_1),\dots, I(w_k))$ forms a $k$-partition of $\Pi_0$.

            From the description of the odd roots of \g given in \eqref{roots}, we have that for $t\neq 1,m$
            \begin{alignat*}{3}
                s_i\beta_1=
                \begin{cases}
                    \beta_2 & \text{for $i=1$}\\
                    \beta_1 & \text{for $i\neq 1$}
                \end{cases}
                \quad && \quad
                s_i\beta_m=
                \begin{cases}
                    \beta_{m-1} & \text{for $i=m$}\\
                    \beta_m & \text{for $i\neq m$}
                \end{cases}
                \qquad&&\text{and}\quad
                s_i\beta_t=
                \begin{cases}
                    \beta_{t+1} & \text{for $i=t$}\\
                    \beta_{t-1} & \text{for $i=t-1$}\\
                    \beta_t & \text{for $i\neq t,t-1$}.
                \end{cases}
            \end{alignat*}
            First we discuss the case where $\beta\notin\{\beta_1,\beta_m\}$. As $\cup_{j=1}^k I(w_j)=\Pi_0$, it follows that $\alpha_{t-1}\in I(w_p)$ and $\alpha_t\in I(w_q)$ for some $p,q\in\{1,\dots,k\}$ with $p\neq q$. Also by \autoref{def k partition}, each $I(w_j)$ is totally disconnected, whence $w_j$ is a product of commuting simple reflections. In particular, $w_p=s_{t-1}u$, for some $u\in\W$ such that $\alpha_{t-1}\notin I(u)$. Then $w_p\beta=\beta_{t-1}$. Similarly, $w_q\beta=\beta_{t+1}$, and $w_j\beta=\beta$ for all $j\ne p,q$. This observation immediately implies that for $k\geqslant 2$ we have: (\textit{see} \autoref{coef sl2,1})
            \begin{gather}{\label{y beta sq}}
                \frac{(1+Z_\beta)^k}{\prod_{i=1}^{k}(1+Z_{w_i\beta})}=\frac{(1+Z_\beta)^2}{(1+Z_{\beta_{t-1}})(1+Z_{\beta_{t+1}})}\coloneqq M(\beta),\text{ say.}
            \end{gather}
            Since $\mathcal{Q}$ is a $k$-partition, we get $\sum_{j=1}^k \ell(w_j)=\ell(w_1\cdots w_k)=\ell(w(\mathcal{Q}))$, and $(-1)^{\ell(w(\mathcal{Q}))}=(-1)^{|\Pi_0|}$.
            Note that $M(\beta)$ is independent of $\mathcal{Q}$, therefore summing over all $k$-partitions of $\Pi_0$ we find that the coefficient of $X^\lambda$ in $\sfrac{\xi_1^k}{k}$ is given by:
            \begin{gather*}
                \sum_{\mathcal{Q}\in P_k(\Pi_0)} \frac{(-1)^k\cdot (-1)^{\ell(w(\mathcal{Q}))} M(\beta)}{k}=\frac{(-1)^k \cdot (-1)^{|\Pi_0|} c_k(\Pi_0)M(\beta)}{k}.
                \intertext{Equation \eqref{y beta sq} remains valid for any positive integer $k>1$, so finally we obtain that the required coefficient in $\sum_{k\geqslant 1}\sfrac{\xi_1^k}{k}$ equals:}
                 M(\beta) (-1)^{|\Pi_0|}\sum_{k\geqslant 1}^{|\Pi_0|} \frac{(-1)^k\cdot c_k(\Pi_0)}{k}=M(\beta)k(\Pi_0). 
            \end{gather*}
            This completes the proof for the case when $\beta\notin\{\beta_1,\beta_m\}$.

            Now assume $\beta=\beta_1=\varepsilon_1-\delta_1$. Since $s_i(\beta_1)=\beta_1$ for $i \neq 1$ and $s_1(\beta_1)=\beta_2$, Equation \eqref{y beta sq} now takes the following form:
            \begin{displaymath}
                \frac{(1+Z_\beta)^k}{\prod_{i=1}^{k}(1+Z_{w_i\beta})}=\frac{1+Z_\beta\hfill}{1+Z_{\beta_2}}\coloneqq N(\beta),\text{ say.}\tag{\ref*{y beta sq}a}
            \end{displaymath}
            Arguing as above we conclude that the coefficient of $X^\lambda$ in $-\log U(\lambda)$ is just $N(\beta)k(\Pi_0)$. The case when $\beta=\beta_m=\varepsilon_m-\delta_1$ is similar.
        \end{proof}

        \begin{remark}\label{coef sl2,1}
            In the above proof we have suppressed the fact that Equation \eqref{y beta sq} is not valid for $k=1$. Indeed, the number of 1-partitions is nonzero only when $\g=\mathfrak{sl}(2,1)$. However, in this case we can directly see that the coefficient of $X^\lambda$ in $-\log U(\lambda)$ is $\smash{-\sfrac{(1+Z_\beta\hfill)}{(1+Z_{s\beta})}}$, where $\beta$ is the atypicality type of $\lambda$, and $s$ being the unique simple reflection in \W.
        \end{remark}

        \subsection{} In this subsection we assume that \g is of type $C_{n+1}$, i.e., $\g=\mathfrak{osp}(2,2n)$ for $n>1$. The root system of \g is given by:
        \begin{gather*}
            \Phi_0=\{\pm2\delta_i;\ \pm\delta_i\pm\delta_j\}_{i\neq j},\quad \Phi_1=\{\pm\varepsilon_1\pm\delta_j\}.
            \intertext{The invariant form on $\h^*$ is determined by:}
            (\delta_i,\delta_j)=-\delta_{ij}\quad\text{and}\quad (\varepsilon_1,\delta_j)=0.
            \intertext{The standard simple system is given by:}
            \Pi=\{\delta_1-\delta_2,\dots, \delta_{n-1}-\delta_n,2\delta_n;\  \varepsilon_1-\delta_1\}.
        \end{gather*}
        Denote by $\alpha_j$ the simple root $\delta_j-\delta_{j+1}$, for $1\leqslant j\leqslant n-1$; and we put $\alpha_n=2\delta_n,\ \alpha_{n+1}=\varepsilon_1-\delta_1$. We also have that $\Pi_0=\Pi\backslash\{\alpha_{n+1}\}$, and $\Phi_1^+=\{\varepsilon_1\pm \delta_j\}$ (all are isotropic). The simple reflection corresponding to $\alpha_j$ is denoted by $s_j$. 
        
        Let $\gamma_p\coloneqq\varepsilon_1+\delta_p$ and $\gamma_p'\coloneqq\varepsilon_1-\delta_p$. Consider a dominant integral singly atypical weight $\lambda$ of \g of type $\gamma=\gamma_p$ (the case for $\gamma=\gamma_p'$ is similar). The character formula for $V(\lambda)$ is given by Equation \eqref{char} with $\beta$ replaced by $\gamma$, and the symbols being understood in $\mathfrak{osp}(2,2n)$. Note that $s_j\gamma_p=\gamma_p$ for $j \neq p, p-1$; while $s_{p-1}\gamma_p=\gamma_{p-1},\  s_p\gamma_p=\gamma_{p+1}$ and $s_{n-1}\gamma_n=\gamma_{n-1},\ s_n\gamma_n=\gamma_n'$. Now arguing as in the proof of \autoref{coef for sl n 1}, we obtain the following Proposition.
        
        \begin{proposition}\label{coef for cn}
            With notations as above, the coefficient of $X^\lambda$ in $-\log U(\lambda)$ is given by
            \begin{displaymath}
                \begin{dcases}
                    \qquad K\frac{1+Z_{\gamma_1}}{1+Z_{\gamma_2}} &\quad\text{for $\gamma=\gamma_1$},\\[3pt]
                    \frac{K(1+Z_{\gamma_n})^2}{(1+Z_{\gamma_{n-1}})(1+Z_{\gamma_n'})} &\quad\text{for $\gamma=\gamma_n$},\\[3pt]
                    \frac{K(1+Z_{\gamma_p})^2}{(1+Z_{\gamma_{p-1}})(1+Z_{\gamma_{p+1}})} &\quad\text{for $\gamma=\gamma_p\neq\gamma_1,\gamma_n$}.
                \end{dcases}
            \end{displaymath}
        \end{proposition}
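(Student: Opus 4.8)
The plan is to run the proof of \autoref{coef for sl n 1} almost verbatim, the only genuinely new ingredient being the $\W$-orbit bookkeeping of the atypicality root $\gamma$. First I would rewrite $U(\lambda)$ in the shape $U(\lambda)=\tfrac{1}{1+Z_\gamma}(1-\xi)$ as in Equation \eqref{eq 1-xi in atyp}, with $\xi=-\sum_{1\neq w\in\W}(-1)^{\lw}\tfrac{1+Z_\gamma}{1+Z_{w\gamma}}X(\lambda,w)$; since $\W$ preserves $\Phi_1$, each factor $\tfrac{1+Z_\gamma}{1+Z_{w\gamma}}$ lies in $\mathcal F$ with constant term $1$, so $\xi\in\mathcal F[[X_\alpha\colon\alpha\in\Pi_0]]$ has zero constant term. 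Next I would split $\xi=\xi_1+\xi_2$ along $\mathcal I$ exactly as in \eqref{xi 1 and xi 2}; because $\log\tfrac{1}{1+Z_\gamma}\in\mathcal F$ contains no monomial with support in $\Pi_0$, the coefficient of $X^\lambda$ in $-\log U(\lambda)$ equals its coefficient in $\sum_{k\geqslant 1}\xi_1^k/k$, and by \autoref{keylem} the product $\prod_{i=1}^k X(\lambda,w_i)$ occurring in $\xi_1^k$ yields $X^\lambda$ exactly when $\mathcal Q\coloneqq(I(w_1),\dots,I(w_k))$ is a $k$-partition of $\Pi_0$ — this reduction is identical to the ones in \autoref{mainprop} and \autoref{coef for sl n 1}.

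The core of the argument is then the evaluation of the denominator $\prod_{i=1}^k(1+Z_{w_i\gamma})$ for such a $\mathcal Q$ with $k\geqslant 2$, via the orbit relations stated just before the Proposition: $s_j\gamma_p=\gamma_p$ for $j\neq p-1,p$, $s_{p-1}\gamma_p=\gamma_{p-1}$, $s_p\gamma_p=\gamma_{p+1}$, and on the boundary $s_{n-1}\gamma_n=\gamma_{n-1}$, $s_n\gamma_n=\gamma_n'$, with only $s_1$ moving $\gamma_1$, to $\gamma_2$. The decisive observation — as in the $\mathfrak{sl}(m,1)$ case — is that the even simple roots that move $\gamma$ are mutually adjacent in the chain $\alpha_1-\cdots-\alpha_{n-1}\!\Leftarrow\!\alpha_n$ spanning $\Pi_0$; since each part of $\mathcal Q$ is totally disconnected they are forced into distinct parts, and because the reflections occurring in the part containing, say, $\alpha_{p-1}$ commute with $s_{p-1}$ (hence avoid $s_{p-2},s_p$) and so do not involve $\delta_p$, that part sends $\gamma_p$ to $\gamma_{p-1}$, the part containing $\alpha_p$ sends $\gamma_p$ to $\gamma_{p+1}$, and all other $w_j$ fix $\gamma_p$. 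Hence the denominator collapses to a factor independent of $\mathcal Q$, namely
\begin{gather*}
M(\gamma)\coloneqq\frac{(1+Z_{\gamma_p})^2}{(1+Z_{\gamma_{p-1}})(1+Z_{\gamma_{p+1}})}
\end{gather*}
for $\gamma=\gamma_p$ with $1<p<n$; the same reasoning with the special pair $\{\alpha_{n-1},\alpha_n\}$ gives $M(\gamma_n)=\tfrac{(1+Z_{\gamma_n})^2}{(1+Z_{\gamma_{n-1}})(1+Z_{\gamma_n'})}$, and for $\gamma_1$ only the single root $\alpha_1$ is special, lying in one part, so $M(\gamma_1)=\tfrac{1+Z_{\gamma_1}}{1+Z_{\gamma_2}}$.

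To conclude I would copy the end of \autoref{coef for sl n 1}: using $\sum_j\ell(w_j)=\ell(w(\mathcal Q))$ and $(-1)^{\ell(w(\mathcal Q))}=(-1)^{|\Pi_0|}$ and summing over $\mathcal Q\in P_k(\Pi_0)$, the coefficient of $X^\lambda$ in $\xi_1^k/k$ is $\tfrac1k(-1)^k(-1)^{|\Pi_0|}c_k(\Pi_0)M(\gamma)$; summing over $k$ gives $M(\gamma)(-1)^{|\Pi_0|}\sum_{k=1}^{|\Pi_0|}\tfrac{(-1)^k c_k(\Pi_0)}{k}=M(\gamma)\,k(\Pi_0)=K\,M(\gamma)$, which is the asserted value. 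I expect the only subtle point to be the combinatorial step described above — verifying that connectivity of $\Pi_0$ together with total disconnectedness of the blocks forces the $\gamma$-moving simple roots into separate blocks of every $k$-partition, so that the denominator is a single partition-independent expression; the restriction $k\geqslant 2$ there is harmless because $n>1$ means $\Pi_0$ admits no $1$-partition (compare \autoref{coef sl2,1}).
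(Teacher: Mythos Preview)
Your proposal is correct and follows exactly the approach the paper takes: the paper's proof consists of the single line ``arguing as in the proof of \autoref{coef for sl n 1}'' after recording the orbit relations $s_j\gamma_p=\gamma_p$ for $j\neq p-1,p$, $s_{p-1}\gamma_p=\gamma_{p-1}$, $s_p\gamma_p=\gamma_{p+1}$, $s_n\gamma_n=\gamma_n'$. You have spelled out precisely those details, including the key observation that the $\gamma$-moving simple roots are adjacent and hence separated by any $k$-partition, and your handling of the $k=1$ vacuity via $n>1$ is correct.
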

        
        \subsection{} In this subsection we assume that $\g=G(3)$. The roots of \g are expressed in terms of the elements $\varepsilon_1,\varepsilon_2,\varepsilon_3,\delta\in\h^*$, where $\varepsilon_1+\varepsilon_2+\varepsilon_3=0$, and the invariant form is determined by:
        \begin{gather*}
            (\varepsilon_1,\varepsilon_1)=(\varepsilon_2,\varepsilon_2)=-2(\varepsilon_1,\varepsilon_2)=-(\delta,\delta)=2 \quad\text{and}\quad (\varepsilon_i,\delta)=0.
            \intertext{The standard simple system of \g is given by}
            \Pi=\{\alpha_1=\varepsilon_1,\ \alpha_2=\varepsilon_2-\varepsilon_1,\ \alpha_3=\varepsilon_3+\delta\}.
            \intertext{We also have that}
            \Pi_0=\{\alpha_1,\alpha_2\}\text{\quad and\quad}\Phi_1^+=\{\delta,\ \pm\varepsilon_i+\delta\}.
        \end{gather*}
        All positive odd roots are isotropic with the exception of $\delta$. We denote the simple reflection corresponding to $\alpha_i$ by $s_i$ for $i=1,2$.

        Let $\lambda \neq \lambda_1, \lambda_2$ be a dominant integral singly atypical weight of atypicality type $\beta$. From ~\cite{MR3253284}, we see that the character of $V(\lambda)$ is again given by Equation \eqref{char} with the symbols in $G(3)$. Since in this case $\Pi_0$ has only two elements which are not mutually orthogonal, the coefficient of $X^\lambda$ in $-\log U(\lambda)$ is same as that in $\sfrac{\xi_1^2}{2}$, (cf. Equation \ref{xi 1 and xi 2}). For $\lambda=\lambda_1,\lambda_2$, the analogues of Equations \eqref{eq 1-xi in atyp} and \eqref{coef of xi k} are respectively:
        \begin{gather}
            U(\lambda)=\frac{\phantom{2}2+Z_\beta}{2(1+Z_\beta)}\left(1+\underbrace{\sum_{1\neq w\in\W}(-1)^{\lw}\frac{1+Z_\beta}{2+Z\beta}\cdot\frac{2+Z_{w\beta}}{1+Z_{w\beta}}X(\lambda,w)}_{-\xi}\right)=\frac{\phantom{2}2+Z_\beta}{2(1+Z_\beta)}(1-\xi) \tag{\ref*{eq 1-xi in atyp}a}.\label{eq 1-xi lam1 lam2}\\
            \xi_1^k= (-1)^k\mathlarger{\sum_{w_i\in\mathcal{I}}}(-1)^{\sum\ell(w_i)} \left(\frac{1+Z_\beta}{2+Z_\beta}\right)^k \mathlarger{\prod_{i=1}^k}\left(\frac{2+Z_{w_i\beta}}{1+Z_{w_i\beta}}\right) X(\lambda,w_i). \tag{\ref*{coef of xi k}a}\label{coef xi k lam1 lam2}
        \end{gather}
        Now from Equations \eqref{coef of xi k} and \eqref{coef xi k lam1 lam2}, we obtain the following:
        \begin{proposition}\label{coef for G3}
            With notations as above, the coefficient of $X^\lambda$ in $-\log U(\lambda)$ is given by
            \begin{displaymath}
                \begin{dcases}
                    \phantom{M^2}\frac{(1+Z_\beta)^2}{(1+Z_{s_1\beta})(1+Z_{s_2\beta})} &\quad\text{for $\lambda\neq \lambda_1,\lambda_2$},\\[5pt]
                    M^2\frac{(2+Z_{s_1\beta})(2+Z_{s_2\beta})}{(1+Z_{s_1\beta})(1+Z_{s_2\beta})} &\quad\text{for $\lambda= \lambda_1,\lambda_2$}.
                \end{dcases}
            \end{displaymath}
            Here $M=\frac{1+Z_\beta}{2+Z_\beta}$.
        \end{proposition}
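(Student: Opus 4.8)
The plan is to specialise the template of the proof of \autoref{coef for sl n 1} to the graph of $G(3)$, in which $\Pi_0=\{\alpha_1,\alpha_2\}$ consists of exactly two simple roots joined by an edge. The first step is the combinatorial input: the subgraph on $\Pi_0$ is a single edge, so $\Pi_0$ is not totally disconnected (hence has no $1$-partition) and, since each block of a partition is nonempty, it admits no $k$-partition for $k\geqslant 3$. Thus the only $k$-partitions of $\Pi_0$ are the two $2$-partitions $\mathcal{Q}=(\{\alpha_1\},\{\alpha_2\})$ and $\mathcal{Q}'=(\{\alpha_2\},\{\alpha_1\})$, with $w(\mathcal{Q})=s_1s_2$, $w(\mathcal{Q}')=s_2s_1$ and $\ell(w(\mathcal{Q}))=\ell(w(\mathcal{Q}'))=|\Pi_0|=2$; in particular $c_1(\Pi_0)=0$, $c_2(\Pi_0)=2$, so $k(\Pi_0)=1$.

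For $\lambda\neq\lambda_1,\lambda_2$ I would start from the factorisation $U(\lambda)=\tfrac{1}{1+Z_\beta}(1-\xi)$ of \eqref{eq 1-xi in atyp} and write $\xi=\xi_1+\xi_2$ as in \eqref{xi 1 and xi 2}. Since $-\log\tfrac{1}{1+Z_\beta}=\log(1+Z_\beta)$ involves only $Z_\beta$ and no $X_\alpha$, it does not affect the coefficient of $X^\lambda$; hence that coefficient equals the coefficient of $X^\lambda$ in $\sum_{k\geqslant 1}\xi^k/k$. As in the proof of \autoref{mainprop}, \autoref{keylem}, Part \ref{c bet gret a bet} shows that any product $\prod_{i}X(\lambda,w_i)$ containing a factor with $w_i\notin\mathcal{I}\cup\{1\}$ has some $X_\alpha$-exponent strictly larger than the exponent of $X_\alpha$ in $X^\lambda$, so $\xi_2$ is irrelevant; and by \autoref{keylem}, Part \ref{c alph eq a alph} together with $a_\alpha=\langle\lambda+\rho,\alpha\rangle\geqslant 1$ for $\alpha\in\Pi_0$ (valid because $\lambda$ is dominant integral and $\langle\rho,\alpha\rangle=1$), a product $\prod_{i}X(\lambda,w_i)$ with all $w_i\in\mathcal{I}$ equals $X^\lambda$ exactly when $(I(w_1),\dots,I(w_k))$ is a $k$-partition of $\Pi_0$. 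By the first step only $k=2$ survives. Substituting $k=2$ in \eqref{coef of xi k} and summing over $\mathcal{Q}$ and $\mathcal{Q}'$: in each the sign equals $1$ (as $(-1)^{\ell(w(\mathcal{Q}))}=(-1)^{|\Pi_0|}=1$), one has $\{w_1\beta,w_2\beta\}=\{s_1\beta,s_2\beta\}$, and the rational factor is $\tfrac{(1+Z_\beta)^2}{(1+Z_{s_1\beta})(1+Z_{s_2\beta})}$; dividing by $k=2$ yields $\tfrac{(1+Z_\beta)^2}{(1+Z_{s_1\beta})(1+Z_{s_2\beta})}$ ($=k(\Pi_0)$ times the same quantity), which is the claim in this case.

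For $\lambda=\lambda_1$ or $\lambda_2$ the argument is identical after replacing \eqref{eq 1-xi in atyp} and \eqref{coef of xi k} by \eqref{eq 1-xi lam1 lam2} and \eqref{coef xi k lam1 lam2}: the prefactor $\tfrac{2+Z_\beta}{2(1+Z_\beta)}$ again carries no $X_\alpha$ and drops out, and for $k=2$ the rational factor becomes $M^2\prod_{i=1}^{2}\tfrac{2+Z_{w_i\beta}}{1+Z_{w_i\beta}}=M^2\tfrac{(2+Z_{s_1\beta})(2+Z_{s_2\beta})}{(1+Z_{s_1\beta})(1+Z_{s_2\beta})}$, with $M=\tfrac{1+Z_\beta}{2+Z_\beta}$; summing over $\mathcal{Q},\mathcal{Q}'$ and dividing by $2$ gives the stated value. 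I expect the only point needing genuine care to be the reduction from $\xi$ to $\xi_1$ and the harmless removal of the prefactors; the remaining combinatorics — the sign, the two $2$-partitions, and $k(\Pi_0)=1$ — are immediate specialisations of \autoref{coef for sl n 1} to a two-vertex graph.
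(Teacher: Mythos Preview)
Your proposal is correct and follows essentially the same approach as the paper: the paper's argument is simply that since $\Pi_0$ has two non-orthogonal elements, the coefficient of $X^\lambda$ in $-\log U(\lambda)$ coincides with that in $\xi_1^2/2$, and then the result is read off from Equations \eqref{coef of xi k} and \eqref{coef xi k lam1 lam2}. You have spelled out the same reduction in greater detail (the absence of $1$- and $k\geqslant3$-partitions, the irrelevance of the $Z_\beta$-only prefactor, the passage from $\xi$ to $\xi_1$, and the explicit sign and count over the two $2$-partitions), but the underlying strategy is identical.
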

        
        \subsection{} We now consider $\g=F(4)$. The invariant form on $\h^*$ is determined by
        \begin{gather*}
            (\varepsilon_i,\varepsilon_j)=\delta_{ij},\quad (\delta,\delta)=-3,\quad\text{and\quad} (\varepsilon_i,\delta)=0 \quad\text{for all } i,j\in\{1,2,3\}.
            \intertext{The standard simple system is given by}
            \Pi=\{\alpha_1=\varepsilon_1-\varepsilon_2,\ \alpha_2=\varepsilon_2-\varepsilon_3,\ \alpha_3=\varepsilon_3,\ \alpha_4=\tfrac{1}{2}(-\varepsilon_1-\varepsilon_2-\varepsilon_3+\delta)\}.
            \intertext{We also have that}
            \Pi_0=\{\alpha_1,\alpha_2,\alpha_3\}\quad\text{and}\quad \Phi_1^+=\{\tfrac{1}{2}(\pm\varepsilon_1\pm\varepsilon_2\pm\varepsilon_3+\delta)\}.
        \end{gather*}

        Here all the odd roots are isotropic. As before the simple reflection corresponding to $\alpha_i$ is denoted by $s_i$ for $i=1,2,3$. Consider now a dominant integral singly atypical weight $\lambda\neq\lambda_1,\lambda_2$ of \g with atypicality type $\beta$. Observe that no simple reflection fixes $\beta$. The character formula for the \g-module $V(\lambda)$ is same as that for $G(3)$. Here $\alpha_1$ and $\alpha_3$ are mutually orthogonal. In this case, we need to compute the coefficient of $X^\lambda$ in $\sfrac{\xi_1^2}{2}+\sfrac{\xi_1^3}{3}$. The coefficient in $\sfrac{\xi_1^3}{3}$ corresponds to the 3-partitions of $\Pi_0$, and it is equal to $\frac{2(1+Z_\beta)^3}{(1+Z_{s_1\beta})(1+Z_{s_2\beta})(1+Z_{s_3\beta})}$. There is only one 2-partition of $\Pi_0$, namely $(I_1,I_2)$ with $I_1=\{\alpha_1,\alpha_3\}$ and $I_2=\{\alpha_2\}$. This contributes to the coefficient of $X^\lambda$ in $\sfrac{\xi_1^2}{2}$ which is equal to $\frac{-(1+Z_\beta)^2}{(1+Z_{s_1s_3\beta})(1+Z_{s_2\beta})}$. The case when $\lambda=\lambda_1,\lambda_2$ goes over verbatim with the exception that Equation  \eqref{coef xi k lam1 lam2} is used instead of Equation \eqref{coef of xi k} to compute the coefficient of $\sfrac{\xi_1^k}{k}$. Consequently, We have the following:
        \begin{proposition}\label{coef for f4}
            With the notations as above, the coefficient of $X^\lambda$ in $-\log U(\lambda)$ is given by
            \begin{displaymath}
                \begin{dcases}
                    \phantom{2M^3}\frac{2(1+Z_\beta)^3}{(1+Z_{s_1\beta})(1+Z_{s_2\beta})(1+Z_{s_3\beta})}-\frac{(1+Z_\beta)^2}{(1+Z_{s_1s_3\beta})(1+Z_{s_2\beta})} &\quad\text{for $\lambda\neq\lambda_1,\lambda_2$},\\[5pt]
                    2M^3\frac{(2+Z_{s_1\beta})(2+Z_{s_2\beta})(2+Z_{s_3\beta})}{(1+Z_{s_1\beta})(1+Z_{s_2\beta})(1+Z_{s_3\beta})} - M^2\frac{(2+Z_{s_1s_3\beta})(2+Z_{s_2\beta})}{(1+Z_{s_1s_3\beta})(1+Z_{s_2\beta})} &\quad\text{for $\lambda=\lambda_1,\lambda_2$}.
                \end{dcases}
            \end{displaymath}
            Here $M=\frac{1+Z_\beta}{2+Z_\beta}$.
        \end{proposition}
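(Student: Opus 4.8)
The plan is to run the argument of \autoref{coef for sl n 1} essentially verbatim, the only genuinely new ingredient being the combinatorics of $k$-partitions of the three-node graph $\Pi_0=\{\alpha_1,\alpha_2,\alpha_3\}$, which here is the path with orthogonal end-nodes $\alpha_1,\alpha_3$ and middle node $\alpha_2$ joined to both. First, for $\lambda\neq\lambda_1,\lambda_2$, I would write $U(\lambda)=\tfrac{1}{1+Z_\beta}(1-\xi)$ exactly as in Equation \eqref{eq 1-xi in atyp} and split $\xi=\xi_1+\xi_2$ as in Equation \eqref{xi 1 and xi 2}. The prefactor $\tfrac{1}{1+Z_\beta}$ lies in $\mathcal{F}$ and carries no $X_\alpha$, so it contributes nothing to the coefficient of the monomial $X^\lambda=\prod_{\alpha\in\Pi_0}X_\alpha^{\langle\lambda+\rho,\alpha\rangle}$ (all of whose exponents are positive by Equation \eqref{weylvec}); hence the sought coefficient equals the coefficient of $X^\lambda$ in $-\log(1-\xi)=\sum_{k\geqslant1}\xi^k/k$. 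By \autoref{keylem}, Part \ref{c bet gret a bet}, the piece $\xi_2$ never contributes to $X^\lambda$, so it suffices to extract the coefficient of $X^\lambda$ from $\sum_{k\geqslant1}\xi_1^k/k$; and by \autoref{keylem}, Part \ref{c alph eq a alph}, in the expansion \eqref{coef of xi k} of $\xi_1^k$ the product $\prod_{i=1}^kX(\lambda,w_i)$ equals $X^\lambda$ exactly when $(I(w_1),\dots,I(w_k))$ is a $k$-partition of $\Pi_0$.

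Next I would enumerate the $k$-partitions of $\Pi_0$. Since $\Pi_0$ is not totally disconnected there is no $1$-partition; the only unordered $2$-partition is $\{\{\alpha_1,\alpha_3\},\{\alpha_2\}\}$, accounting for the $c_2(\Pi_0)=2$ ordered tuples, with associated Weyl elements $w(\{\alpha_1,\alpha_3\})=s_1s_3$ and $w(\{\alpha_2\})=s_2$; and the only unordered $3$-partition is $\{\{\alpha_1\},\{\alpha_2\},\{\alpha_3\}\}$, accounting for the $c_3(\Pi_0)=6$ ordered tuples, with Weyl elements $s_1,s_2,s_3$. For any such partition $\mathcal{Q}$ one has $\sum_i\ell(w_i)=\ell(w(\mathcal{Q}))$ and $(-1)^{\ell(w(\mathcal{Q}))}=(-1)^{|\Pi_0|}$, as in the proof of \autoref{mainprop}. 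Feeding these data into \eqref{coef of xi k} and dividing by $k$, the coefficient of $X^\lambda$ in $\xi_1^2/2$ comes out as $\tfrac12\,c_2(\Pi_0)\,(-1)^2(-1)^{|\Pi_0|}\tfrac{(1+Z_\beta)^2}{(1+Z_{s_1s_3\beta})(1+Z_{s_2\beta})}=-\tfrac{(1+Z_\beta)^2}{(1+Z_{s_1s_3\beta})(1+Z_{s_2\beta})}$, while the coefficient in $\xi_1^3/3$ is $\tfrac13\,c_3(\Pi_0)\,(-1)^3(-1)^{|\Pi_0|}\tfrac{(1+Z_\beta)^3}{(1+Z_{s_1\beta})(1+Z_{s_2\beta})(1+Z_{s_3\beta})}=\tfrac{2(1+Z_\beta)^3}{(1+Z_{s_1\beta})(1+Z_{s_2\beta})(1+Z_{s_3\beta})}$, all remaining $\xi_1^k/k$ being annihilated on $X^\lambda$. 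Adding these two numbers gives the first case of the proposition.

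For $\lambda=\lambda_1,\lambda_2$ I would repeat the computation with Equation \eqref{eq 1-xi lam1 lam2} replacing \eqref{eq 1-xi in atyp} and Equation \eqref{coef xi k lam1 lam2} replacing \eqref{coef of xi k}: the prefactor becomes $\tfrac{2+Z_\beta}{2(1+Z_\beta)}\in\mathcal{F}$, which again carries no $X_\alpha$; the factor $(1+Z_\beta)^k$ gets replaced by $M^k$ with $M=\tfrac{1+Z_\beta}{2+Z_\beta}$, and each $\tfrac{1}{1+Z_{w_i\beta}}$ by $\tfrac{2+Z_{w_i\beta}}{1+Z_{w_i\beta}}$. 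The same partition bookkeeping then yields the coefficient $2M^3\tfrac{(2+Z_{s_1\beta})(2+Z_{s_2\beta})(2+Z_{s_3\beta})}{(1+Z_{s_1\beta})(1+Z_{s_2\beta})(1+Z_{s_3\beta})}-M^2\tfrac{(2+Z_{s_1s_3\beta})(2+Z_{s_2\beta})}{(1+Z_{s_1s_3\beta})(1+Z_{s_2\beta})}$, which is the second case.

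The computation is finite and elementary; there is no deep obstacle. The only point requiring care is the bookkeeping: correctly listing the $k$-partitions of $\Pi_0$, identifying the reduced-word element $w(\mathcal{Q})$ of each part (in particular that the part $\{\alpha_1,\alpha_3\}$ yields the commuting product $s_1s_3$, which is why $Z_{s_1s_3\beta}$, rather than $Z_{s_1\beta}Z_{s_3\beta}$, enters the quadratic term), and tracking the action of these elements on the atypicality root $\beta$. It is worth emphasizing the contrast with \autoref{coef for G3}: there $\Pi_0$ has only two nodes and they are not orthogonal, so only $\xi_1^2$ survives; here the orthogonality $(\alpha_1,\alpha_3)=0$ produces a genuine $3$-partition, and hence the extra cubic term in the formula.
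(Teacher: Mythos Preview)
Your proposal is correct and follows essentially the same approach as the paper: you reduce to the coefficient of $X^\lambda$ in $\sfrac{\xi_1^2}{2}+\sfrac{\xi_1^3}{3}$, enumerate the (ordered) $2$- and $3$-partitions of the path graph $\Pi_0$ with orthogonal endpoints $\alpha_1,\alpha_3$, and read off the contributions using \eqref{coef of xi k} (resp.\ \eqref{coef xi k lam1 lam2} for the special weights). The only cosmetic difference is that you spell out the counts $c_2(\Pi_0)=2$, $c_3(\Pi_0)=6$ and the sign bookkeeping explicitly, whereas the paper just states the resulting contributions.
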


        \subsection{} We now treat the Lie superalgebra $\g=\mathfrak{sl}(m+1,n+1)$ for $n>0$. In this case we have $\Pi_0=S\sqcup T$, with $S=\{\alpha_1,\dots,\alpha_m\}$ and $T=\{\beta_1,\dots,\beta_n\}$, where $\alpha_i=\varepsilon_i-\varepsilon_{i+1}$ and $\beta_j=\delta_j-\delta_{j+1}$. The simple reflection corresponding to $\alpha_i$ is denoted by $s_i$ and that corresponding to $\beta_j$ by $t_j$. The sets $S$ and $T$ are mutually orthogonal. We have that $\Phi_1^+=\{\gamma_{ij}\coloneqq\varepsilon_i-\delta_j\colon 1\leqslant i\leqslant m,\ 1\leqslant j\leqslant n\}$, and all of these roots are isotropic.

        Consider now $\lambda\in\h^*$, a dominant integral singly atypical weight of \g of type $\gamma_{pq}$. Note that $\gamma_{pq}$ is fixed by all simple reflections except $s_{p-1},s_p,t_{q-1}$ and $t_q$. Our aim is to calculate the coefficient of $X^\lambda$ in the expansion of $-\log U(\lambda)$. Since $(\alpha_i,\beta_j)=0$, the contribution of a $k$-partition to the coefficient of $X^\lambda$ is contingent upon how $\alpha_{p-1},\alpha_p,\beta_{q-1}$ and $\beta_q$ appear in that  particular partition. Therefore, all possible combinations of $k$-partitions have to be treated separately for any $k>1$. We put $B_{p,q}\coloneqq\{\alpha_{p-1},\alpha_p,\beta_{q-1},\beta_q\}$. Below we list down all distinct combinations of how $\alpha_{p-1},\alpha_p,\beta_{q-1}$ and $\beta_q$ can occur in a $k$-partition of $\Pi_0$.

        \begin{table}[ht]
                \begin{tabular}{|c|c|c|}\hline
                     $I_1$ & $I_2$ & \scriptsize{Contribution to the coeff. of $X^\lambda$ in $\sfrac{\xi_1^k}{k}$}\\ \hline
                     $\alpha_p,\beta_q$ & $\alpha_{p-1},\beta_{q-1}$ & $f_1=\frac{(1+Z_{\gamma_{pq}})^2}{(1+Z_{\gamma_{p+1,q+1}})(1+Z_{ \gamma_{p-1,q-1}})}$\\[3.5pt] \hline
                     $\alpha_p,\beta_{q-1}$ & $\alpha_{p-1},\beta_q$ & $f_2=\frac{(1+Z_{\gamma_{pq}})^2}{(1+Z_{\gamma_{p+1,q-1}})(1+Z_{\gamma_{p-1,q+1}})}$\\[3.5pt] \hline
                \end{tabular} 
                \caption{\textsc{2 parts}}{\label{2parts}}
        \end{table}
        In \autoref{2parts}, we describe all possible $k$-partitions $(I_1,\dots,I_k)$ where all the elements of $B_{p,q}$ occur as a pair. Note that, the number of occurrences of the two types of $k$-partitions in \autoref*{2parts} are same and we denote this number by $r_k^{(2)}$. Then from Equation \eqref{coef of xi k}, it follows that the aggregate contribution of all these partitions to the coefficient of $X^\lambda$ in $\sfrac{\xi_1^k}{k}$ is $(-1)^{m+n}\frac{(-1)^k}{k}r_k^{(2)}(f_1+f_2)$.

        In \autoref{3parts}, we consider all $k$-partitions where two elements of $B_{p,q}$ occur as a pair and the rest occur separately. 
        \begin{table}[ht]
                \begin{tabular}{|c|c|c|c|}\hline
                   $I_1$ & $I_2$ & $I_3$ & \scriptsize{Contribution to the coeff. of $X^\lambda$ in $\sfrac{\xi_1^k}{k}$}\\ \hline
                   $\alpha_{p-1},\beta_{q-1}$ & $\alpha_p$ & $\beta_q$ & $g_1=\frac{(1+Z_{\gamma_{pq}})^3}{(1+Z_{ \gamma_{p-1,q-1}}) (1+Z_{\gamma_{p+1,q}}) (1+Z_{\gamma_{p,q+1}})}$\\[3.5pt] \hline
                   $\alpha_{p-1},\beta_{q}$ & $\alpha_{p}$ & $\beta_{q-1}$ & $g_2=\frac{(1+Z_{\gamma_{pq}})^3}{(1+Z_{\gamma_{p-1,q+1}}) (1+Z_{\gamma_{p+1,q}}) (1+Z_{\gamma_{p,q-1}})}$\\[3.5pt] \hline
                   $\alpha_{p},\beta_{q-1}$ & $\alpha_{p-1}$ & $\beta_{q}$ & $g_3=\frac{(1+Z_{\gamma_{pq}})^3}{(1+Z_{\gamma_{p+1,q-1}}) (1+Z_{\gamma_{p-1,q}}) (1+Z_{\gamma_{p,q+1}})}$\\[3.5pt] \hline
                   $\alpha_{p},\beta_{q}$ & $\alpha_{p-1}$ & $\beta_{q-1}$ & $g_4=\frac{(1+Z_{\gamma_{pq}})^3}{(1+Z_{ \gamma_{p+1,q+1}}) (1+Z_{\gamma_{p-1,q}}) (1+Z_{\gamma_{p,q-1}})}$\\[3.5pt] \hline
                \end{tabular}
                \caption{\textsc{3 parts}}\label{3parts}
        \end{table}
        As before, we denote the common number of all such $k$-partitions by $r_k^{(3)}$. Then the aggregate contribution of all these partitions to the coefficient of $X^\lambda$ is $(-1)^{m+n}\frac{(-1)^k}{k}r_k^{(3)}(g_1+g_2+g_3+g_4)$.

        \begin{table}[ht]
            \begin{tabular}{|c|c|c|c|c|}\hline
                 $I_1$ & $I_2$ & $I_3$ & $I_4$ & \scriptsize{Contribution to the coeff. of $X^\lambda$ in $\sfrac{\xi_1^k}{k}$}\\ \hline
                 $\alpha_{p-1}$ & $\beta_{q-1}$ & $\alpha_p$ & $\beta_q$ & $h_1=\frac{(1+Z_{\gamma_{pq}})^4}{(1+Z_{\gamma_{p-1,q}}) (1+Z_{\gamma_{p,q-1}}) (1+Z_{\gamma_{p+1,q}})(1+Z_{\gamma_{p,q+1}})}$\\[3.5pt] \hline
            \end{tabular}
            \caption{\textsc{4 parts}}\label{4parts}
        \end{table}
         Finally, in \autoref{4parts}, we describe all $k$-partitions of $\Pi_0$ where each element of $B_{p,q}$ occurs separately. Let $r_k^{(4)}$ denote the total number of all such partitions. Then the aggregate contribution of all the $k$-partitions depicted in \autoref*{4parts} to the coefficient of $X^\lambda$ is given by $(-1)^{m+n}\frac{(-1)^k}{k}r_k^{(4)}h_1$. As both $(\alpha_{p-1},\alpha_p)$ and $(\beta_{q-1},\beta_q)$ are nonzero, by definition of a $k$-partition we conclude that \cref{2parts,3parts,4parts} exhaust all possible combinations.
         
         We denote by $A_k$ the coefficient of $X^\lambda$ in $\sfrac{\xi_1^k}{k}$. It now follows that
         \begin{gather}\label{coef of f1 in Ak}
             A_k=(-1)^{m+n}\cdot \frac{(-1)^k}{k}[r_k^{(2)}(f_1+f_2)+r_k^{(3)}(g_1+g_2+g_3+g_4)+r_k^{(4)}h_1].
         \end{gather}
         With the notations just introduced, we find that the coefficient of $X^\lambda$ in $-\log U(\lambda)$ is given by $A\coloneqq A_2+\dots+A_{m+n}$, and it is a $\mathbb C$-linear combination of $f_1,f_2,g_1,g_2,g_3,g_4,h_1$. It remains to prove that $A\neq 0$, which is contention of the next proposition.
         
         \begin{proposition}\label{coef of sl mn}
             With the notations as above, the coefficient of $X^\lambda$ in $-\log U(\lambda)$ is nonzero, i.e. $A\neq 0$.
         \end{proposition}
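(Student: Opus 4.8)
The plan is to exhibit a single monomial in the $Z$-variables that occurs in $A$ with nonzero coefficient. Put $R^{(j)}\coloneqq\sum_{k=2}^{m+n}\frac{(-1)^k}{k}\,r_k^{(j)}$ for $j\in\{2,3,4\}$; then Equation \eqref{coef of f1 in Ak} says
\[
A=(-1)^{m+n}\bigl(R^{(2)}(f_1+f_2)+R^{(3)}(g_1+g_2+g_3+g_4)+R^{(4)}h_1\bigr).
\]
First I would isolate the $h_1$-contribution. Inspecting \cref{2parts,3parts,4parts}, each of $f_1,f_2,g_1,g_2,g_3,g_4$ is a power series in a set of $Z$-variables omitting at least one of $\gamma_{p-1,q},\gamma_{p,q-1},\gamma_{p+1,q},\gamma_{p,q+1}$, whereas in the expansion of $h_1=\tfrac{(1+Z_{\gamma_{pq}})^4}{(1+Z_{\gamma_{p-1,q}})(1+Z_{\gamma_{p,q-1}})(1+Z_{\gamma_{p+1,q}})(1+Z_{\gamma_{p,q+1}})}$ the monomial $Z_{\gamma_{p-1,q}}Z_{\gamma_{p,q-1}}Z_{\gamma_{p+1,q}}Z_{\gamma_{p,q+1}}$ occurs with coefficient $(-1)^4=1$. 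Hence the coefficient of this monomial in $A$ is $(-1)^{m+n}R^{(4)}$, and the proposition reduces to the claim $R^{(4)}\ne 0$.

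To evaluate $R^{(4)}$ I would interpret $r_k^{(4)}$ graph-theoretically. By definition it counts the $k$-partitions of $\Pi_0$ in which the four elements of $B_{p,q}=\{\alpha_{p-1},\alpha_p,\beta_{q-1},\beta_q\}$ lie in four distinct parts; since a $k$-partition of a graph is the same as a surjective proper $k$-coloring, and landing in distinct parts means receiving distinct colors, this equals $c_k(\widehat{\mathcal G})$, where $\widehat{\mathcal G}$ is obtained from $\Pi_0$ by adjoining the edges that make $B_{p,q}$ a clique. The key observation is that $B_{p,q}$ meets both connected components $S$ and $T$ of $\Pi_0$ (it always contains an even simple root of $S$ and one of $T$), so one of the new edges joins $S$ to $T$; hence $\widehat{\mathcal G}$ is connected, on the same $m+n$ vertices, and has at least one edge, so $c_1(\widehat{\mathcal G})=0$. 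Therefore
\[
R^{(4)}=\sum_{k=2}^{m+n}\frac{(-1)^k}{k}\,c_k(\widehat{\mathcal G})=\sum_{k=1}^{m+n}\frac{(-1)^k}{k}\,c_k(\widehat{\mathcal G})=(-1)^{m+n}k(\widehat{\mathcal G}),
\]
directly from the definition of the integer $k(\widehat{\mathcal G})$. By \cite{MR2980495}*{Proposition 2} (recalled in the proof of \autoref{mainprop}), connectedness of $\widehat{\mathcal G}$ forces $k(\widehat{\mathcal G})>0$; so $R^{(4)}\ne 0$, and hence $A\ne 0$.

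The step I expect to be the main obstacle is the identity $r_k^{(4)}=c_k(\widehat{\mathcal G})$ together with the verification that $\widehat{\mathcal G}$ is connected --- this is exactly the point at which the singly atypical case improves on the typical one of \autoref{secfacto}, where the monomial $X^\lambda$ does not appear in $-\log U(\lambda)$ at all because $\Pi_0=S\sqcup T$ is disconnected (its putative coefficient being $k(\Pi_0)=0$), while in the singly atypical normalized Weyl numerator the extra denominators $1+e^{-w\gamma}$ have the effect of gluing $S$ and $T$, the gluing being recorded by the clique on $B_{p,q}$. A boundary atypicality type $\gamma_{pq}$, i.e.\ $p\in\{1,m\}$ or $q\in\{1,n\}$ so that $B_{p,q}$ has only two or three elements, needs no new idea: its clique completion of $\Pi_0$ still joins $S$ to $T$, hence is connected, and the analogous edge-monomial isolates a coefficient $\pm\,k(\widehat{\mathcal G})\ne 0$ in $A$. (Alternatively, one may run the argument with $R^{(2)}$: there $r_k^{(2)}=c_k$ of the graph obtained by identifying $\alpha_{p-1}$ with $\beta_{q-1}$ and $\alpha_p$ with $\beta_q$, which is a connected tree, the isolating monomial then being $Z_{\gamma_{p+1,q+1}}Z_{\gamma_{p-1,q-1}}$.)
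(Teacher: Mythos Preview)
Your argument is correct and runs parallel to the paper's, but with a different choice of isolating term. The paper argues that $\{f_1,f_2,g_1,\ldots,g_4,h_1\}$ is linearly independent over $\mathbb{C}$ and then shows that the coefficient of $f_1$ in $A$ is nonzero: it identifies each of the pairs $\{\alpha_{p-1},\beta_{q-1}\}$ and $\{\alpha_p,\beta_q\}$ to a single vertex, obtaining a tree $\mathcal{G}_{p,q}$ on $m+n-2$ vertices, and checks that $r_k^{(2)}=c_k(\mathcal{G}_{p,q})$, so the coefficient equals $k(\mathcal{G}_{p,q})=1$ by \cite{MR2980495}*{Corollary~1}. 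You instead isolate $h_1$ via the explicit monomial $Z_{\gamma_{p-1,q}}Z_{\gamma_{p,q-1}}Z_{\gamma_{p+1,q}}Z_{\gamma_{p,q+1}}$ (which is a concrete witness to the linear independence the paper only asserts), and then interpret $r_k^{(4)}$ as $c_k(\widehat{\mathcal G})$ for the clique-completion $\widehat{\mathcal G}$; connectedness of $\widehat{\mathcal G}$ gives $k(\widehat{\mathcal G})>0$ by \cite{MR2980495}*{Proposition~2}. Both constructions achieve the same goal of bridging the two components $S$ and $T$ of $\Pi_0$: the paper by a vertex quotient, you by adding edges. Your parenthetical alternative via $R^{(2)}$ and vertex identification is precisely the paper's proof, so you have in fact recovered it as well. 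One small advantage of the paper's route is that it yields the exact value $1$ for the coefficient, whereas your clique $\widehat{\mathcal G}$ is not a tree when $|B_{p,q}|=4$, so you only get positivity; for the proposition at hand, of course, positivity suffices.
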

         \begin{proof}
             It is not difficult to see that the set $\{f_1,f_2,g_1,g_2,g_3,g_4,h_1\}$ is linearly independent over $\mathbb C$, therefore it suffices to show that the coefficient of $f_1$ in $A$ is nonzero. From Equation \eqref{coef of f1 in Ak}, we find that this coefficient is $(-1)^{m+n} \sum_{k\geqslant 2}  \frac{(-1)^k}{k} r_k^{(2)}$, where the sum runs up to $k=m+n$. Recall that $P_k(\Pi_0)$ is the set of all $k$-partitions of $\Pi_0$. Let $Q_k$ be the subset of $P_k(\Pi_0)$ defined by
             \begin{displaymath}
                 Q_k\coloneqq\{\mathcal{Q}=(I_1,\dots,I_k)\in P_k(\Pi_0)\colon \alpha_{p-1},\beta_{q-1}\in I_{i_1}\text{ and } \alpha_p,\beta_q\in I_{i_2}\text{ for some } 1\leqslant i_1\neq i_2\leqslant k\}.
             \end{displaymath}
             By definition, $r_k^{(2)}$ is the cardinality of $Q_k$. Notice that $r_k^{(2)}=0$ if $k>m+n-2$. Since both $\{\alpha_{p-1},\beta_{q-1}\}$ and $\{\alpha_p,\beta_q\}$ occur as pairs in any $\mathcal{Q}\in Q_k$, they can be treated as a single symbol. Let $\nu_{p-1,q-1}$ (resp. $\nu_{p,q}$) stand for the pairs $\{\alpha_{p-1},\beta_{q-1}\}$ (resp. $\{\alpha_p,\beta_q\}$). Now we consider the following set:
             \begin{displaymath}
                 G_{p,q}\coloneqq \{\alpha_1,\dots,\alpha_{p-2},\hat\alpha_{p-1},\hat\alpha_p,\dots,\alpha_m,
                 \beta_1,\dots,\beta_{q-2},\hat\beta_{q-1},\hat\beta_q,\dots,\beta_n\}\sqcup \{\nu_{p-1,q-1},\nu_{p,q}\},
             \end{displaymath}
             where hat denotes omission. Then $|G_{p,q}|=m+n-2$. We intend to construct a graph $\mathcal{G}_{p,q}$ having $G_{p,q}$ as the set of vertices. The vertices in $G_{p,q}\backslash \{\nu_{p-1,q-1},\nu_{p,q}\}$ being a subset of $\Pi_0$, are joined in accordance with the usual rule (\textit{see} \vref{graph}); $\alpha_i$ is joined to $\nu_{p-1,q-1}$ (resp. to $\nu_{p,q}$) by an edge if $(\alpha_i,\alpha_{p-1})\neq 0$ (resp. $(\alpha_i,\alpha_p)\neq 0$). Similarly, $\beta_j$ is joined to $\nu_{p-1,q-1}$ (resp. to $\nu_{p,q}$) by an edge if $(\beta_j,\beta_{q-1})\neq 0$ (resp. $(\beta_j,\beta_q)\neq 0$). As both $(\alpha_{p-1},\alpha_p)$ and $(\beta_{q-1},\beta_q)$ are nonzero, $\nu_{p-1,q-1}$ and $\nu_{p,q}$ is connected by an edge. These rules define the graph $\mathcal{G}_{p,q}$. We see that $\nu_{p-1,q-1}$ is joined to both $\alpha_{p-2}$ and $\beta_{q-2}$, while $\nu_{p,q}$ is connected to $\nu_{p-1,q-1}$, $\beta_{q+1}$ and $\alpha_{p+1}$. This means that $\mathcal{G}_{p,q}$ is a \emph{tree}.

             Evidently, $r_k^{(2)}=P_k(\mathcal{G}_{p,q})$ for $2\leqslant k\leqslant m+n-2$, and $P_1(\mathcal{G}_{p,q})=0$. This implies that
             \begin{displaymath}
                 (-1)^{m+n-2} \sum\nolimits_{k=2}^{m+n-2} \frac{(-1)^k}{k} r_k^{(2)}=(-1)^{|G_{p,q}|}\sum\nolimits_{k=2}^{|G_{p,q}|} \frac{(-1)^k}{k} P_k(\mathcal{G}_{p,q})=k(\mathcal{G}_{p,q}).
             \end{displaymath}
             Since $\mathcal{G}_{p,q}$ is a tree, by ~\cite{MR2980495}*{Corollary 1} we conclude that $k(\mathcal{G}_{p,q})=1$.
         \end{proof}
         
        Summarizing \cref{coef for sl n 1,coef for cn,coef for G3,coef for f4,coef of sl mn}, we obtain the following important Lemma.
        \begin{lemma}\label{nonzero lem atyp}
            Let \g be any one among the Lie superalgebras mentioned in \autoref{ x lam to u lam atyp}. Let $\lambda\in\h^*$ be dominant integral singly atypical of type say $\gamma$. Then the coefficient of $X^\lambda$ in the expansion of $-\log U(\lambda)$ is nonzero.
        \end{lemma}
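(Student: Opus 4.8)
The plan is to obtain \autoref{nonzero lem atyp} as an immediate consequence of the five preceding propositions. From \autoref{coef for sl n 1}, \autoref{coef for cn}, \autoref{coef for G3}, \autoref{coef for f4} and \autoref{coef of sl mn} one has, in each case, an explicit formula for the coefficient of $X^\lambda$ in $-\log U(\lambda)$; so the only thing left is to check that this element of $\mathcal F$ is nonzero. The one device I would use throughout is that every factor of the form $1+Z_{w\gamma}$ (resp.\ $2+Z_{w\gamma}$) occurring in those formulas is a unit of $\mathcal F$ with constant term $1$ (resp.\ $2$)---as is used already in their proofs, since $\W$ preserves $\Phi_1$---so that all the displayed quotients are well-defined elements of $\mathcal F$, and the constant term of such a quotient, or of a $\mathbb C$-linear combination of them, is read off by setting every $Z_\gamma$ equal to $0$.

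First I would handle the types for which $\Pi_0$ is connected, namely $\g=\mathfrak{sl}(m,1)$, $\g=\mathfrak{osp}(2,2n)$, and $\g=G(3)$ with $\lambda\neq\lambda_1,\lambda_2$. In these cases $K=k(\Pi_0)=1$ by \cite{MR2980495}*{Corollary 1}, and each of the formulas in \autoref{coef for sl n 1}, \autoref{coef for cn} and the first branch of \autoref{coef for G3} is a quotient of products of units $1+Z_{w\gamma}$, scaled by this factor $K=1$; its constant term is therefore $1\neq0$, so it is nonzero. The degenerate $\mathfrak{sl}(2,1)$ subcase of \autoref{coef for sl n 1} is covered separately by \autoref{coef sl2,1}, where the coefficient $-(1+Z_\beta)/(1+Z_{s\beta})$ is plainly nonzero. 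For the special weights $\lambda_1,\lambda_2$ of $G(3)$, the second branch of \autoref{coef for G3}, namely $M^2(2+Z_{s_1\beta})(2+Z_{s_2\beta})/\bigl((1+Z_{s_1\beta})(1+Z_{s_2\beta})\bigr)$ with $M=(1+Z_\beta)/(2+Z_\beta)$, has constant term $(\tfrac12)^2\cdot 4=1\neq 0$.

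Next I would treat $\g=F(4)$, which is the one genuinely interesting case, because the coefficient in \autoref{coef for f4} is a difference of two quotients of units and a priori could vanish. Setting every $Z_\gamma=0$, the first summand contributes $2$ and the second contributes $1$, so the constant term of the coefficient is $2-1=1\neq 0$, whence it is nonzero. The same computation, now with the factors $2+Z_{w\beta}$ and with $M=(1+Z_\beta)/(2+Z_\beta)$ in place of the plain factors, disposes of the weights $\lambda_1,\lambda_2$ of $F(4)$: there the two summands have constant terms $2\cdot(\tfrac12)^3\cdot 8=2$ and $(\tfrac12)^2\cdot 4=1$, so again the constant term is $1\neq 0$. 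Finally, for $\g=\mathfrak{sl}(m+1,n+1)$ the assertion is literally the content of \autoref{coef of sl mn}, so nothing further is required.

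The only place where care is needed is thus the $F(4)$ case and its $\lambda_1,\lambda_2$ variant: unlike all the other types, the coefficient there is not manifestly nonzero, so one must be sure the two fractions being subtracted are correctly aligned before reading off their constant terms. Once that is done the conclusion is immediate, since the two pieces have distinct constant terms; everything else reduces to bookkeeping and direct citation of the five propositions.
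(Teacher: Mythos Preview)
Your proposal is correct and follows exactly the paper's approach: the paper's proof consists of the single sentence ``Summarizing \cref{coef for sl n 1,coef for cn,coef for G3,coef for f4,coef of sl mn}, we obtain the following important Lemma,'' leaving the nonvanishing of the explicit formulas implicit. You have simply spelled out the verification (in particular the constant-term check for the $F(4)$ difference, which is the only case not manifestly nonzero), so your write-up is more detailed than the paper's but otherwise identical in substance.
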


        The next theorem will be used in the proof of the main result of this section (cf. \autoref{permfac}).
        
        \begin{theorem}\label{thm: perm of ulmbda atyp}
            Let \g be as in \autoref{ x lam to u lam atyp}. Let $\nu_1,\dots,\nu_r;\mu_1,\dots,\mu_s$ be dominant integral singly atypical weights of \g of atypicality type $\gamma$.  Suppose we are given the equality
            \begin{gather}\label{eq of pdt of u lam an u mu}
                U(\nu_1)\cdots U(\nu_r)=U(\mu_1)\cdots U(\mu_s).
            \end{gather}
            Then $r=s$, and there is a permutation \(\sigma\) of \(\{1,\dots,r\}\) such that $U(\nu_k)=U(\mu_{\sigma(k)})$.
        \end{theorem}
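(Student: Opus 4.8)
The plan is to transport the proof of \autoref{permfac} to the present, more delicate setting, working now in the formal power series ring $\mathcal F[[X_\alpha\colon\alpha\in\Pi_0]]$ over the integral domain $\mathcal F$ and comparing, for a minimally chosen weight, the monomial $X^\lambda=\prod_{\alpha\in\Pi_0}X_\alpha^{\langle\lambda+\rho,\alpha\rangle}$. First I would apply $-\log$ to \eqref{eq of pdt of u lam an u mu}; this is permissible because each $U(\lambda)$ has constant term $1$ (the $w=1$ summand contributes $\tfrac1{1+Z_\gamma}$, or $\tfrac{2+Z_\gamma}{2(1+Z_\gamma)}$ for the exceptional weights $\lambda_1,\lambda_2$, each of which has constant term $1$ in $\mathcal F$), so one obtains
\[
\sum_{k=1}^{r}-\log U(\nu_k)=\sum_{l=1}^{s}-\log U(\mu_l).
\]

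The heart of the matter is the following observation: for a dominant integral singly atypical weight $\lambda$ of type $\gamma$, the monomial $X^\lambda$ is the \emph{unique} monomial of least total degree in the $X_\alpha$'s among those with support equal to all of $\Pi_0$ occurring in $-\log U(\lambda)$, and by \autoref{nonzero lem atyp} it occurs there with nonzero $\mathcal F$-coefficient. (Since $\langle\lambda+\rho,\alpha\rangle\geqslant1$ for every $\alpha\in\Pi_0$, the monomial $X^\lambda$ itself does have full support.) To see this, use the factorization $U(\lambda)=\tfrac1{1+Z_\gamma}(1-\xi)$ of \eqref{eq 1-xi in atyp} (and the analogous one for $\lambda_1,\lambda_2$): since $\xi$ has positive $X$-degree, $-\log U(\lambda)$ is the sum of a term involving no $X_\alpha$ and of $\sum_{j\geqslant1}\xi^j/j$, and every monomial of $\xi^j$ is, up to an $\mathcal F$-coefficient, a product $\prod_{i=1}^{j}X(\lambda,w_i)$ with all $w_i\neq1$, in which, by \autoref{keylem}, Part \ref{c alphgreat a alph}, the exponent of each $X_\alpha$ in its support is at least $\langle\lambda+\rho,\alpha\rangle$. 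Hence every full-support monomial of $-\log U(\lambda)$ has $X$-degree at least $\sum_{\alpha\in\Pi_0}\langle\lambda+\rho,\alpha\rangle=\deg X^\lambda$, with equality only for $X^\lambda$.

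Next I would pick a weight $\lambda_0$ among $\nu_1,\dots,\nu_r,\mu_1,\dots,\mu_s$ with $\deg X^{\lambda_0}$ as small as possible; interchanging the two sides of the displayed equality if $\lambda_0$ occurs among the $\mu_j$'s, I may assume $\lambda_0=\nu_{k_0}$, so that $\deg X^{\nu_{k_0}}\leqslant\deg X^{\nu_k}$ and $\deg X^{\nu_{k_0}}\leqslant\deg X^{\mu_l}$ for all $k,l$. Since $X^{\nu_{k_0}}$ has support exactly $\Pi_0$, the previous paragraph together with the minimality of $\deg X^{\nu_{k_0}}$ shows that $X^{\nu_{k_0}}$ does not occur in $-\log U(\nu_k)$ when $\nu_k\neq\nu_{k_0}$, while it occurs with the same nonzero coefficient when $\nu_k=\nu_{k_0}$; as $\mathcal F$ has characteristic $0$, the coefficient of $X^{\nu_{k_0}}$ on the left-hand side equals the multiplicity of $\nu_{k_0}$ among $\nu_1,\dots,\nu_r$ times that nonzero element, hence is nonzero. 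Consequently $X^{\nu_{k_0}}$ occurs in $-\log U(\mu_{l_0})$ for some $l_0$; since $X^{\nu_{k_0}}$ has full support, the second paragraph gives $\deg X^{\nu_{k_0}}\geqslant\deg X^{\mu_{l_0}}$, and minimality gives the reverse inequality, so $\deg X^{\nu_{k_0}}=\deg X^{\mu_{l_0}}$ and therefore $X^{\nu_{k_0}}=X^{\mu_{l_0}}$ by the uniqueness just established. By \autoref{ x lam to u lam atyp} this yields $U(\nu_{k_0})=U(\mu_{l_0})$ (in fact $\nu_{k_0}=\mu_{l_0}$).

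Finally, $\mathcal F[[X_\alpha\colon\alpha\in\Pi_0]]$ is a power series ring in finitely many variables over $\mathbb C$, hence an integral domain, so I may cancel $U(\nu_{k_0})$ and $U(\mu_{l_0})$ in \eqref{eq of pdt of u lam an u mu} and induct on $r+s$, the remaining weights being again dominant integral singly atypical of type $\gamma$. A nonempty product of such $U$'s can never equal $1$: applying $-\log$ and rerunning the argument above makes the coefficient at the minimal monomial nonzero. Hence one side of the equality is empty exactly when the other is, which forces $r=s$, and composing the matchings $\nu_{k_0}\mapsto\mu_{l_0}$ obtained at the successive stages yields the desired permutation $\sigma$ with $U(\nu_k)=U(\mu_{\sigma(k)})$. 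The step I expect to be most delicate is the uniqueness-of-least-degree-full-support-monomial claim of the second paragraph, and in particular the non-vanishing of the coefficient of $X^\lambda$ there — precisely what \cref{coef for sl n 1,coef for cn,coef for G3,coef for f4,coef of sl mn}, summarized in \autoref{nonzero lem atyp}, are needed to guarantee.
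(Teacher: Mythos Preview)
Your proof is correct and follows essentially the same strategy as the paper: take $-\log$ of \eqref{eq of pdt of u lam an u mu}, restrict attention to monomials with full support $\Pi_0$, use \autoref{keylem}\ref{c alphgreat a alph} to identify $X^\lambda$ as the unique least-degree such monomial in $-\log U(\lambda)$ with nonzero coefficient (via \autoref{nonzero lem atyp}), match minimal monomials on both sides, invoke \autoref{ x lam to u lam atyp}, cancel, and induct. Your treatment is slightly more uniform than the paper's in that you absorb the $\log(1+Z_\gamma)$ (and $\log(2+Z_\gamma)$) terms into the ``no $X_\alpha$'' part from the outset rather than isolating them as $(s-r)\log(1+Z_\gamma)$ after the fact, but the argument is the same.
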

        \begin{proof}
             First we assume that none of these weights are either $\lambda_1$ or $\lambda_2$ in case when $\g=G(3),F(4)$. As in Equation \eqref{eq 1-xi in atyp}, we express $U(\nu_i)$ in the form $U(\nu_i)=\frac{1}{1+Z_{\gamma}}(1-\xi_i)$, and similarly $U(\mu_j)=\frac{1}{1+Z_{\gamma}}(1-\zeta_j)$. Taking negative logarithm on both sides of Equation \eqref{eq of pdt of u lam an u mu} yields:
            \begin{gather}
                \sum_{i=1}^r -\log (1-\xi_i)= \sum_{j=1}^s -\log (1-\zeta_j) +(s-r)\log (1+Z_{\gamma}).\label{-log of pdt}
                \intertext{After applying $\Theta_{\Pi_0}$ (\textit{see} \vpageref{theta c}) on both sides we obtain (cf. \autoref{lowestdeg})}
                \sum_{i=1}^r (L\cdot X^{\nu_i} +\vartheta_i)= \sum_{j=1}^s (L\cdot X^{\mu_j} +\varphi_j),\notag
            \end{gather}
            where $\vartheta_i$ (resp. $\varphi_j$) is the sum of monomials of total degree bigger than $\deg X^{\nu_i}$ (resp. $X^{\mu_j}$), and $L$ is the coefficient of $X^{\nu_i}$ and $X^{\mu_j}$ for all $1\leqslant i\leqslant r,\ 1\leqslant j\leqslant s$. Since all the weights are of same atypicality type, $L$ does not depend on the weights, and it is nonzero by \autoref{nonzero lem atyp}.

            Among all the $X^{\nu_i}$'s, we choose one having lowest possible degree. Without any loss of generality, say this element is $X^{\nu_1}$; hence, this monomial must appear in the right hand side of the above equation. By minimality of degree, we must have $X^{\nu_1}=X^{\mu_k}$, for some $1\leqslant k\leqslant s$. It follows that $U(\nu_1)=U(\mu_k)$ by \autoref{ x lam to u lam atyp}.
            
            To show $r=s$, we assume on contrary that $r>s$. Proceeding via induction, we see that all the factors on the right side get canceled with some in the left side, forcing $r=s$, as a product of $U(\nu_i)$'s is not identically 1.

            Now for $\g=G(3)\text{ and }F(4)$, we assume that $\lambda_1$ and $\lambda_2$ appear collectively $m$ times on the left hand side, and $n$ times on the right hand side of Equation \eqref{eq of pdt of u lam an u mu}. In this case, Equation \eqref{-log of pdt} will take the following form (cf. Equation \ref{eq 1-xi lam1 lam2}):
            \begin{displaymath} \tag{\ref*{-log of pdt}a}
                \sum_{i=1}^r -\log (1-\xi_i)= \sum_{j=1}^s -\log (1-\zeta_j) +(s-r)\log (2+2Z_{\gamma}) +(m-n)\log(2+Z_\gamma).
            \end{displaymath}
            The rest of the proof now goes over verbatim.
        \end{proof}
        
        We are now ready to prove the main result of this section.
        
        \begin{theorem}\label{Thm:atyp}
            Let \g be any Lie superalgebra among $\mathfrak{sl}(m+1,n+1),\ \mathfrak{osp}(2,2n),\ G(3)$, or $F(4)$. Suppose we are given the following isomorphism of \g-modules \[V(\nu_1)\otimes\dots\otimes V(\nu_r)\cong V(\mu_1)\otimes\dots\otimes V(\mu_s),\] where each $\nu_i$'s and $\mu_j$'s are dominant integral singly atypical of type $\gamma$. Then $r=s$, and there is a permutation \(\sigma\) of \(\{1,\dots,r\}\) such that \(V(\nu_k)\cong V(\mu_{\sigma(k)})\) for all $k$.
        \end{theorem}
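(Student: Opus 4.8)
The plan is to turn the module isomorphism into an identity of products of normalized Weyl numerators and then to apply \autoref{thm: perm of ulmbda atyp} together with \autoref{ x lam to u lam atyp}. First I would take formal characters of both sides. Since a finite dimensional irreducible \g-module is determined by its character, the isomorphism yields $\prod_{i=1}^{r}\ch V(\nu_i)=\prod_{j=1}^{s}\ch V(\mu_j)$. The maximal weight occurring on the left hand side is $\sum_{i}\nu_i$ and the one on the right hand side is $\sum_{j}\mu_j$, so these coincide; write $\kappa$ for the common value. Multiplying the character identity by $e^{-\kappa}$, redistributing the factors $e^{-\nu_i}$ and $e^{-\mu_j}$, and using $\ch V(\lambda)=e^{\lambda}\chi_\lambda$ together with Equation \eqref{eqnnc} (which, as recalled in \autoref{sec atyp}, continues to hold for singly atypical $\lambda$, with $\chi_\lambda=D\cdot U(\lambda)$ and $D=e^{\rho}D_1/D_2$), I obtain
\begin{displaymath}
    D^{\,r-s}\,\prod_{i=1}^{r}U(\nu_i)=\prod_{j=1}^{s}U(\mu_j).
\end{displaymath}

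Next I would show that $r=s$. Assume, for contradiction, that $r>s$. Multiplying both sides by $(1+e^{-\gamma})^{r}$ makes every factor a power series with constant term $1$, because $U(\lambda)=(1+e^{-\gamma})^{-1}(1-\xi)$ by \eqref{eq 1-xi in atyp}; one then applies $-\log$ and the operator $\Theta_{\Pi_0}$. By \autoref{nonzero lem atyp} the monomial $X^{\nu_i}$ occurs in $\Theta_{\Pi_0}(-\log U(\nu_i))$ with a nonzero coefficient which, since all weights have the same atypicality type $\gamma$, is independent of the weight, and every other monomial there has strictly larger $X$-degree; after taking $-\log$ the surplus factor $D^{\,r-s}$ contributes a multiple of $\Theta_{\Pi_0}(\log D)$, which is controlled exactly as the corresponding term is in the proof of \autoref{Thm:tnsrpdt}. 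Choosing $X^{\nu_1}$ of least $X$-degree among all the $X^{\nu_i}$ and $X^{\mu_j}$, minimality forces $X^{\nu_1}=X^{\mu_k}$ for some $k$, hence $U(\nu_1)=U(\mu_k)$ by \autoref{ x lam to u lam atyp}; cancelling this pair of factors and iterating, the surviving identity reduces to $D^{\,r-s}\prod(\text{remaining }U(\nu_i))=1$ with $r-s\geqslant 1$ factors $U(\nu_i)$, which is impossible (a product of $D^{\,r-s}$ with $r-s\geqslant1$ nonconstant factors cannot be the constant power series $1$, again by the $\Theta_{\Pi_0}$-analysis). Hence $r=s$.

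Once $r=s$, the factor $D^{\,r-s}$ drops out and the identity becomes $\prod_{i}U(\nu_i)=\prod_{j}U(\mu_j)$. Now \autoref{thm: perm of ulmbda atyp} provides a permutation $\sigma$ of $\{1,\dots,r\}$ with $U(\nu_k)=U(\mu_{\sigma(k)})$ for all $k$ (the special weights $\lambda_1,\lambda_2$ of $G(3)$ and $F(4)$ being covered by the appropriate clauses of that theorem and of \autoref{ x lam to u lam atyp}), and the equivalence of (b) and (c) in \autoref{ x lam to u lam atyp} upgrades each equality $U(\nu_k)=U(\mu_{\sigma(k)})$ to $\nu_k=\mu_{\sigma(k)}$, whence $V(\nu_k)\cong V(\mu_{\sigma(k)})$, as desired.

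The step I expect to be the main obstacle is the proof that $r=s$: one must be sure that the surplus factor $D^{\,r-s}$ — the only place where the odd-root variables $Z_\gamma$ enter the computation — cannot produce, after $-\log$ and $\Theta_{\Pi_0}$, a monomial whose $X$-degree is as small as that of a minimal $X^{\nu_i}$ and whose coefficient conspires with the weight-independent nonzero coefficient furnished by \autoref{nonzero lem atyp}. Settling this relies on the explicit description of which roots have full $\Pi_0$-support together with the fact that the relevant coefficients are sufficiently independent in $\mathbb{C}[[Z_\gamma:\gamma\in\Phi_1]]$; the non-triviality of the modules is used here as well.
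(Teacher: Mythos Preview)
Your proposal follows essentially the same route as the paper: reduce the module isomorphism to an identity of characters, pass to normalized Weyl numerators to obtain $U(\nu_1)\cdots U(\nu_r)\,D^{\,r-s}=U(\mu_1)\cdots U(\mu_s)$, then use \autoref{thm: perm of ulmbda atyp} and \autoref{ x lam to u lam atyp} to produce the permutation and upgrade to $\nu_k=\mu_{\sigma(k)}$. The paper is in fact terser than you are on the point you single out as the main obstacle: it simply invokes \autoref{thm: perm of ulmbda atyp} on the displayed identity (with the $D^{\,r-s}$ still present) and reads off both $r=s$ and the permutation in one stroke, implicitly relying on the argument of that theorem going through unchanged. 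Your separate treatment of $r=s$ via the minimal-degree cancellation-and-iteration argument, together with your worry about whether $\Theta_{\Pi_0}(\log D)$ could interfere, is a more cautious version of the same step rather than a different approach; note that your appeal to ``controlled exactly as in the proof of \autoref{Thm:tnsrpdt}'' is somewhat circular, since that proof also dispatches the $D^{\,r-s}$ term by reference rather than by an explicit computation. In short, your plan is correct and matches the paper's, and your closing caveat accurately flags a point that the paper itself passes over quickly.
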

        \begin{proof}
            After taking characters on both sides of the above isomorphism and proceeding as in the proof of \autoref{Thm:tnsrpdt}, we get that
            \begin{displaymath}
                U(\nu_1)\cdots U(\nu_r)D^{r-s}=U(\mu_1)\cdots U(\mu_s),\quad\text{where $D=\tfrac{\prod_{\alpha\in\Phi^+_1}(1+e^{-\alpha})}{\prod_{\alpha\in\Phi^+_0}(1-e^{-\alpha})}$}.
            \end{displaymath}
            By \autoref{thm: perm of ulmbda atyp}, we conclude that $r=s$ and $U(\nu_k)=U(\mu_{\sigma(k)})$ for some permutation \(\sigma\) of \(\{1,\dots,r\}\) and $1\leqslant k\leqslant r$. This implies by \autoref{ x lam to u lam atyp} that $\nu_k=\mu_{\sigma(k)}$, whence it follows that $V(\nu_k)\cong V(\mu_{\sigma(k)})$.
        \end{proof}

	\bibliography{References} 
\end{document}